%% %% %-----------------------------------------------------------------
% AMS-LaTeX Paper ************************************************
% **** -----------------------------------------------------------
\documentclass[twoside,centertags                  %,draft
]{amsart}

\usepackage{tikz}
\usetikzlibrary{matrix,arrows,calc,decorations.pathmorphing,fit,shapes.geometric,decorations.pathreplacing}

\usepackage[cmtip,color,all,curve,
%dvips%%
%pdftex%%
arc,poly
]{xy}

%\entrymodifiers={+!!<0pt,\fontdimen22\textfont2>}

%\usepackage{amsmath}
%\usepackage{bbm}
%\usepackage{amsfonts}
\usepackage{amssymb}
\usepackage{mathrsfs}
%\usepackage{wasysym}
%\usepackage[show]{ed}

%\usepackage{float}

%\usepackage[usenames,dvipsnames]{pstricks}
%\usepackage{epsfig}
%\usepackage{pst-grad} % For gradients
%\usepackage{pst-plot} % For axes

% ----------------------------------------------------------------
\vfuzz2pt % Don't report over-full v-boxes if over-edge is small
\hfuzz2pt % Don't report over-full h-boxes if over-edge is small
\newtheorem{thm}[equation]{Theorem}
\newtheorem{cor}[equation]{Corollary}
\newtheorem{lem}[equation]{Lemma}
\newtheorem{prop}[equation]{Proposition}
\theoremstyle{definition}
\newtheorem{defn}[equation]{Definition}
\theoremstyle{remark}
\newtheorem{rem}[equation]{Remark}
\newtheorem{exm}[equation]{Example}

\newcommand{\abs}[1]{|#1|}

\def\r{\rightarrow} % flecha -->
 % flecha <--
 % flecha ==>

\newcommand{\id}[1]{\mathrm{id}_{#1}}

\def\hom{\operatorname{Hom}}

%\def\star{{\operatorname{St}}}

%----------------------------------------------------------------

% ----------------------------------------------------------------

\def\st{\stackrel} % abreviatura de \stackrel

 % abreviatura de \underline

 % abreviatura de \underline

\def\To{\longrightarrow}

\def\colim{\mathop{\operatorname{colim}}}

\newcommand{\norm}[1]{\parallel\!#1\!\parallel}

\numberwithin{equation}{section}

\newdir{ >}{{}*!/-3pt/@{>}}
\newdir{> }{{}*!/10pt/@{>}}
\newdir{>> }{{}*!/10pt/@{>>}}

% ----------------------------------------------------------------

\begin{document}

\title[Cylinders for non-symmetric  DG-operads]
{Cylinders for non-symmetric  DG-operads via homological perturbation theory}%
\author{Fernando Muro}%
\address{Universidad de Sevilla,
Facultad de Matem\'aticas,
Departamento de \'Algebra,
Avda. Reina Mercedes s/n,
41012 Sevilla, Spain}
\email{fmuro@us.es}
\urladdr{http://personal.us.es/fmuro}

%\thanks{The author was partially supported
%by the Andalusian Ministry of Economy, Innovation and Science under the grant FQM-5713, by the Spanish Ministry of Education and
%Science under the MEC-FEDER grant  MTM2010-15831, and by the Government of Catalonia under the grant SGR-119-2009.}
\subjclass[2010]{18D50, 18G55}
\keywords{Operad, cylinder, homotopy.}

\begin{abstract}
We construct small cylinders for cellular non-symmetric DG-oper\-ads over an arbitrary commutative ring by using the basic perturbation lemma from homological algebra. We show that our construction, applied to the $A$-infinity operad, yields the operad parametrizing $A$-infinity maps whose linear part is the identity. We also compute some other examples with non-trivial operations in arities $1$ and $0$. %In the relatively linear case, we obtain complete explicit computations of homotopical constructions related to cylinders. %We also prove that, with respect to this cylinder, vertical composition of homotopies is composition of $A$-infinity maps. %We also explicitly construct the cylinder of other interesting examples of cofibrant DG-operads.
\end{abstract}

\maketitle
%\tableofcontents

% ---------------------------------------------------------------------------------

\section*{Introduction}

Cylinders are basic tools to do homotopy theory in any context. The existence of cylinders is guaranteed by the axioms of model categories. 
%A cylinder of an object $X$ is another object $IX$ equipped with a diagram 
%	$$\xymatrix{X
%		\ar@<.5ex>[r]^-{i_0}
%		\ar@<-.5ex>[r]_-{i_1}
%		&
%		IX\ar[r]^-{p}
%		&
%		X
%	}$$
%satisfying $pi_{0}=pi_{1}=1_{X}$, 
%where $p$ is a weak equivalence and $(i_{0},i_{1})\colon X\amalg X\r IX$ is a cofibration. 
In cofibrantly generated model categories, cylinders can be constructed out of generating cofibrations, but they are huge, not useful for explicit computations. In many specific model categories, there are nice and small cylinders for cofibrant objects, e.g.~topological spaces (the product with the interval), chain complexes (well known), differential graded (DG) algebras, and connected commutative DG-algebras in characteristic $0$, see \cite[\S1]{ah}.

Any DG-algebra has a cofibrant resolution of the form $(T(V),d+\partial)$, where $(T(V),d)$ is the free DG-algebra on a chain complex $V$ and $\partial$ is a perturbation of the differential $d$. The cylinder  of such a DG-algebra is $(T(IV),d+\partial_{I})$, where $(T(IV),d)$ is the free DG-algebra on the cylinder $IV$ of the chain complex $V$ and $\partial_{I}$ is a perturbation defined from $\partial$ in a straightforward way, using a chain homotopy compatible with the associative algebra structure. The commutative case starts similarly, with a perturbed free commutative DG-algebra, but quickly diverges. The differential is defined in terms of locally nilpotent derivations and formal exponentials, where factorial denominators appear.

DG-operads are closely related to DG-algebras, the arity $1$ part $\mathcal O(1)$ of a DG-operad $\mathcal O$ is a DG-algebra. However, one of the DG-operad laws contains a switch,
\begin{align*}
(x\circ_iy)\circ_jz&=(-1)^{\abs{y}\abs{z}}(x\circ_jz)\circ_{i+n-1}y,\quad j<i,\quad z\in\mathcal O(n).
\end{align*}
Therefore, in any possible construction of cylinders for DG-operads we can expect some of the complications of the commutative DG-algebra case.
We  work with non-symmetric DG-operads, which avoids further switches associated to symmetric group actions. 
Unlike in the symmetric case, the category of non-symmetric DG-operads is endowed with a model structure over any commutative ground ring \cite{htnso, huainfa}. %The non-symmetric case is suitable in forthcoming applications to the computation of moduli spaces of $A$-infinity algebra structures. 

Any DG-operad has a cofibrant resolution of the form $(\mathcal F(V),d+\partial)$, where $(\mathcal F(V),d)$ is the free DG-operad on a sequence of chain complexes $V=\{V(n)\}_{n\geq 0}$ and $\partial$ is a perturbation of the differential $d$. We construct a cylinder for such a DG-operad of the form $(\mathcal F(IV),d+\partial_{I})$, where  $(\mathcal F(IV),d)$ is the free DG-operad on the sequence of chain cylinders $IV=\{IV(n)\}_{n\geq 0}$ and $\partial_{I}$ is a perturbation defined from $\partial$ by using tools from homological perturbation theory. The definition of $\partial_{I}$ is recursive. We compute some examples in Sections \ref{examples} and \ref{linear}. The most remarkable one is the  $A$-infinity operad $\mathcal A_{\infty}$, which has the previous form. Maps from $\mathcal A_{\infty}$ to an endomorphism operad correspond to $A$-infinity algebra structures, and homotopies with respect to our cylinder correspond to $A$-infinity morphisms whose linear part is the identity. 

Fresse \cite{occcomaoo} defined cylinders for symmetric DG-operads arising as the cobar construction of a coaugmented connected DG-cooperad. The $A$-infinity operad $\mathcal A_{\infty}$ arises in this way, it is the cobar construction of the Koszul dual cooperad of the associative operad. Fresse's cylinder of $\mathcal A_{\infty}$ coincides with ours, modulo symmetrization. His formulas are closed, not recursive, and work for symmetric operads. Our construction does not rule out operads with non-trivial operations in arities $0$ and $1$, such as the unital $A$-infinity operad \cite{ckdt,huainfa,uass} or the DG-operad for homotopy associative algebras with derivation \cite{oaad}, considered in Example \ref{conder}, and works straightaway in the relative case. We actually describe the cylinder of a DG-operad concentrated in arities $0$ and $1$, observing that our construction generalizes the classical cylinder of DG-algebras. In the final section, we consider a family of DG-operads, called linear, where our formulas greatly simplify. This family contains interesting examples, mainly in the relative case.

\subsection*{Acknowledgments}

The author was partially supported
by the Andalusian Ministry of Economy, Innovation and Science under the grant FQM-5713 and by the Spanish Ministry of Economy under the MEC-FEDER grant MTM2013-42178-P.

\section{Cylinders}

We work over an arbitrary commutative ground ring $\Bbbk$. The symmetry constraint in the monoidal categories of ($\mathbb Z$-)graded ($\Bbbk$-)modules and chain complexes uses the Koszul sign rule. Differentials have degree $-1$. As a graded module, the suspension $\Sigma X$ of a chain complex $X$ is $(\Sigma X)_n=X_{n-1}$ with differential $d_{\Sigma X}=-d_X$. We denote by $\sigma\colon X\r\Sigma X$ the degree $+1$ isomorphism that is the identity degreewise, which satisfies $d_{\Sigma X}\sigma =-\sigma d_X$. Therefore, given $x\in X_{n-1}$ we often write $\sigma(x)$ for $x$ itself regarded as an element in $(\Sigma X)_n$.

\begin{defn}\label{primen}
	A \emph{strong deformation retraction}, or simply an \emph{SDR}, consists of two chain complexes $X$ and $Y$ and a diagram 
	$$\xymatrix@C=40pt{X\ar@<.5ex>[r]^-{i}&*+[l]{Y}\ar@<.5ex>[l]^-p\ar@(ru,rd)^ h}$$
	where $i$ and $p$ are chain maps, $h$ is a chain homotopy from $ip$ to the identity in $Y$, i.e.~a degree $+1$ map satisfying
	$$ip-1_Y=dh+hd,$$
	and the following equations are satisfied,
	\begin{align*}
	pi&=1_X,&
	ph&=0,&
	hi&=0,&
	h^2&=0.
	\end{align*}
	
	A \emph{strong pseudo-cylinder} of a chain complex $X$ is a diagram
	$$\xymatrix{X
		\ar@<.5ex>[r]^-{i_0}
		\ar@<-.5ex>[r]_-{i_1}
		&
		C\ar[r]^-{p}\ar@(ul,ur)^ h
		&
		X
	}$$
	where $C$ is a chain complex, $i_0$, $i_1$ and $p$ are chain maps, $h$ is a chain homotopy such that
	$$\xymatrix@C=40pt{X\ar@<.5ex>[r]^-{i_0}&*+[l]{C}\ar@<.5ex>[l]^-p\ar@(ru,rd)^ h}$$
	is an SDR, and $pi_1=1_X$. We call it a \emph{strong cylinder} if $(i_0,i_1)\colon X\oplus X\r C$ is a cofibration in the projective model structure on chain complexes \cite[Theorem 2.3.11]{hmc}, and hence the underlying diagram obtained by forgetting $h$ is a cylinder object factorization in the model theoretic sense.
	
	The \emph{canonical strong pseudo-cylinder} of a chain complex $X$ is given by 
	$$\xymatrix{X
		\ar@<.5ex>[r]^-{i_0}
		\ar@<-.5ex>[r]_-{i_1}
		&
		IX\ar[r]^-{p}\ar@(ul,ur)^{h_I}
		&
		X
	}$$
	where $IX$, 
	as a graded module, is
	$$IX=X\oplus \Sigma X\oplus X.$$
	The differential of $IX$ is 
	$$d_{IX}=\left(\begin{smallmatrix}
	d_X&1&0\\0&-d_X&0\\0&-1&d_X
	\end{smallmatrix}\right)
	$$
	and the structure maps are 
	\begin{align*}
	i_0&=\left(\begin{smallmatrix}1\\0\\0\end{smallmatrix}
	\right),& 
	i_1&=\left(\begin{smallmatrix}0\\0\\1\end{smallmatrix}
	\right),& 
	p&=\left(\begin{smallmatrix}1&0&1\end{smallmatrix}
	\right),&
	h_I&=\left(\begin{smallmatrix}
	0&0&0\\0&0&\sigma\\0&0&0
	\end{smallmatrix}\right). 
	\end{align*}
\end{defn}

The canonical strong pseudo-cylinder is a strong cylinder if and only if $X$ is a cofibrant chain complex. 

\begin{defn}
In this paper, all operads are non-symmetric. Hence, a graded operad or DG-operad $\mathcal O$ is a sequence of objects $\{\mathcal O(n)\}_{n\geq 0}$ equipped with structure maps
\begin{align*}
	\mathcal O(n)\otimes\mathcal O(p_1)\otimes\cdots \otimes\mathcal O(p_n)&\To \mathcal O(p_1+\cdots +p_n),\qquad n\geq 1,\quad p_1,\dots,p_n\geq 0,\\
	x_0\otimes x_1\otimes\cdots\otimes x_n&\;\mapsto\;x_0(x_1,\dots, x_n),
\end{align*}
satisfying 
\begin{align}\label{oprel}
&x_0(x_1,\dots,x_n)(y_1,\dots, y_{p_1+\cdots+p_n})=\\
\nonumber&\qquad\qquad\quad(-1)^{\epsilon}x_0(x_1(y_1,\dots,y_{p_1}),\dots,x_n(y_{p_1+\cdots+p_{n-1}+1},\dots,y_{p_1+\cdots+p_n})),
\end{align}
where the sign is simply determined by the Koszul rule
\begin{align*}
	\epsilon&=\sum_{i=2}^{n} \sum_{j=1}^{\sum\limits_{k=1}^{i-1}p_k}\abs{x_{i}}\abs{y_{j}},
\end{align*}
and an identity element $\id{}=\id{\mathcal O}\in\mathcal O(1)$ satisfying 
\begin{align*}
	\id{}(x)&=x=x(\id{},\dots,\id{}).
\end{align*}
We call $\mathcal O(n)$ the \emph{arity $n$} part of $\mathcal O$. In the case of DG-operads, the structure maps being chain maps translates in the \emph{operadic Leibniz rule},
\begin{align*}
	\,d(x_0(x_1,\dots,x_n))&=d(x_0)(x_1,\dots,x_n)+\sum_{i=1}^n(-1)^{\sum\limits_{j=0}^{i-1}\abs{x_j}} \!\!\!x_0(\dots,x_{i-1},d(x_i),x_{i+1},\dots),
\end{align*}
which implies $d(\id{})=0$. In $x_0(x_1,\dots, x_n)$ we usually omit those $x_i$, $1\leq i\leq n$, which are $x_i=\id{}$. Operads can also be definied in terms of \emph{operadic compositions},
\[\circ_i\colon \mathcal O(p)\otimes\mathcal O(q)\To\mathcal O(p+q-1),\quad 1\leq i\leq p,\;\; q\geq 0, \qquad 
x\circ_iy=x(\st{i-1}\dots,y,\st{p-i}\dots).
\]
In this case, the laws are
\begin{align}
\nonumber x\circ_i(y\circ_{j}z)&=(x\circ_iy)\circ_{i+j-1}z;\\
\label{operadlaw}(x\circ_iy)\circ_jz&=(-1)^{\abs{y}\abs{z}}(x\circ_jz)\circ_{i+n-1}y,\quad j<i,\quad z\in\mathcal O(n);\\
\nonumber \id{}\circ_1x&=x=x\circ_i\id{};
\end{align}
and, for DG-operads, the operadic Leibniz rule is equivalent to
\begin{align}\label{olr}
	d(x\circ_iy)&=d(x)\circ_iy+(-1)^{\abs{x}}x\circ_id(y).
\end{align}
\end{defn}

\begin{defn}\label{cilindrodgop}
	A \emph{strong pseudo-cylinder} of a DG-operad $\mathcal O$ is a sequence of strong pseudo-cylinders of chain complexes 
	$$\xymatrix{\mathcal O
		\ar@<.5ex>[r]^-{i_0}
		\ar@<-.5ex>[r]_-{i_1}
		&
		\mathcal P\ar[r]^-{p}\ar@(ul,ur)^ h
		&
		\mathcal O
	}$$
	such that $\mathcal P$ is a DG-operad and $i_0$, $i_1$ and $p$ are DG-operad maps. 
	We call it a \emph{strong cylinder} if $(i_0,i_1)\colon\mathcal O\amalg\mathcal O\r \mathcal P$ is a cofibration in the model structure on DG-operads transferred from the projective model structure on chain complexes, see \cite[Theorem 1.1]{htnso} or \cite[Proposition 1.8]{huainfa}. Abusing terminology, we sometimes say that $\mathcal P$ is a strong (pseudo-)cylinder of $\mathcal O$. We sometimes write $h=h_{\mathcal O}$ in order to avoid ambiguity.
\end{defn}

No compatibility condition with the DG-operad structure is required for the chain homotopy $h$. 
Many explicit constructions below will however satisfy $h(\id{})=0$. There seems to be no canonical strong pseudo-cylinders for arbitrary DG-operads. The aim of this paper is to construct nice strong pseudo-cylinders for a big class of quasi-free DG-operads, i.e.~DG-operads with free underlying  graded operad. Nice means that they are strong cylinders under some extra assumptions, like canonical strong pseudo-cylinders of chain complexes. As a toy example, we will start with honestly free DG-operads.

We consider planted planar trees with leaves, that we simply call \emph{trees}, and endow the set of inner vertices with the path order. Two vertices satisfy $v\leq w$ if, when we draw the paths from the root to $v$ and $w$, the path to $v$ bifurcates to the left or is contained in the path to $w$, e.g.~in the following tree the path order is indicated by the subscripts
\begin{equation}\label{tree}
\begin{array}{c}
\begin{tikzpicture}[level distance=7mm, sibling distance=11mm] 
\tikzstyle{every node}=[execute at begin node=$\scriptstyle,execute at end node=$]
\tikzstyle{level 3}=[sibling distance=4mm]
\tikzstyle{level 4}=[sibling distance=2mm]
\node {} [grow'=up]
	child{[fill] circle (2pt)
		child{[fill] circle (2pt)
			child{}
			child{[fill] circle (2pt) node [above] {v_3}}
			child{[fill] circle (2pt)
				child{}
				child{}
				child{}
				node [below right] {v_4}}
			node [below left] {v_2}}
		child{[fill] circle (2pt)
			child{}
			node [below right] {v_5}}
		node [below left] {v_1}};
\end{tikzpicture}
\end{array}
\end{equation}
see \cite[\S3]{htnso} for formal definitions. 
The \emph{arity} of a vertex $v$ is the number $\widetilde v$ of edges adjacent from above, e.g.~in the previous tree $\widetilde{v}_1=2$, $\widetilde{v}_2=3$, $\widetilde{v}_3=0$, $\widetilde{v}_4=3$, and $\widetilde{v}_5=1$. We just depict \emph{inner vertices}, i.e.~we do not draw the top vertices of the leaves or the bottom vertex of the root. An \emph{inner edge} is an edge which is neither a leaf nor the root, i.e.~such that the two adjacent vertices are inner vertices.

Given a sequence of graded modules or chain complexes $V=\{V(n)\}_{n\geq 0}$ and a tree $T$, we denote
\[V(T)=\bigotimes_{v} V(\tilde v).\]
This tensor product is indexed by the inner vertices of $T$, and it is taken in the path order. A tensor in $V(T)$ is usually denoted by labeling each inner vertex $v$ with an element in $V(\widetilde{v})$, e.g.
\begin{equation*} %\label{tree}
\begin{array}{c}
\begin{tikzpicture}[level distance=7mm, sibling distance=11mm] 
\tikzstyle{every node}=[execute at begin node=$\scriptstyle,execute at end node=$]
\tikzstyle{level 3}=[sibling distance=4mm]
\tikzstyle{level 4}=[sibling distance=2mm]
\node {} [grow'=up]
child{[fill] circle (2pt)
	child{[fill] circle (2pt)
		child{}
		child{[fill] circle (2pt) node [above] {x_3}}
		child{[fill] circle (2pt)
			child{}
			child{}
			child{}
			node [below right] {x_4}}
		node [below left] {x_2}}
	child{[fill] circle (2pt)
		child{}
		node [below right] {x_5}}
	node [below left] {x_1}};
\end{tikzpicture}
\end{array} = x_1\otimes x_2\otimes x_3\otimes x_4\otimes x_5\in V(T)=V(2)\otimes V(3)\otimes V(0)\otimes V(3)\otimes V(1).
\end{equation*}

Labeled trees are also used to denote iterated compositions in operads. There is only one bracketing compatible with the path order, e.g.~in the previous tree $x_1(x_2(-,x_3,x_4),x_5)$. This bracketing, that we call \emph{nested}, is the only one with no \[\dots)(\dots\] 
We can nest any bracketing by iterating \eqref{oprel}. As we will next see, the previous labeled tree can be regarded as an iterated composition in the free operad.

The underlying sequence of the free operad $\mathcal F(V)$ spanned by a sequence of graded modules or chain complexes $V$ is
$$\mathcal F(V)(n)=\bigoplus_TV(T),$$
see for instance \cite[\S5]{htnso}. 
This direct sum is indexed by the trees with $n$ leaves. 
The natural sequence of maps $V\r \mathcal F(V)$ is the inclusion of the direct summands indexed by corollas, $n\geq 0$,
\[
\begin{tikzpicture}[level distance=5mm, sibling distance=5mm] 
\tikzset{every node/.style={ execute at begin node=$\scriptstyle ,%
		execute at end node=$, }}
\node {} [grow'=up] 
child { [fill] circle (2pt)
	child {{} node [above] (A) {}}
	child {node {\displaystyle\cdots}  edge from parent [draw=none] {}}
	child {{} node [above] (O) {}}
};
\draw [decoration={brace}, decorate] ($(A)+(-.08,0)$) -- node [above] {\text{\scriptsize $n$ leaves}} ($(O)+(.08,0)$) ;
\end{tikzpicture}
\]
The composition law $\circ_i$ is given by the symmetry isomorphisms $V(T)\otimes V(T')\cong V(T\circ_iT')$, where $T\circ_iT'$ is the tree obtained by grafting $T'$ in the $i^{\text{th}}$ leaf of $T$ (notice that the inner vertices of $T\circ_iT'$ are the disjoint union of the inner vertices of $T$ and $T'$). Using labeled trees, this is just grafting up to sign determined by the path order and the Koszul rule. The identity element is $\id{}=1\in V(|)=\Bbbk$.

\begin{lem}[{\cite[Lemma 2.3]{rwsobc}}]\label{tensorsdr}
	Given two SDRs
	$$\xymatrix{X\ar@<.5ex>[r]^-{i}&Y\ar@<.5ex>[l]^-p\ar@(ru,rd)^ h},\qquad
	\xymatrix{X'\ar@<.5ex>[r]^-{i'}&Y'\ar@<.5ex>[l]^-{p'}\ar@(ru,rd)^{h'}
	},$$
	the following diagram is also an SDR, that we call \emph{tensor product SDR},
	$$\xymatrix@C=40pt{X\otimes X'\ar@<.5ex>[r]^-{i\otimes i'}&*+[l]{Y\otimes Y'}\ar@<.5ex>[l]^-{p\otimes p'}\ar@(ru,rd)^{ h\otimes 1_{Y'}+ip \otimes h'}}.$$
\end{lem}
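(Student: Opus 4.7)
The plan is to verify the six SDR axioms directly, since the lemma is essentially a graded bookkeeping computation. Writing $H=h\otimes 1_{Y'}+ip\otimes h'$ for the proposed homotopy, I need to check:
\[
(p\otimes p')(i\otimes i')=1_{X\otimes X'},\quad dH+Hd=(i\otimes i')(p\otimes p')-1_{Y\otimes Y'},
\]
\[
(p\otimes p')H=0,\quad H(i\otimes i')=0,\quad H^2=0,
\]
the first being a trivial consequence of $pi=1_X$ and $p'i'=1_{X'}$.

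For the homotopy equation, I would use the graded Leibniz rule for commutators with $d$, namely $[d,A\otimes B]=[d,A]\otimes B+(-1)^{|A|}A\otimes[d,B]$, applied to the two summands of $H$. Since $|h|=|h'|=1$ and $|ip|=0$, and since $ip$ is a chain map (so $[d,ip]=0$), this gives
\[
[d,h\otimes 1]=(dh+hd)\otimes 1=(ip-1_Y)\otimes 1,
\]
\[
[d,ip\otimes h']=ip\otimes(dh'+h'd)=ip\otimes(i'p'-1_{Y'}).
\]
Adding these, the term $-ip\otimes 1$ cancels with the $ip\otimes 1$ arising from $(ip-1_Y)\otimes 1$, and what remains is exactly $ip\otimes i'p'-1\otimes 1=(i\otimes i')(p\otimes p')-1_{Y\otimes Y'}$. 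This is the main step, but it is entirely mechanical.

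The three remaining side conditions expand into sums of tensor products of compositions, each of which vanishes by one of the given SDR axioms. Concretely, $(p\otimes p')H$ produces $ph\otimes p'+(pip)\otimes(p'h')=0$ using $ph=0$ and $p'h'=0$, while $H(i\otimes i')$ produces $hi\otimes i'\pm (ipi)\otimes(h'i')=0$ using $hi=0$ and $h'i'=0$. The only mildly subtle point is $H^2=0$: expanding with the Koszul sign rule $(A\otimes B)(C\otimes D)=(-1)^{|B||C|}AC\otimes BD$ yields four terms, namely $h^2\otimes 1$, $hip\otimes h'$, $-iph\otimes h'$, and $(ip)^2\otimes(h')^2$; these vanish respectively from $h^2=0$, from $hi=0$ (so $hip=(hi)p=0$), from $ph=0$, and from $(h')^2=0$. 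It is worth flagging that the two cross terms are killed by the ``outer'' side conditions $hi=0$ and $ph=0$ rather than by the nilpotency conditions on $h,h'$ alone.

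The hardest aspect is simply keeping the Koszul signs correct throughout; there is no conceptual obstacle, which is presumably why the cited reference \cite{rwsobc} states the result without proof. I would therefore write out the commutator calculation for the homotopy equation in full and indicate the sign pattern, then tabulate the four terms of $H^2$ together with the axioms that annihilate them, leaving the two trivial side equations to the reader.
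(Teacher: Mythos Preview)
Your verification is correct; each of the six axioms checks out exactly as you describe, including the sign analysis in $H^2$. Note that the paper does not supply its own proof of this lemma at all: it simply cites \cite[Lemma 2.3]{rwsobc} and moves on, so there is nothing to compare against beyond confirming that your direct computation establishes the stated result.
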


Tensor product SDRs behave well with respect to associativity constraints, so we can define tensor products of several SDRs by iteration without specifying a bracketing. However, this construction is not symmetric, so the order of tensor factors does matter. This is why we insisted on the path order in the definition of free operads. We could have equally worked with the chain homotopy $ h\otimes i'p'+1_Y\otimes h'$, but we must fix some convention, and we have decided to fix that in Lemma \ref{tensorsdr}.

Tensor products of strong pseudo-cylinders of chain complexes are defined by using tensor products of SDRs in the obvious way. The canonical strong pseudo-cylinder of a sequence of chain complexes is defined aritywise as in Definition \ref{primen}. The empty tensor product is the trivial strong pseudo-cylinder of $\Bbbk$, regarded as a chain complex concentrated in degree $0$,
$$\xymatrix{\Bbbk  
	\ar@<.5ex>[r]^-{1}
	\ar@<-.5ex>[r]_-{1}
	&
	\Bbbk  \ar[r]^-{1}\ar@(ul,ur)^{0}
	&
	\Bbbk.
}$$

\begin{defn}
	The \emph{canonical strong pseudo-cylinder} of a free DG-operad $\mathcal F(V)$ on a sequence $V$ of chain complexes
		$$\xymatrix{\mathcal F(V)
			\ar@<.5ex>[r]^-{\mathcal F(i_0)}
			\ar@<-.5ex>[r]_-{\mathcal F(i_1)}
			&
			\mathcal F(IV)\ar[r]^-{\mathcal F(p)}\ar@(ul,ur)^{h_{V}}
			&
			\mathcal F(V)
		}$$
	is defined on each $V(T)$ as the tensor product (in the path order) of the canonical strong pseudo-cylinders of the sequence of chain complexes $V$. 
\end{defn}

The empty tensor product appears when $T=|$, hence $h_V(\id{})=0$.

\begin{rem}\label{hv}
It is very easy to evaluate $h_{V}$ on labeled trees, e.g.
\begin{align*} %\label{tree}
&h_V\left(\begin{array}{c}
\begin{tikzpicture}[level distance=7mm, sibling distance=11mm] 
\tikzstyle{every node}=[execute at begin node=$\scriptstyle,execute at end node=$]
\tikzstyle{level 3}=[sibling distance=4mm]
\tikzstyle{level 4}=[sibling distance=2mm]
\node {} [grow'=up]
child{[fill] circle (2pt)
	child{[fill] circle (2pt)
		child{}
		child{[fill] circle (2pt) node [above] {x_3}}
		child{[fill] circle (2pt)
			child{}
			child{}
			child{}
			node [below right] {x_4}}
		node [below left] {x_2}}
	child{[fill] circle (2pt)
		child{}
		node [below right] {x_5}}
	node [below left] {x_1}};
\end{tikzpicture}
\end{array}\right)=
\begin{array}{c}
\begin{tikzpicture}[level distance=7mm, sibling distance=11mm] 
\tikzstyle{every node}=[execute at begin node=$\scriptstyle,execute at end node=$]
\tikzstyle{level 3}=[sibling distance=4mm]
\tikzstyle{level 4}=[sibling distance=2mm]
\node {} [grow'=up]
child{[fill] circle (2pt)
	child{[fill] circle (2pt)
		child{}
		child{[fill] circle (2pt) node [above] {x_3}}
		child{[fill] circle (2pt)
			child{}
			child{}
			child{}
			node [below right] {x_4}}
		node [below left] {x_2}}
	child{[fill] circle (2pt)
		child{}
		node [below right] {x_5}}
	node [below left] {h_I(x_1)}};
\end{tikzpicture}
\end{array}
+
(-1)^{\abs{x_1}}
\begin{array}{c}
\begin{tikzpicture}[level distance=7mm, sibling distance=11mm] 
\tikzstyle{every node}=[execute at begin node=$\scriptstyle,execute at end node=$]
\tikzstyle{level 3}=[sibling distance=4mm]
\tikzstyle{level 4}=[sibling distance=2mm]
\node {} [grow'=up]
child{[fill] circle (2pt)
	child{[fill] circle (2pt)
		child{}
		child{[fill] circle (2pt) node [above] {x_3}}
		child{[fill] circle (2pt)
			child{}
			child{}
			child{}
			node [below right] {x_4}}
		node [below left] {h_I(x_2)}}
	child{[fill] circle (2pt)
		child{}
		node [below right] {x_5}}
	node [below left] {i_0p(x_1)}};
\end{tikzpicture}\end{array}\\
&+(-1)^{\abs{x_1}+\abs{x_2}}
\begin{array}{c}
\begin{tikzpicture}[level distance=7mm, sibling distance=11mm] 
\tikzstyle{every node}=[execute at begin node=$\scriptstyle,execute at end node=$]
\tikzstyle{level 3}=[sibling distance=4mm]
\tikzstyle{level 4}=[sibling distance=2mm]
\node {} [grow'=up]
child{[fill] circle (2pt)
	child{[fill] circle (2pt)
		child{}
		child{[fill] circle (2pt) node [above] {h_I(x_3)\quad}}
		child{[fill] circle (2pt)
			child{}
			child{}
			child{}
			node [below right] {x_4}}
		node [below left] {i_0p(x_2)}}
	child{[fill] circle (2pt)
		child{}
		node [below right] {x_5}}
	node [below left] {i_0p(x_1)}};
\end{tikzpicture}
\end{array}
+(-1)^{\abs{x_1}+\abs{x_2}+\abs{x_3}}
\begin{array}{c}
\begin{tikzpicture}[level distance=7mm, sibling distance=13mm] 
\tikzstyle{every node}=[execute at begin node=$\scriptstyle,execute at end node=$]
\tikzstyle{level 3}=[sibling distance=4mm]
\tikzstyle{level 4}=[sibling distance=2mm]
\node {} [grow'=up]
child{[fill] circle (2pt)
	child{[fill] circle (2pt)
		child{}
		child{[fill] circle (2pt) node [above] {i_0p(x_3)\quad}}
		child{[fill] circle (2pt)
			child{}
			child{}
			child{}
			node [below] {\qquad  h_I(x_4)}}
		node [below left] {i_0p(x_2)}}
	child{[fill] circle (2pt)
		child{}
		node [below right] {x_5}}
	node [below left] {i_0p(x_1)}};
\end{tikzpicture}
\end{array}\\
&
+(-1)^{\abs{x_1}+\abs{x_2}+\abs{x_3}+\abs{x_4}}
\begin{array}{c}
\begin{tikzpicture}[level distance=7mm, sibling distance=14mm] 
\tikzstyle{every node}=[execute at begin node=$\scriptstyle,execute at end node=$]
\tikzstyle{level 3}=[sibling distance=4mm]
\tikzstyle{level 4}=[sibling distance=2mm]
\node {} [grow'=up]
child{[fill] circle (2pt)
	child{[fill] circle (2pt)
		child{}
		child{[fill] circle (2pt) node [above] {i_0p(x_3)\quad}}
		child{[fill] circle (2pt)
			child{}
			child{}
			child{}
			node [below] {\qquad\; i_0p(x_4)}}
		node [below left] {i_0p(x_2)}}
	child{[fill] circle (2pt)
		child{}
		node [below right] {h_I(x_5)}}
	node [below left] {i_0p(x_1)}};
\end{tikzpicture}
\end{array}.
\end{align*}
%If the first labels (in the path order) are of the form $i_0(x_1),\dots, i_0(x_{n-1}),\sigma x_n$ with $x_1,\dots, x_n\in V$, then $h_{V}$ vanishes on the tree, since $h_Ii_0=0$, $h_I\sigma=0$, and $p\sigma=0$.

In terms of formulas, given $x_0\in IV(n)$  and $x_1,\dots, x_n\in \mathcal F(IV)$,
\begin{align}\label{hder}
	h_{V}(x_0(x_1,\dots,x_n))={}&h_I(x_0)(x_1,\dots,x_n)\\
\nonumber	&\hspace{-70pt}+
	\sum_{i=1}^{n}(-1)^{\sum\limits_{j=0}^{i-1}\abs{x_j}}i_0p(x_0)(i_0p(x_1),\dots,i_0p(x_{i-1}),h_{V}(x_i),x_{i+1},\dots,x_n).
\end{align}
This yields a straightforward way of computing $h_V$ on nested bracketings of elements in $V$. This equation need not hold if $x_0\in\mathcal F(IV)$ is an arbitrary element.
	
The canonical strong pseudo-cylinder of a free operad is a strong cylinder when $V$ is a sequence of cofibrant chain complexes.
\end{rem}

We now consider strong pseudo-cylinders on the coproduct of an arbitrary DG-operad and a free one.

Given two sequences of graded modules or chain complexes, $U$ and $V$, and a tree $T$, we define 
\[(U,V)(T)\cong\bigotimes_{u\text{ odd}} U(\widetilde{u})
\otimes\bigotimes_{v\text{ even}} V(\widetilde{v}).
\]
The first (resp.~second) tensor product is indexed by the inner vertices of odd (resp.~even) level, and the order of tensor factors should be the path order (in the right hand side of the formula we have separated odd and even inner vertices for lack of a better notation, the isomorphism is defined by the symmetry constraint). In \eqref{tree}, $v_1$ has level $1$, $v_2$ and $v_5$ have level $2$, and $v_3$ and $v_4$ have level~$3$. As above, a tensor in $(U,V)(T)$ is usually denoted by labeling each inner vertex $u$ of odd level with an element in $U(\widetilde{u})$
and each inner vertex $v$ of even level with an element in $V(\widetilde{v})$, e.g.
\begin{equation*} %\label{tree}
\begin{array}{c}
\begin{tikzpicture}[level distance=7mm, sibling distance=11mm] 
\tikzstyle{every node}=[execute at begin node=$\scriptstyle,execute at end node=$]
\tikzstyle{level 3}=[sibling distance=4mm]
\tikzstyle{level 4}=[sibling distance=2mm]
\node {} [grow'=up]
child{[fill] circle (2pt)
	child{[fill] circle (2pt)
		child{}
		child{[fill] circle (2pt) node [above] {x_3}}
		child{[fill] circle (2pt)
			child{}
			child{}
			child{}
			node [below right] {x_4}}
		node [below left] {y_2}}
	child{[fill] circle (2pt)
		child{}
		node [below right] {y_5}}
	node [below left] {x_1}};
\end{tikzpicture}\hspace{-10pt}
\end{array} = x_1\otimes y_2\otimes x_3\otimes x_4\otimes y_5\in (U,V)(T)=U(2)\otimes V(3)\otimes U(0)\otimes U(3)\otimes V(1).
\end{equation*}

Given an arbitrary operad $\mathcal O$ and a sequence of graded modules or chain complexes $V$, the sequence underlying the coproduct $\mathcal O\amalg\mathcal F(V)$ in the category of graded or DG-operads, see \cite[\S5]{htnso} or \cite[Remark 3.9]{udga}, is
\begin{equation}\label{coparb}
	(\mathcal O\amalg\mathcal F(V))(n)=\bigoplus_T(\mathcal O,V)(T).
\end{equation}
This direct sum is indexed by the trees with $n$ leaves, all of them in even level. In \eqref{tree}, the three topmost leaves have level $4$, and the two other leaves have level $3$, so this not an indexing tree here, but the following similar example is,
\begin{equation}\label{tree2}
\begin{array}{c}
\begin{tikzpicture}[level distance=7mm, sibling distance=11mm] 
\tikzstyle{every node}=[execute at begin node=$\scriptstyle,execute at end node=$]
\tikzstyle{level 3}=[sibling distance=4mm]
\tikzstyle{level 4}=[sibling distance=2mm]
\node {} [grow'=up]
child{[fill] circle (2pt)
	child{[fill] circle (2pt)
		child{[fill] circle (2pt) node [above] {v_3}}
		child{[fill] circle (2pt)
			child{}
			child{}
			child{}
			node [below right] {v_4}}
		node [below left] {v_2}}
	child{}
	node [below left] {v_1}};
\end{tikzpicture}
\end{array}
\end{equation}
Using (labeled) trees, structure maps are easily defined as follows. The inclusion of the first factor $\mathcal O\r \mathcal O\amalg\mathcal F(V)$ is the inclusion of the direct summands indexed by corollas. The canonical map $V\r \mathcal O\amalg\mathcal F(V)$ sends an element $y\in V(n)$ to the following labeled tree,
\[
	\begin{tikzpicture}[level distance=5mm, sibling distance=5mm] 
	\tikzset{every node/.style={ execute at begin node=$\scriptstyle ,%
			execute at end node=$, }}
	\node {} [grow'=up] 
	child{[fill] circle (2pt)
	child { [fill] circle (2pt)
		child{[fill] circle (2pt) child {{} node [above] (A) {}} node [left] {\id{\mathcal O}}}
				child {node {\displaystyle\cdots}  edge from parent [draw=none] {}}
		child{[fill] circle (2pt) child {{} node [above] (O) {}} node [right] {\id{\mathcal O}}}
	node [below right] {y}}
node [below right] {\id{\mathcal O}}};
	\draw [decoration={brace}, decorate] ($(A)+(-.08,0)$) -- node [above] {\text{\scriptsize $n$ leaves}} ($(O)+(.08,0)$) ;
	\end{tikzpicture}
\]
Composition $\circ_i$ is given by grafting into the $i^{\text{th}}$ leaf (taking into account the path order and the Koszul sign rule) and, if $u$ (resp.~$v$) is the bottom (resp.~top) vertex of the inner edge created by grafting and $v$ is in the $j^{\text{th}}$ edge adjacent to $u$ from above, then contracting this inner edge and labelling the vertex obtained by merging $u$ and $v$ with the element in $\mathcal O$ obtained by applying $\circ_j$ to the labels of $u$ and $v$ (this also involves a sign, according to the standing conventions, which is $-1$ up to the product of the degree of the label of $v$ by the sum of the degrees of the labels of the vertices strictly between $u$ and $v$), e.g.
\begin{equation*} %\label{tree2}
\!\!\!\!\!\begin{array}{c}
\begin{tikzpicture}[level distance=7mm, sibling distance=4mm] 
\tikzstyle{every node}=[execute at begin node=$\scriptstyle,execute at end node=$]
\tikzstyle{level 3}=[sibling distance=4mm]
\tikzstyle{level 4}=[sibling distance=2mm]
\node {} [grow'=up]
	child{[fill] circle (2pt)
			child{[fill] circle (2pt) node [above] {y_2}}
			child{}
			child{}
			child{[fill] circle (2pt) node [above] {y_3}}
		node [below right] {x_1}};
\draw (1.2,1) node {\displaystyle \circ_2};
\tikzstyle{level 2}=[sibling distance=11mm]
\draw (2.4,0) node {} [grow'=up]
child{[fill] circle (2pt)
	child{[fill] circle (2pt)
		child{[fill] circle (2pt) node [above] {x_3'}}
		child{[fill] circle (2pt)
			child{}
			child{}
			child{}
			node [right] {x_4'}}
		node [below left] {y_2'}}
	child{}
	node [below left] {x_1'}};
\draw (3.4,1) node {\displaystyle =};
\draw (5.7,1.07) node {\displaystyle (-1)^{\abs{y_3}(\abs{x_1'}+\abs{y_2'}+\abs{x_3'}+\abs{x_4'})}};
\tikzstyle{level 2}=[sibling distance=7mm]
\tikzstyle{level 3}=[sibling distance=5mm]
\tikzstyle{level 4}=[sibling distance=4mm]
\tikzstyle{level 5}=[sibling distance=2mm]
\draw (8.9,0) node {} [grow'=up]
child{[fill] circle (2pt)
	child{[fill] circle (2pt) node [above] {y_2}}
	child{}
	child{[fill] circle (2pt)
			child{[fill] circle (2pt)
				child{[fill] circle (2pt) node [above] {x_3'}}
				child{[fill] circle (2pt)
					child{}
					child{}
					child{}
					node [right] {x_4'}}
				node [below left] {y_2'}}
			child{}
			node [right] {x_1'}}
	child{[fill] circle (2pt) node [above] {y_3}}
	node [below right] {x_1}};
\draw (3.4,-3) node {\displaystyle =};
\draw (6.3,-2.93) node {\displaystyle (-1)^{\abs{y_3}(\abs{x_1'}+\abs{y_2'}+\abs{x_3'}+\abs{x_4'})+\abs{y_2}\abs{x_1'}}};
\tikzstyle{level 2}=[sibling distance=7mm]
\tikzstyle{level 3}=[sibling distance=5mm]
\tikzstyle{level 4}=[sibling distance=4mm]
\tikzstyle{level 5}=[sibling distance=2mm]
\draw (10.4,-4) node {} [grow'=up]
child{[fill] circle (2pt)
	child{[fill] circle (2pt) node [above] {y_2}}
	child{}
		child{[fill] circle (2pt)
			child{[fill] circle (2pt) node [above] {x_3'}}
			child{[fill] circle (2pt)
				child{}
				child{}
				child{}
				node [right] {x_4'}}
			node [left] {y_2'}}
		child{}
	child{[fill] circle (2pt) node [above] {y_3}}
	node [below right] {x_1\circ_3 x_1'}};
\end{tikzpicture}
\end{array}
\end{equation*}

\begin{defn}\label{cispc}
	Given a DG-operad $\mathcal O$ equipped with a strong pseudo-cylinder
		$$\xymatrix{\mathcal O
			\ar@<.5ex>[r]^-{i_0}
			\ar@<-.5ex>[r]_-{i_1}
			&
			\mathcal P\ar[r]^-{p}\ar@(ul,ur)^{h_{\mathcal O}}
			&
			\mathcal O
		}$$
	and a sequence of chain complexes $V$, the \emph{canonical induced strong pseudo-cylinder} of the coproduct $\mathcal O\amalg\mathcal F(V)$ 
	$$\xymatrix@C=45pt{\mathcal O\amalg\mathcal F(V)
		\ar@<.5ex>[r]^-{i_0=i_0\amalg\mathcal F(i_0)}
		\ar@<-.5ex>[r]_-{i_1=i_1\amalg\mathcal F(i_1)}
		&
		\mathcal P\amalg\mathcal F(IV)\ar[r]^-{p=p\amalg\mathcal F(p)}\ar@(ul,ur)^{h_{\mathcal O, V}}
		&
		\mathcal O\amalg\mathcal F(V)
	}$$
	is defined on each $(\mathcal O,V)(T)$ as the tensor product of the strong pseudo-cylinder of $\mathcal O$ and the canonical strong pseudo-cylinder of the sequence of chain complexes $V$.
\end{defn}

If $\mathcal O$ is the initial DG-operad and we take the trivial strong pseudo-cylinder on it (Definition \ref{cspc}), with $\mathcal P=\mathcal O$, $i_0=i_1=p=1_{\mathcal O}$, and $h_{\mathcal O}=0$, then we recover the canonical strong pseudo-cylinder of $\mathcal F(V)$.

\begin{rem}\label{formulae}
%\begin{rem}\label{equations2}
	Computing $h_{\mathcal O,V}$ on labeled trees is as easy as with $h_V$ above, e.g.
	\begin{align*} %\label{tree}
	&h_{\mathcal O,V}\left(\begin{array}{c}
	\begin{tikzpicture}[level distance=7mm, sibling distance=11mm] 
	\tikzstyle{every node}=[execute at begin node=$\scriptstyle,execute at end node=$]
	\tikzstyle{level 3}=[sibling distance=4mm]
	\tikzstyle{level 4}=[sibling distance=2mm]
	\node {} [grow'=up]
	child{[fill] circle (2pt)
		child{[fill] circle (2pt)
			child{[fill] circle (2pt) node [above] {x_3}}
			child{[fill] circle (2pt)
				child{}
				child{}
				child{}
				node [below right] {x_4}}
			node [below left] {y_2}}
		child{}
		node [below left] {x_1}};
	\end{tikzpicture}
	\end{array}\right)=
	\begin{array}{c}
	\begin{tikzpicture}[level distance=7mm, sibling distance=11mm] 
	\tikzstyle{every node}=[execute at begin node=$\scriptstyle,execute at end node=$]
	\tikzstyle{level 3}=[sibling distance=4mm]
	\tikzstyle{level 4}=[sibling distance=2mm]
	\node {} [grow'=up]
	child{[fill] circle (2pt)
		child{[fill] circle (2pt)
			child{[fill] circle (2pt) node [above] {x_3}}
			child{[fill] circle (2pt)
				child{}
				child{}
				child{}
				node [below right] {x_4}}
			node [below left] {y_2}}
		child{}
		node [below left] {h_{\mathcal O}(x_1)}};
	\end{tikzpicture}
	\end{array}
	+
	(-1)^{\abs{x_1}}
	\begin{array}{c}
	\begin{tikzpicture}[level distance=7mm, sibling distance=11mm] 
	\tikzstyle{every node}=[execute at begin node=$\scriptstyle,execute at end node=$]
	\tikzstyle{level 3}=[sibling distance=4mm]
	\tikzstyle{level 4}=[sibling distance=2mm]
	\node {} [grow'=up]
	child{[fill] circle (2pt)
		child{[fill] circle (2pt)
			child{[fill] circle (2pt) node [above] {x_3}}
			child{[fill] circle (2pt)
				child{}
				child{}
				child{}
				node [below right] {x_4}}
			node [below left] {h_I(y_2)}}
		child{}
		node [below left] {i_0p(x_1)}};
	\end{tikzpicture}\end{array}\\
	&\qquad\qquad\qquad+(-1)^{\abs{x_1}+\abs{y_2}}
	\begin{array}{c}
	\begin{tikzpicture}[level distance=7mm, sibling distance=11mm] 
	\tikzstyle{every node}=[execute at begin node=$\scriptstyle,execute at end node=$]
	\tikzstyle{level 3}=[sibling distance=4mm]
	\tikzstyle{level 4}=[sibling distance=2mm]
	\node {} [grow'=up]
	child{[fill] circle (2pt)
		child{[fill] circle (2pt)
			child{[fill] circle (2pt) node [above] {h_{\mathcal O}(x_3)\quad}}
			child{[fill] circle (2pt)
				child{}
				child{}
				child{}
				node [below right] {x_4}}
			node [below left] {i_0p(y_2)}}
		child{}
		node [below left] {i_0p(x_1)}};
	\end{tikzpicture}
	\end{array}
	+(-1)^{\abs{x_1}+\abs{y_2}+\abs{x_3}}
	\begin{array}{c}
	\begin{tikzpicture}[level distance=7mm, sibling distance=11mm] 
	\tikzstyle{every node}=[execute at begin node=$\scriptstyle,execute at end node=$]
	\tikzstyle{level 3}=[sibling distance=4mm]
	\tikzstyle{level 4}=[sibling distance=2mm]
	\node {} [grow'=up]
	child{[fill] circle (2pt)
		child{[fill] circle (2pt)
			child{[fill] circle (2pt) node [above] {i_0p(x_3)\quad}}
			child{[fill] circle (2pt)
				child{}
				child{}
				child{}
				node [below right] {h_{\mathcal O}(x_4)}}
			node [below left] {i_0p(y_2)}}
		child{}
		node [below left] {i_0p(x_1)}};
	\end{tikzpicture}
	\end{array}
	\end{align*}
%	If there is a label of the form $\sigma y$, $y\in V$, such that the preceding labels (with respect to the path order) are of the form $i_0(y')$, $y'\in V$, or $x\in\mathcal O$ with $h(x)=0$, then $h_{\mathcal O,V}$ vanishes over the labeled tree, by the same reasons as in Remark \ref{hv}. Analogously, if there is a label of the form $x\in \mathcal O$ with $p(x)=0$ such that the preceding labels are  of form $i_0(y)$, $y\in V$, or $x'\in\mathcal O$ with $h(x')=0$, then $h_{\mathcal O,V}$ also vanishes over the labeled tree.

%\end{rem}

		For $x_0\in IV(n)$ and $x_1,\dots,x_n\in\mathcal P\amalg\mathcal F(IV)$, formula \eqref{hder} holds, replacing $h_V$ with $h_{\mathcal O,V}$. %In particular, if $x_{0}=\sigma x_{0}'$, $x_{0}'\in V$, then $h_{\mathcal O,V}(x_{0}(x_{1},\dots,x_{n}))=0$, since $h_{I}\sigma=0$ and $p\sigma=0$. 	
	
	Formula \eqref{hder} also holds if $x_0\in\mathcal P(n)$ and each $x_i\in\mathcal P\amalg\mathcal F(IV)$, $1\leq i\leq n$, is either $x_{i}=\id{}$ or $x_i=y_{i,0}(y_{i,1},\dots,y_{i,p_i})$ with $y_{i,0}\in IV(p_i)$,  replacing $h_V$ with $h_{\mathcal O,V}$ and $h_I$ with $h_{\mathcal O}$. %Therefore, if $h(x_{0})=0$ and $p(x_{0})=0$ we deduce as above that $h_{\mathcal O,V}(x_{0}(x_{1},\dots,x_{n}))=0$.
	
	%We obtain as in Remark \ref{hv} that, if $x_0\in \Sigma V\subset IV$, then $h_{\mathcal O,V}(x_0(x_1,\dots,x_n))=0$.
	
	%These formulas yield a straightforward way of computing $h_{\mathcal O,V}$ on nested bracketings of elements in $\mathcal O$ and $V$.
\end{rem}

We now consider a class of DG-operads obtained by perturbing the differential on a coproduct of the form $\mathcal O \amalg\mathcal F(V)$, and define strong pseudo-cylinders for them.

\begin{defn}\label{tc}
	Given a DG-operad $\mathcal O$, a sequence of chain complexes $V$, and a sequence of degree $-1$ maps $\partial\colon V\r\mathcal O$ satisfying
	\[\partial d_V +d_\mathcal O\partial=0,\]
	the \emph{twisted coproduct} $\mathcal O \amalg_\partial\mathcal F(V)$ is the DG-operad with the same underlying graded operad as $\mathcal O \amalg\mathcal F(V)$ and differential $d_\partial$ satisfying
	\begin{align*}
		d_{\partial}(x)&=d_{\mathcal O}(x),\quad x\in \mathcal O;&
		d_{\partial}(y)&=d_V(y)+\partial(y),\quad y\in V.
	\end{align*}
\end{defn}

Using the structure of the coproduct with a free operad, recalled above, it is straightforward to check that the differential of the twisted coproduct is unique and well defined by the two last equations. The following is an alternative homotopical argument. The first equation is equivalent to say that $\partial$ is a sequence of chain maps from the desuspension $\Sigma^{-1}V\r\mathcal O$. This sequence of maps extends to an operad map $\mathcal F(\Sigma^{-1}V)\r\mathcal O$, and $\mathcal O \amalg_\partial\mathcal F(V)$ is the mapping cone of this operad map. For this, we use the cone on $\mathcal F(\Sigma^{-1}V)$ obtained as the free operad on the sequence of usual chain cone on $\Sigma^{-1}V$. These are model theoretic cones whenever $V$ is a sequence of cofibrant complexes. Hence, in that case, the canonical inclusion $\mathcal O \r \mathcal O \amalg_\partial\mathcal F(V)$ is a principal cofibration.

A DG-operad map $f\colon \mathcal O \amalg_\partial\mathcal F(V)\r\mathcal Q$ is the same as pair formed by a DG-operad map $f_{\mathcal O}\colon\mathcal O\r\mathcal Q$ and a sequence of maps of graded modules $f_V\colon V\r\mathcal Q$ such that $d_{\mathcal Q}f_V=f_Vd_V+f_{\mathcal O}\partial$. For $\partial=0$, the twisted coproduct is the plain coproduct.

In the construction of canonical induced strong pseudo-cylinders of twisted coproducts, we will use 
the following well-known lemma from homological algebra.

\begin{lem}[Basic Perturbation Lemma \cite{tezt}]\label{bpl}
	Given an SDR 
	$$\xymatrix@C=40pt{(X,d_X)\ar@<.5ex>[r]^-{i}&*+[l]{(Y,d_Y)}\ar@<.5ex>[l]^-p\ar@(ru,rd)^ h}$$
	and a degree $-1$ map $\partial\colon Y\r Y$, called \emph{perturbation}, such that $\partial^2+d_Y\partial+\partial d_Y=0$ and the infinite sum
	$\Sigma_\infty=\sum_{n\geq 0}(\partial h)^n\partial$
	is well defined, i.e.~almost all summands vanish when evaluated at a given $y\in Y$, then there is a new SDR
	$$\xymatrix@C=60pt{(X,d_X+p\Sigma_\infty i)\ar@<.5ex>[r]^-{i+ h\Sigma_{\infty}i}&*+[l]{(Y,d_Y+\partial)}\ar@<.5ex>[l]^-{p+p\Sigma_\infty  h}\ar@(ru,rd)^{h+ h\Sigma_\infty h}}.$$
\end{lem}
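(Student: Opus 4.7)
The plan is to verify each of the four SDR axioms (plus the chain-map and homotopy properties) for the perturbed data by direct algebraic manipulation, building on two workhorse ingredients: the original SDR relations $pi=1_X$, $ph=0$, $hi=0$, $h^2=0$, $ip-1_Y=d_Yh+hd_Y$, and two recursive identities for $\Sigma_\infty$ obtained by peeling off the first or last factor in the defining series,
\[
\Sigma_\infty\;=\;\partial+\partial h\,\Sigma_\infty\;=\;\partial+\Sigma_\infty h\,\partial.
\]
The hypothesis $\partial^2+d_Y\partial+\partial d_Y=0$ is exactly the statement that $d_Y+\partial$ squares to zero, so the task reduces to checking that the stated perturbed maps satisfy the ordinary SDR axioms for the new differentials.

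I would first dispose of the normalization axioms, which follow by brute expansion. For instance,
\[
(p+p\Sigma_\infty h)(i+h\Sigma_\infty i)\;=\;pi+ph\,\Sigma_\infty i+p\Sigma_\infty hi+p\Sigma_\infty h\,h\Sigma_\infty i\;=\;pi\;=\;1_X,
\]
since three of the four terms vanish by $ph=0$, $hi=0$, and $h^2=0$. The analogous expansions yield $(p+p\Sigma_\infty h)(h+h\Sigma_\infty h)=0$, $(h+h\Sigma_\infty h)(i+h\Sigma_\infty i)=0$, and $(h+h\Sigma_\infty h)^2=0$ immediately.

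Next I would verify that $i+h\Sigma_\infty i$ and $p+p\Sigma_\infty h$ are chain maps between $(X,d_X+p\Sigma_\infty i)$ and $(Y,d_Y+\partial)$. The computation is of the form ``expand, group, and substitute $d_Yh+hd_Y=ip-1_Y$''; the resulting terms in $\partial$ telescope thanks to the recurrences above, so that, for example, $(d_Y+\partial)(i+h\Sigma_\infty i)=(i+h\Sigma_\infty i)(d_X+p\Sigma_\infty i)$ collapses to an identity between two expressions in $\partial,h,i$ that both equal $d_Y\, i+(ip)\,\Sigma_\infty i$.

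The main obstacle will be the homotopy relation
\[
(i+h\Sigma_\infty i)(p+p\Sigma_\infty h)-1_Y\;=\;(d_Y+\partial)(h+h\Sigma_\infty h)+(h+h\Sigma_\infty h)(d_Y+\partial),
\]
which involves many cross terms of mixed type. My approach would be to expand the right-hand side, systematically replace every occurrence of $d_Yh+hd_Y$ by $ip-1_Y$, and then rewrite the remaining strings containing $\partial$ using $\partial h\Sigma_\infty=\Sigma_\infty-\partial$ and $\Sigma_\infty h\partial=\Sigma_\infty-\partial$ on the left and right respectively. After this bookkeeping, both sides reduce to $ip+ih\Sigma_\infty p+ip\Sigma_\infty h+h\Sigma_\infty ip\Sigma_\infty h-1_Y$ (or an equivalent rearrangement), establishing the identity. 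This is the only step where a careful sign and ordering check is really needed; all other verifications are essentially mechanical consequences of the two recurrences together with the four SDR normalizations.
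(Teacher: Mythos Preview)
The paper does not supply a proof of this lemma at all: it is quoted as a classical result with a citation to Brown's 1965 paper, and is used as a black box thereafter. So there is no ``paper's own proof'' to compare against.

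Your outlined argument is the standard direct verification and is correct in spirit. The side conditions $ph=0$, $hi=0$, $h^2=0$ kill exactly the cross terms you indicate, the two telescoping identities $\Sigma_\infty=\partial+\partial h\Sigma_\infty=\partial+\Sigma_\infty h\partial$ are the right tools for the chain-map and homotopy checks, and your plan for the homotopy relation (replace each $d_Yh+hd_Y$ by $ip-1_Y$, then collapse the $\partial$-strings via the recurrences) is the usual route. One small slip: in your final displayed expression you wrote $ih\Sigma_\infty p$ where you meant $h\Sigma_\infty ip$ (expand $(i+h\Sigma_\infty i)(p+p\Sigma_\infty h)$ carefully), but this is a typo rather than a gap in the argument.
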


\begin{rem}\label{filtration}
	The vanishing condition is fulfilled if $Y$ is equipped with an exhaustive increasing filtration $$0=F_{-1}Y\subset F_0Y\subset \cdots\subset F_nY\subset F_{n+1}Y\subset\cdots \subset Y,\qquad Y=\bigcup_{n\geq 0}F_nY,$$
	such that for $n\geq 0$ $$\partial(F_{n}Y)\subset F_{n-1}Y,\qquad  h(F_nY)\subset F_nY.$$
	This implies that, if $y\in F_{n}Y$, then $\Sigma_{\infty}(y)=\sum_{j=0}^{n-1}(\partial h)^{j}\partial(y)$. The maps $i$ and $p$ often preserve the filtration, like $h$. 
\end{rem}

\begin{rem}\label{hrec}
	The new chain homotopy $h'=h+ h\Sigma_\infty h$ satisfies the following equation, \[h'=h+h'\partial h.\]
\end{rem}

\begin{rem}\label{formulae2}
The differential of the twisted coproduct $\mathcal O \amalg_\partial\mathcal F(V)$ is obtained by perturbing the differential of the honest coproduct $\mathcal O \amalg\mathcal F(V)$. The perturbation $d_\partial-d_{\mathcal O \amalg\mathcal F(V)}$, that we also call $\partial$, is the only degree $-1$ self-map of $\mathcal O \amalg\mathcal F(V)$ satisfying $\partial(\mathcal O)=0$, the operadic Leibniz rule, and restricting to the original $\partial\colon V \r\mathcal O$ on $V$. Applying $\partial$ to a labeled tree with $m$ inner vertices of even level, we obtain a sum of labeled trees with the same shape (with signs coming from the path order and the Koszul sign rule), one for each $1\leq i\leq m$,  where all labels are the same except for label of the $i^{\text{th}}$ inner vertex of even level, which changes from $y$ to $\partial (y)$. These labeled trees are to be regarded as iterated compositions in $\mathcal O \amalg\mathcal F(V)$, since there may be adjacent labels in $\mathcal O$, e.g.
\begin{align*}
&\partial\left(\begin{array}{c}
\begin{tikzpicture}[level distance=7mm, sibling distance=4mm] 
\tikzstyle{every node}=[execute at begin node=$\scriptstyle,execute at end node=$]
\tikzstyle{level 3}=[sibling distance=4mm]
\tikzstyle{level 4}=[sibling distance=2mm]
\node {} [grow'=up]
child{[fill] circle (2pt)
	child{[fill] circle (2pt) 
		child{[fill] circle (2pt) node [above] {x_3}}
		node [left] {y_2}}
	child{}
	child{}
	child{[fill] circle (2pt) node [above] {y_4}}
	node [below right] {x_1}};
\end{tikzpicture}
\end{array}\right)=
(-1)^{\abs{x_1}}\!\!\!
\begin{array}{c}
\begin{tikzpicture}[level distance=7mm, sibling distance=4mm] 
\tikzstyle{every node}=[execute at begin node=$\scriptstyle,execute at end node=$]
\tikzstyle{level 3}=[sibling distance=4mm]
\tikzstyle{level 4}=[sibling distance=2mm]
\node {} [grow'=up]
child{[fill] circle (2pt)
	child{[fill] circle (2pt) 
		child{[fill] circle (2pt) node [above] {x_3}}
		node [left] {\partial(y_2)}}
	child{}
	child{}
	child{[fill] circle (2pt) node [above] {y_4}}
	node [below left] {x_1}};
\end{tikzpicture}
\end{array}
+(-1)^{\abs{x_1}+\abs{y_2}+\abs{x_3}}
\begin{array}{c}
\begin{tikzpicture}[level distance=7mm, sibling distance=4mm] 
\tikzstyle{every node}=[execute at begin node=$\scriptstyle,execute at end node=$]
\tikzstyle{level 3}=[sibling distance=4mm]
\tikzstyle{level 4}=[sibling distance=2mm]
\node {} [grow'=up]
child{[fill] circle (2pt)
	child{[fill] circle (2pt) 
		child{[fill] circle (2pt) node [above] {x_3}}
		node [left] {y_2}}
	child{}
	child{}
	child{[fill] circle (2pt) node [above] {\partial(y_4)}}
	node [below left] {x_1}};
\end{tikzpicture}
\end{array}\\
&\qquad\quad=
(-1)^{\abs{x_1}}\hspace{-27pt}
\begin{array}{c}
\begin{tikzpicture}[level distance=7mm, sibling distance=4mm] 
\tikzstyle{every node}=[execute at begin node=$\scriptstyle,execute at end node=$]
\tikzstyle{level 3}=[sibling distance=4mm]
\tikzstyle{level 4}=[sibling distance=2mm]
\node {} [grow'=up]
child{[fill] circle (2pt)
	child{}
	child{}
	child{[fill] circle (2pt) node [above] {y_4}}
	node [below left] {x_1\circ_1(\partial (y_2)\circ_1 x_3)}};
\end{tikzpicture}
\end{array}
+(-1)^{\abs{x_1}+\abs{y_2}+\abs{x_3}+(\abs{y_4}-1)(\abs{y_2}+\abs{x_3})}\!\!\!
\begin{array}{c}
\begin{tikzpicture}[level distance=7mm, sibling distance=4mm] 
\tikzstyle{every node}=[execute at begin node=$\scriptstyle,execute at end node=$]
\tikzstyle{level 3}=[sibling distance=4mm]
\tikzstyle{level 4}=[sibling distance=2mm]
\node {} [grow'=up]
child{[fill] circle (2pt)
	child{[fill] circle (2pt) 
		child{[fill] circle (2pt) node [above] {x_3}}
		node [left] {y_2}}
	child{}
	child{}
	node [below left] {x_1\circ_4\partial(y_4)}};
\end{tikzpicture}
\end{array}
\end{align*}
\end{rem}

\begin{thm}\label{pasting2}
	Let $\mathcal O\amalg_\partial\mathcal F(V)$ be a twisted coproduct as in Definition \ref{tc}. Assume we have chosen a strong pseudo-cylinder for $\mathcal O$ as in Definition \ref{cilindrodgop}. Then there is a strong pseudo-cylinder for $\mathcal O\amalg_\partial\mathcal F(V)$, that we call \emph{canonical induced strong pseudo-cylinder},
	$$\xymatrix@C=50pt{\mathcal O\amalg_\partial\mathcal F(V)
		\ar@<.5ex>[r]^-{i_0=i_{0}\amalg \mathcal F(i_{0})}
		\ar@<-.5ex>[r]_-{i_1=i_{1}\amalg \mathcal F(i_{1})}
		&
		\mathcal P
		\amalg_{\partial_I}\mathcal F(IV)
		\ar[r]^-{p=p\amalg\mathcal F(p)}
		\ar@(ul,ur)^{h_\partial}
		&
		\mathcal O
		\amalg_\partial\mathcal F(V)
	}$$
	such that %the horizontal arrows are as in Proposition \ref{pasting1}, 
	the degree $-1$ map $\partial_I\colon V\oplus\Sigma V\oplus V\r\mathcal P$ is
	\[
	\partial_I=\left(\begin{smallmatrix}
	i_0\partial&-h_{\mathcal O}i_1\partial&i_1\partial
	\end{smallmatrix}\right)
	\]
	and
	$$h_\partial=\sum_{n\geq 0}(h_{\mathcal O,V}\partial_I)^nh_{\mathcal O,V},$$
	where $\partial_I$ denotes here the extension to $\mathcal P\amalg\mathcal F(IV)$ in Remark \ref{formulae2}.
\end{thm}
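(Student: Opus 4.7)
The strategy is to apply the Basic Perturbation Lemma (Lemma~\ref{bpl}) to the canonical induced strong pseudo-cylinder of the untwisted coproduct $\mathcal O\amalg\mathcal F(V)$ from Definition~\ref{cispc}, with perturbation equal to the Leibniz extension to $\mathcal P\amalg\mathcal F(IV)$ of the degree $-1$ map $\partial_I\colon IV\r\mathcal P$ given in the statement (as described in Remark~\ref{formulae2}). I will verify the BPL hypotheses, check convergence of $\Sigma_{\infty}$, identify the resulting SDR with the data claimed, and finally verify operadic compatibility.

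For the BPL hypotheses, I need $(d+\partial_I)^2=0$ on $\mathcal P\amalg\mathcal F(IV)$. Since $\partial_I$ is an operadic derivation vanishing on $\mathcal P$ and sending $IV$ into $\mathcal P$, the composite $\partial_I^2$ is a derivation vanishing on all generators, hence everywhere. The identity $d\partial_I+\partial_I d=0$ reduces, via the Leibniz rule, to the same identity on $IV$, which is a direct matrix computation on $IV=V\oplus\Sigma V\oplus V$ using $d_{\mathcal P}h_{\mathcal O}+h_{\mathcal O}d_{\mathcal P}=i_0p-1_{\mathcal P}$ and $d_{\mathcal O}\partial+\partial d_V=0$. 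Convergence of $\Sigma_{\infty}$ follows from Remark~\ref{filtration} via the exhaustive filtration of $\mathcal P\amalg\mathcal F(IV)$ by the number of inner vertices of even level (equivalently, $IV$-labels): this filtration is preserved by $h_{\mathcal O,V}$, which acts label-by-label on a fixed tree shape, and strictly decreased by $\partial_I$, which converts an $IV$-label into a $\mathcal P$-label.

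The crux is identifying the output of Lemma~\ref{bpl} with the data in the statement. Two tensor-product SDR identities are used repeatedly: $h_{\mathcal O,V}\circ i_0=0$ and $p\circ h_{\mathcal O,V}=0$, both inherited by induction on the number of tensor factors from $h_{\mathcal O}i_0=h_Ii_0=0$ and $ph_{\mathcal O}=ph_I=0$ on individual factors. From $h_{\mathcal O,V}i_0=0$, only the $n=0$ term of $\Sigma_{\infty}i_0$ survives, giving $\Sigma_{\infty}i_0=\partial_I i_0$; since $\partial_I$ and $\partial$ are derivations agreeing on generators ($\partial_I\circ i_0=i_0\circ\partial$, trivially on $\mathcal O$ and by the formula on $V$), we get $p\Sigma_{\infty}i_0=pi_0\partial=\partial$. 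Thus the perturbed left-hand differential of the BPL output is $d+\partial=d_{\partial}$, and the correction $h_{\mathcal O,V}\Sigma_{\infty}i_0$ to $i_0$ vanishes. Dually, a generator-level check shows $p\circ\partial_I=\partial\circ p$ (both are zero on $\mathcal P$; on $IV$ one uses $pi_0=pi_1=1$ and $ph_{\mathcal O}=0$), which, combined with $ph_{\mathcal O,V}=0$, yields $p\Sigma_{\infty}h_{\mathcal O,V}=0$, leaving $p$ unchanged. Finally, rearranging $h_{\mathcal O,V}+h_{\mathcal O,V}\Sigma_{\infty}h_{\mathcal O,V}=\sum_{n\geq0}(h_{\mathcal O,V}\partial_I)^nh_{\mathcal O,V}$ gives the claimed formula for $h_{\partial}$.

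To conclude, $i_0\amalg\mathcal F(i_0)$, $i_1\amalg\mathcal F(i_1)$ and $p\amalg\mathcal F(p)$ are operad maps by construction, and their compatibility with the twisted differentials reduces to checking on generators: trivial on $\mathcal O/\mathcal P$, and on $V/IV$ it follows from $\partial_I i_{\epsilon}=i_{\epsilon}\partial$ ($\epsilon=0,1$) and $p\partial_I=\partial p$ already observed above. The identity $pi_1=1_{\mathcal O\amalg_{\partial}\mathcal F(V)}$ follows since both sides are operad maps agreeing on generators. The main technical hurdle is the verification of $d\partial_I+\partial_I d=0$ on the middle summand $\Sigma V$ of $IV$; this is the only place the nontrivial term $-h_{\mathcal O}i_1\partial$ enters, and the cancellation depends on a careful deployment of the SDR axiom for $h_{\mathcal O}$. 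Everything else is either routine bookkeeping or a formal consequence of BPL and the tensor-product SDR structure.
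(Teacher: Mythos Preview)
Your proposal is correct and follows essentially the same route as the paper: apply the Basic Perturbation Lemma to the SDR underlying the canonical induced strong pseudo-cylinder of the untwisted coproduct, verify the perturbation equation on generators via the matrix computation on $IV$, use the even-level filtration for convergence, and then check that $i_0$, $p$, and the induced perturbation on the small side are unchanged. The one small variation is your verification of $p\Sigma_\infty h_{\mathcal O,V}=0$: you deduce it from the global identity $p\,\partial_I=\partial\, p$ combined with $p\,h_{\mathcal O,V}=0$, whereas the paper argues directly that $p\,\partial_I\, h_{\mathcal O,V}=0$ by inspecting labeled trees and computing $p\,\partial_I\, h_I$ as a matrix; your route is slightly cleaner but not substantively different.
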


\begin{proof}
	We first check the necessary equation to define the twisted coproduct $\mathcal P
	\amalg_{\partial_I}\mathcal F(IV)$,
	\begin{align*}
		\partial_Id_{IV}+d_{\mathcal P}\partial_I&=
		\left(\begin{smallmatrix}
		i_0\partial&-h_{\mathcal O}i_1\partial&i_1\partial
		\end{smallmatrix}\right)\left(\begin{smallmatrix}
		d_V&1&0\\0&-d_V&0\\0&-1&d_V
		\end{smallmatrix}\right)
		+d_{\mathcal P}
		\left(\begin{smallmatrix}
		i_0\partial&-h_{\mathcal O}i_1\partial&i_1\partial
		\end{smallmatrix}\right)\\
		&=		\left(\begin{smallmatrix}
		i_0\partial d_V&
		i_0\partial+h_{\mathcal O}i_1\partial d_V -i_1\partial
		&i_1\partial d_V
		\end{smallmatrix}\right)
		+		\left(\begin{smallmatrix}
		d_{\mathcal P}i_0\partial&-d_{\mathcal P}h_{\mathcal O}i_1\partial&d_{\mathcal P}i_1\partial
		\end{smallmatrix}\right)\\
				&=		\left(\begin{smallmatrix}
				-i_0d_{\mathcal O}\partial &
				i_0\partial-h_{\mathcal O}i_1d_{\mathcal O}\partial  -i_1\partial
				&-i_1d_{\mathcal O}\partial 
				\end{smallmatrix}\right)
				+		\left(\begin{smallmatrix}
				i_0d_{\mathcal O}\partial&
				-d_{\mathcal P}h_{\mathcal O}i_1\partial
				&i_1d_{\mathcal O}\partial
				\end{smallmatrix}\right)\\
				&=		\left(\begin{smallmatrix}
				0&
				i_0pi_1\partial- i_1\partial - h_{\mathcal O}d_{\mathcal P}i_1\partial   -d_{\mathcal P}h_{\mathcal O}i_1\partial
				&0
				\end{smallmatrix}\right)\\
				&=		\left(\begin{smallmatrix}
				0&
				(i_0p- 1 - h_{\mathcal O}d_{\mathcal P}-d_{\mathcal P}h_{\mathcal O})i_1\partial
				&0
				\end{smallmatrix}\right)\\
				&=		\left(\begin{smallmatrix}
				0&
				0
				&0
				\end{smallmatrix}\right).
	\end{align*}
	Here we use the equations $\partial d_V+d_{\mathcal O}\partial =0$, $i_0p-1_{\mathcal P}=d_{\mathcal P}h_{\mathcal O}+h_{\mathcal O}d_{\mathcal P}$, and $pi_1=1_{\mathcal O}$, and the fact that $i_0,i_1\colon\mathcal O\r\mathcal P$ are DG-maps. The perturbation equation in Lemma \ref{bpl} is a consequence of this. We want to apply this lemma to the sequence of SDRs
	$$\xymatrix@C=60pt{\mathcal O\amalg\mathcal F(V)\ar@<.5ex>[r]^-{i_0}&*+[l]{\mathcal P\amalg\mathcal F(IV)}\ar@<.5ex>[l]^-{p}\ar@(ru,rd)^ {h_{\mathcal O,V}}}$$
	The vanishing condition follows from the criterion in Remark \ref{filtration}. %The filtration of $\mathcal P\amalg\mathcal F(IV)$ is aritywise defined by using the direct sum decomposition. 
	The sequence of subcomplexes $F_n(\mathcal P\amalg\mathcal F(IV))$ is aritywise the direct subsum indexed by the trees $T$ with $\leq n$ vertices of even level. The chain homotopy $h_{\mathcal O,V}$ preserves filtration levels since it restricts to each direct summand. It is easy to see that the extension of $\partial_I$	to $\mathcal P\amalg\mathcal F(IV)$	strictly decreases filtration levels by using the Leibniz rule and the facts that $\partial_I(\mathcal P)=0$ and $\partial_I(IV)\subset\mathcal P$.
	
	The perturbation lemma applies, but we still have to check that the maps $i_0$ and $p$ do not change and that the induced perturbation on $\mathcal O\amalg\mathcal F(V)$ is $\partial$, i.e.
	\begin{align*}
	h_{\mathcal O,V}\Sigma_\infty i_0&=0,&
	p\Sigma_\infty h_{\mathcal O,V}&=0,&
	p\Sigma_\infty i_0&=\partial.
	\end{align*}

	Since $\partial_Ii_0=i_0\partial\colon\mathcal O\amalg\mathcal F(V)\r \mathcal P\amalg\mathcal F(IV)$, $h_{\mathcal O,V}\partial_I i_0=h_{\mathcal O,V}i_0\partial=0\partial=0$. This clearly proves the first equation. Also the third one, since it implies that $p\Sigma_\infty i_0=p\partial_Ii_0=pi_0\partial=\partial$.

	The middle equation follows from $p\partial_Ih_{\mathcal O,V}=0$, that we now check. Since $ph_{\mathcal O}=0\colon\mathcal P\r\mathcal O$ and $ph_I=0\colon IV\r V$, $p$ applied to a labeled tree containing a label of the form $h_{\mathcal O}(x)$, $x\in\mathcal P$, or $h_I(y)$, $y\in IV$, yields $0$. If we apply $\partial_Ih_{\mathcal O,V}$ to a labeled tree in $\mathcal P\amalg\mathcal F(IV)$, we obtain a linear combination of labeled trees, many of them containing labels of the previous form, except for those where the label of an inner vertex of even degree has changed from $y\in IV$ to $\partial_Ih_I(y)$, but
	\[
	p\partial_Ih_I=p\left(\begin{smallmatrix}
	i_0\partial&-h_{\mathcal O}i_1\partial&i_1\partial
	\end{smallmatrix}\right)\left(\begin{smallmatrix}
	0&0&0\\0&0&\sigma\\0&0&0
	\end{smallmatrix}\right)= \left(\begin{smallmatrix}
	0&0&-ph_{\mathcal O}i_1\partial\sigma
	\end{smallmatrix}\right)= \left(\begin{smallmatrix}
	0&0&-0i_1\partial\sigma
	\end{smallmatrix}\right)= \left(\begin{smallmatrix}
	0&0&0
	\end{smallmatrix}\right).
	\]
	Therefore, $p\partial_Ih_{\mathcal O,V}=0$.
\end{proof}

For $\partial=0$, we recover the canonical induced strong pseudo-cylinder of the coproduct. The differential in the canonical strong pseudo-cylinder of an element of the form $\sigma(x)$, $x\in V$, is
\begin{equation}\label{dsigma1}
d\sigma(x)=i_0(x)-i_1(x)-h_{\mathcal O}i_1\partial(x).
\end{equation}

In general, the computation of $h_\partial$ is somewhat involved. Let us just check that it extends the chain homotopies of the canonical strong pseudo-cylinder of $V$ and the chosen strong pseudo-cylinder of $\mathcal O$.

\begin{lem}\label{computa0}
	If $\mathcal P\amalg_{\partial_I}\mathcal F(IV)$ is the canonical induced strong pseudo-cylinder of a twisted coproduct, $x\in\mathcal P$, and $y\in IV$, then 
	\begin{align*}
		h_\partial(x)&=h_{\mathcal O}(x),&
		h_\partial(y)&=h_I(y).
	\end{align*}
\end{lem}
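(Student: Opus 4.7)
The plan is to evaluate the series
\[
h_\partial=\sum_{n\geq 0}(h_{\mathcal O,V}\partial_I)^n h_{\mathcal O,V}
\]
directly on each of the two classes of elements and verify that essentially only the leading $n=0$ term contributes.

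For $x\in\mathcal P$, the element sits inside $\mathcal P\amalg\mathcal F(IV)$ as the corolla summand of the appropriate arity. The canonical induced strong pseudo-cylinder is defined tree-by-tree as a tensor-product SDR (Definition \ref{cispc}), and on a single-vertex tree this reduces to the given SDR of $\mathcal P$; hence $h_{\mathcal O,V}(x)=h_{\mathcal O}(x)\in\mathcal P$. Because the extension of $\partial_I$ to $\mathcal P\amalg\mathcal F(IV)$ is the derivation vanishing on $\mathcal P$ (Remark \ref{formulae2}), we obtain $\partial_I h_{\mathcal O,V}(x)=0$, so every term with $n\geq 1$ vanishes and $h_\partial(x)=h_{\mathcal O}(x)$.

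For $y\in IV(n)$, I would first compute $h_{\mathcal O,V}(y)$ by identifying $y\in\mathcal P\amalg\mathcal F(IV)$ with its sandwich representative $\id{\mathcal P}(y(\id{\mathcal P},\dots,\id{\mathcal P}))$ and iterating the formula of Remark \ref{formulae} (the version valid for $x_0\in\mathcal P(n)$ whose $x_i$'s are identities or sandwiches). Each summand of the result is indexed by which factor of the sandwich absorbs the relevant chain homotopy. Under the assumption $h_{\mathcal O}(\id{})=0$, which is satisfied by the canonical strong pseudo-cylinder of a free DG-operad and propagates inductively through this very construction, every summand landing on an $\id{\mathcal P}$-factor vanishes, so only the middle term survives, yielding the sandwich representation of $h_I(y)\in IV$. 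Next, $h_I$ has image in the $\Sigma V$-summand of $IV=V\oplus\Sigma V\oplus V$, where $\partial_I$ acts as $-h_{\mathcal O}i_1\partial$; hence $\partial_I h_I(y)\in\mathcal P$. Applying the first part of the lemma, $h_{\mathcal O,V}\partial_I h_I(y)=h_{\mathcal O}\partial_I h_I(y)$ is proportional to $h_{\mathcal O}^2 i_1\partial(\cdot)=0$ by the SDR axiom $h_{\mathcal O}^2=0$. Thus the $n=1$ term already vanishes, and all higher terms vanish as well, giving $h_\partial(y)=h_I(y)$.

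The main technical obstacle is the second case: one has to carefully unpack the sandwich representation of $y$ and iterate the tensor-product SDR formula while tracking which identities get absorbed. Once a single application of $\partial_I$ lands us inside $\mathcal P$, the SDR axiom $h_{\mathcal O}^2=0$ closes the argument immediately.
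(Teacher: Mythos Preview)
Your proof is correct and follows the same strategy as the paper's. The paper packages the bookkeeping via the filtration of Remark~\ref{filtration} and the proof of Theorem~\ref{pasting2}: $x\in\mathcal P$ sits in filtration degree~$0$, so only the $n=0$ summand survives, while $y\in IV$ sits in filtration degree~$1$, so only the $n=0,1$ summands can contribute, and the $n=1$ term is then killed by exactly the matrix computation you give (reducing to $h_{\mathcal O}^2=0$). Your direct term-by-term evaluation does the same work without naming the filtration. One point on which you are actually more careful than the paper: the step $h_{\mathcal O,V}(y)=h_I(y)$ tacitly uses $h_{\mathcal O}(\id{\mathcal P})=0$, which you make explicit and which the paper leaves implicit (it holds in every inductive application by Corollary~\ref{uno}).
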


\begin{proof}
	We use Remark \ref{filtration} and the filtration in the proof of Theorem \ref{pasting2}. 
	As an element of $\mathcal P\amalg_{\partial_I}\mathcal F(IV)$, $x\in\mathcal P$ has filtration degree $0$, hence
	\[h_\partial(x)=h_{\mathcal O,V}(x)=h_{\mathcal O}(x).\]
	Moreover, $y\in IV$ has filtration degree $1$, so 
	\begin{align*}
	h_{\partial}(y)&=h_{\mathcal O,V}(y)+h_{\mathcal O}\partial_Ih_{\mathcal O,V}(y)\\
	&=h_I(y)+h_{\mathcal O}\partial_Ih_I(y)\\
	&=h_I(y)+h_{\mathcal O}\left(\begin{smallmatrix}
	i_0\partial&-h_{\mathcal O}i_1\partial&i_1\partial
	\end{smallmatrix}\right)\left(\begin{smallmatrix}
	0&0&0\\0&0&\sigma\\0&0&0
	\end{smallmatrix}\right)(y)\\
	&=h_I(y)+\left(\begin{smallmatrix}
	0&0&-h^2_{\mathcal O}i_1\partial\sigma
	\end{smallmatrix}\right)(y)\\
	&=h_I(y).
	\end{align*}
	Here we use that $h^2_{\mathcal O}=0$.
\end{proof}

We finally consider strong pseudo-cylinders on DG-operads constructed as iterated twisted coproducts.

\begin{defn}\label{rpc}
	A DG-operad $\mathcal O$ is \emph{relatively pseudo-cellular} if it is equipped with an increasing filtration $\{\mathcal O_\beta\}_{\beta\leq\alpha}$ indexed by an ordinal $\alpha$, that we call \emph{length}, which is \emph{exhaustive}, i.e.~$\mathcal O=\mathcal O_\alpha$, \emph{continuous}, i.e.~if $\beta\leq\alpha$ is a limit ordinal then $\mathcal O_\beta=\colim_{\gamma<\beta}\mathcal O_\gamma$ (here $\colim$ can be replaced with $\cup$), and such that if $\beta+1\leq \alpha$ then $\mathcal O_{\beta+1}$ is a twisted coproduct of the following form,
	\[\mathcal O_{\beta+1}=\mathcal O_{\beta}\amalg_{\partial_\beta}\mathcal F(V_\beta).\]
	We say that $\mathcal O$ is \emph{(absolutely) pseudo-cellular} if in addition $\mathcal O_0$ is the initial DG-operad.
\end{defn}

If the complexes of the sequences $V_\beta$ are cofibrant for all $\beta<\alpha$, then the inclusion $\mathcal O_0\r \mathcal O$ is a cofibration. In particular, $\mathcal O$ is cofibrant in the absolute case. If the $V_\beta$ are sequences of free graded modules with trivial differential, then $\mathcal O_0\r \mathcal O$ is a relative cell complex with respect to the standard set of generating cofibrations in the model category of DG-operads, see \cite[Theorem 2.3.11]{hmc} and \cite[proof of Theorem 1.1]{htnso}.

Strong pseudo-cylinders are closed under filtered colimits, for both chain complexes and DG-operads (filtered colimits of DG-operads are computed in the underlying sequences of chain complexes). 

\begin{defn}\label{cspc}
	The \emph{canonical strong pseudo-cylinder} of a relatively pseudo-cellular DG-operad $\mathcal O$
		\[
		\xymatrix{\mathcal O
			\ar@<.5ex>[r]^-{i_0}
			\ar@<-.5ex>[r]_-{i_1}
			&
			I\mathcal O\ar[r]^-{p}\ar@(ul,ur)^{h_{\mathcal O}}
			&
			\mathcal O}
		\]
	is defined by induction on the length $\alpha$ in the following way. If $\alpha=0$, we take the \emph{trivial strong pseudo-cylinder}, 
	$$\xymatrix{\mathcal O 
		\ar@<.5ex>[r]^-{1}
		\ar@<-.5ex>[r]_-{1}
		&
		\mathcal O \ar[r]^-{1}\ar@(ul,ur)^{0}
		&
		\mathcal O.
	}$$
	If $\alpha$ is a limit ordinal, we define the canonical strong pseudo-cylinder of $\mathcal O$ as the colimit of the canonical strong pseudo-cylinders of $\mathcal O_\beta$, $\beta<\alpha$.
	If $\alpha=\beta+1$, the canonical strong pseudo-cylinder of $\mathcal O=\mathcal O_{\beta}\amalg_{\partial_\beta}\mathcal F(V_\beta)$ is defined by applying Theorem \ref{pasting2} to the canonical strong pseudo-cylinder of $\mathcal O_\beta$,
	$$\xymatrix@C=50pt{\mathcal O_\beta\amalg_{\partial_{\beta}}\mathcal F(V_\beta)
		\ar@<.5ex>[r]^-{i_0=i_0\amalg\mathcal F(i_0)}
		\ar@<-.5ex>[r]_-{i_1=i_1\amalg\mathcal F(i_1)}
		&
		I\mathcal O_\beta\amalg_{\partial_{\beta,I}}\mathcal F(IV_\beta)\ar[r]^-{p=p\amalg\mathcal F(p)}\ar@(ul,ur)^{h_{\mathcal O}=h_{\partial_\beta}}
		&
		\mathcal O_{\beta}\amalg_{\partial_{\beta}}\mathcal F(V_\beta),
	}$$
	in particular, $I\mathcal O=I\mathcal O_\beta\amalg_{\partial_{\beta,I}}\mathcal F(IV_\beta)$.
\end{defn}

The DG-operad $I\mathcal O$ is relatively pseudo-cellular of the same length as $\mathcal O$ and $(I\mathcal O)_0=\mathcal O_0$. In particular, if $\mathcal O$ is absolutely pseudo-cellular, then so is $I\mathcal O$. 

If the complexes of the sequences $V_\beta$, $\beta<\alpha$, are cofibrant, the canonical strong pseudo-cylinder of $\mathcal O$ is a relative cylinder in the model theoretic sense, i.e.~$(i_0,i_1)\colon\mathcal O\cup_{\mathcal O_0}\mathcal O\r I\mathcal O$ is a cofibration. In particular, if $\mathcal O$ is absolutely pseudo-cellular, its canonical strong pseudo-cylinder is a strong cylinder.

As a graded operad, any relatively pseudo-cellular DG-operad of length $\alpha$ is $\mathcal O=\mathcal O_0\amalg\mathcal F(V)$, $V=\bigoplus_{\beta<\alpha}V_{\beta}$, and the differential on the free part is determined by the equations $d(x)=d_{V_\beta}(x)+\partial_\beta(x)$, $x\in V_\beta$. The canonical strong pseudo-cylinder, as a graded operad, is $I\mathcal O=\mathcal O_0\amalg\mathcal F(IV)$. On elements of the form $i_0(x)$ and $i_1(x)$, $x\in V_\beta$, the differential is simply given by \[di_j(x)=i_jd(x)=i_jd_{V_\beta}(x)+i_j\partial_\beta(x),\quad j=0,1,\] since $i_0, i_1\colon\mathcal O\r I\mathcal O$ are DG-operad maps. On elements of the form $\sigma x$, $x\in V_\beta$, the differential depends on the involved inductive definition of the chain homotopy 
\begin{equation}\label{dsigma2}
	d\sigma(x)=i_0(x)-i_1(x)-h_{\mathcal O_\beta}i_1\partial_\beta(x),
\end{equation}
see \eqref{dsigma1}. The chain homotopy is easy on $\mathcal O_0$ and on generators of the free part, as a corollary of Lemma \ref{computa0}.

\begin{cor}\label{uno}
	If $\mathcal O$ is a relatively pseudo-cellular DG-operad of length $\alpha$, $x\in\mathcal O_0$, and $y\in IV_\beta$ for some $\beta<\alpha$, then 
	\begin{align*}
	h_{\mathcal O}(x)&=0,&h_{\mathcal O}(y)&=h_I(y).
	\end{align*}
	In particular, $h_{\mathcal O}(\id{})=0$.
\end{cor}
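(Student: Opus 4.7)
The plan is to proceed by transfinite induction on the length $\alpha$ of the pseudo-cellular filtration of $\mathcal{O}$, with Lemma \ref{computa0} doing essentially all of the work at each successor step. The whole statement will then reduce to unwinding how $\mathcal{O}_0$ and the various $IV_\beta$ sit inside $I\mathcal{O}$, together with the inductive compatibility of the chain homotopies along the filtration.

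For the base case $\alpha=0$ the cylinder is the trivial one and $h_\mathcal{O}=0$, so both assertions hold (the second vacuously). For a limit ordinal $\alpha$, the cylinder $I\mathcal{O}$ and its chain homotopy are defined as filtered colimits of the $(I\mathcal{O}_\gamma,h_{\mathcal{O}_\gamma})$, $\gamma<\alpha$; any $x\in\mathcal{O}_0$ lies in every $\mathcal{O}_\gamma$, and any $y\in IV_\beta$ lies in $I\mathcal{O}_{\beta+1}$, so both equalities pass to the colimit immediately from the inductive hypothesis.

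The successor step $\alpha=\beta+1$ is where Lemma \ref{computa0} enters. Here $I\mathcal{O}=I\mathcal{O}_\beta\amalg_{\partial_{\beta,I}}\mathcal{F}(IV_\beta)$ is the canonical induced strong pseudo-cylinder of the twisted coproduct, built from the strong pseudo-cylinder $I\mathcal{O}_\beta$ of $\mathcal{O}_\beta$. Lemma \ref{computa0} tells us precisely that $h_\mathcal{O}(z)=h_{\mathcal{O}_\beta}(z)$ for every $z\in I\mathcal{O}_\beta$ and $h_\mathcal{O}(y)=h_I(y)$ for every $y\in IV_\beta$. Since $\mathcal{O}_0\subset\mathcal{O}_\beta$ and, via $i_0\colon\mathcal{O}_\beta\r I\mathcal{O}_\beta$, any $x\in\mathcal{O}_0$ is a legal input for $h_{\mathcal{O}_\beta}$, the inductive hypothesis applied to the pseudo-cellular filtration $\{\mathcal{O}_\gamma\}_{\gamma\leq\beta}$ gives $h_{\mathcal{O}_\beta}(x)=0$ and hence $h_\mathcal{O}(x)=0$. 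Similarly, for $y\in IV_\gamma$ with $\gamma<\beta$, the element $y$ already lives in $I\mathcal{O}_\beta$, so $h_\mathcal{O}(y)=h_{\mathcal{O}_\beta}(y)=h_I(y)$ by induction; the remaining case $\gamma=\beta$ is Lemma \ref{computa0} directly.

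There is no serious obstacle here; the only thing to be careful about is the bookkeeping of the inclusions $\mathcal{O}_0\subset\mathcal{O}_\gamma$ and $IV_\gamma\subset I\mathcal{O}_{\gamma+1}\subset I\mathcal{O}_\beta$ along the filtration, which is what makes Lemma \ref{computa0} applicable at each successor stage. Finally, since the identity $\id{}$ belongs to $\mathcal{O}_0$ (any DG-operad in the filtration contains the operadic unit), the particular statement $h_\mathcal{O}(\id{})=0$ is just a special case of the first equality.
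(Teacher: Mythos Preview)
Your proof is correct and is exactly the transfinite induction argument that the paper leaves implicit by simply stating the result as a corollary of Lemma \ref{computa0}. The only minor quibble is that your appeal to $i_0\colon\mathcal{O}_\beta\r I\mathcal{O}_\beta$ for embedding $\mathcal{O}_0$ is unnecessary: since the strong pseudo-cylinder on $\mathcal{O}_0$ is trivial, $\mathcal{O}_0=(I\mathcal{O}_\beta)_0$ sits in $I\mathcal{O}_\beta$ unambiguously, and Lemma \ref{computa0} applies directly.
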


The underlying graded operad of an absolutely pseudo-cellular DG-operad $\mathcal O$ is free, $\mathcal O=\mathcal F(V)$, $V=\bigoplus_{\beta<\alpha}V_{\beta}$, since $\mathcal{O}_0$ is initial and hence it dissapears in coproducts. In order to prove a useful vanishing condition for the chain homotopy, we introduce the following set of linear generators of $I\mathcal O=\mathcal F(V)$ (as a sequence of graded modules).

\begin{defn}
	Let $\mathcal O$ be an absolutely pseudo-cellular DG-operad of length $\alpha$ as above. A \emph{standard labeled tree} in $I\mathcal O$ is a labeled tree such that each label is 
	of the form $i_0(x)$, $i_1(x)$ or $\sigma(x)$, $x\in V_\beta$, $\beta<\alpha$. %The \emph{height} of a standard labeled tree is the maximum ordinal $\gamma<\alpha$ such that there is a label in $IV_\gamma$.
\end{defn}

\begin{lem}\label{vanishing}
	Let $\mathcal O$ be an absolutely pseudo-cellular DG-operad and $t\in I\mathcal O$ a standard labeled tree satisfying one of the two following conditions:	
	\begin{enumerate}
		\item The bottommost label is $\sigma(x)$.
		\[\begin{array}{c}\begin{tikzpicture}[level distance = 7mm, sibling distance = 7mm ]
		\tikzstyle{level 3}=[level distance = 3mm]
		\node {} [grow'=up]
		child{[fill] circle (2pt)
			child{{} node [above] {$\vdots$}}
			child{node {$\cdots$} edge from parent [draw=none]}
			child{{} node [above] {$\vdots$}}
			node [below right] {$\scriptstyle \sigma(x)$}};
		\end{tikzpicture}\end{array}\]
		\item The bottommost label is $i_0(x)$ 
		\[\begin{tikzpicture}[level distance = 7mm, sibling distance = 7mm ]
		\tikzstyle{level 3}=[level distance = 3mm]
		\node {} [grow'=up]
		child{[fill] circle (2pt)
			child{{} node [above] {$\vdots$}}
			child{node {$\cdots$} edge from parent [draw=none]}
			child{{} node [above] {$\vdots$}}
			node [below right] {$\scriptstyle i_0(x)$}};
		\end{tikzpicture}\]
		and $t$ does not contain any \emph{forbidden edge}, i.e.~an inner edge with bottom label $i_0(y)$ and top label $i_1(z)$.
			\[
	\begin{tikzpicture}[level distance = 10mm, sibling distance = 4.8mm ]
	\tikzstyle{level 1}=[level distance = 7mm]
	\tikzstyle{level 2}=[level distance = 10mm]
	\tikzstyle{level 3}=[level distance = 4mm]
	\node [inner sep =0pt] {$\vdots$} [grow'=up]
	child{[fill] circle (2pt)
		child{{} node [above] {$\vdots$}}
		child{node {$\cdots$} edge from parent [draw=none]}
		child{node {$\cdots$} edge from parent [draw=none]}
		child{[fill] circle (2pt)
					child{{} node [above] {$\vdots$}}
					child{node {$\cdots$} edge from parent [draw=none]}
					child{{} node [above] {$\vdots$}}
					node [below left] {$\scriptstyle i_1(z)$}}
		child{node {$\cdots$} edge from parent [draw=none]}
		child{{} node [above] {$\vdots$}}
		node [below right] {$\scriptstyle i_0(y)$}
		edge from parent [solid, thin]};
%	\draw (-2,1) node {\scriptsize This cannot happen:};
	\end{tikzpicture}\]
	\end{enumerate}
	Then, $h_{\mathcal O}(t)=0$.
\end{lem}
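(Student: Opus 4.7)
The plan is to proceed by transfinite induction on the length $\alpha$ of $\mathcal O$, with a secondary induction on the number of inner vertices of $t$. The case $\alpha=0$ is vacuous (no $V_\beta$'s appear, so no nontrivial standard labeled trees exist), and the limit case is immediate from continuity of the construction: any standard labeled tree uses finitely many labels, hence lies in some $I\mathcal O_\gamma$ with $\gamma<\alpha$, where $h_{\mathcal O}$ restricts to $h_{\mathcal O_\gamma}$. For the successor step $\alpha=\beta+1$, Theorem~\ref{pasting2} combined with Remark~\ref{hrec} yields
\[ h_{\mathcal O} \;=\; h_{\mathcal O_\beta,V_\beta}\;+\;h_{\mathcal O}\,\partial_{\beta,I}\,h_{\mathcal O_\beta,V_\beta}, \]
so it suffices to prove $h_{\mathcal O_\beta,V_\beta}(t)=0$, since then both summands vanish.

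The next step is to repackage $t$ in the coproduct form $I\mathcal O_\beta\amalg_{\partial_{\beta,I}}\mathcal F(IV_\beta)$: labels from $IV_\beta$ sit at even levels, while runs of labels from $IV_\gamma$ with $\gamma<\beta$ collapse into vertices at odd levels whose $I\mathcal O_\beta$-value is itself a labeled tree, with auxiliary $\id{}$-insertions added whenever level parity demands. One then expands via the extended Leibniz-type formula from Remarks~\ref{hv} and~\ref{formulae}, obtaining $h^{\ast}(z_0)(z_1,\dots,z_n)+\sum_i\pm\,i_0p(z_0)(\dots,h_{\mathcal O_\beta,V_\beta}(z_i),\dots)$, where $z_0$ is the bottommost coproduct vertex and $h^{\ast}$ is $h_I$ or $h_{\mathcal O_\beta}$ according to its type. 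In case~(1), the bottommost $t$-label $\sigma(x)$ is annihilated by both $h_I$ and $p$ (Definition~\ref{primen}); when $x\in V_\gamma$ with $\gamma<\beta$, the vertex $z_0\in I\mathcal O_\beta$ is itself a standard labeled tree (in the $I\mathcal O_\beta$-sense) with bottommost label $\sigma(x)$, so the outer inductive hypothesis gives $h_{\mathcal O_\beta}(z_0)=0$, and likewise $p(z_0)=0$ because $p$ is a DG-operad map and the bottommost tensor factor goes to $0$. Hence every term of the expansion vanishes. In case~(2), $h_I(i_0(x))=0$ (and analogously $h_{\mathcal O_\beta}(z_0)=0$ by the outer inductive hypothesis in the mixed-$\gamma$ sub-case) kills the first term, while $i_0p(i_0(x))=i_0(x)$; what remains is zero provided $h_{\mathcal O_\beta,V_\beta}(z_i)=0$ on each sub-tree $z_i$. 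The no-forbidden-edge hypothesis on $t$ descends to each $z_i$ and forces its bottommost label to be either $\id{}$ (Corollary~\ref{uno}), some $\sigma(\cdot)$ (case~(1)), or another $i_0(\cdot)$ without forbidden edges (the inner inductive hypothesis on size).

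The main obstacle will be the administrative one of translating between the standard labeled tree $t$ in the free operad $\mathcal F(IV)$ and its layered presentation inside $I\mathcal O_\beta\amalg_{\partial_{\beta,I}}\mathcal F(IV_\beta)$, where vertices alternate by level parity and auxiliary $\id{}$-insertions may be forced. One has to verify that conditions~(1) and~(2) transfer correctly under this repackaging and are inherited by sub-trees, that forbidden edges remain forbidden after restricting to a sub-tree, and that $p$ of a labeled tree in $I\mathcal O_\beta$ whose bottommost label is $\sigma(\cdot)$ really does vanish at the tensor level. Once this bookkeeping is set up, the argument reduces to the three elementary identities $h_I\sigma=0$, $p\sigma=0$, and $h_Ii_0=0$, together with the two inductive hypotheses.
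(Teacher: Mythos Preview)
Your overall architecture matches the paper's exactly: transfinite induction on $\alpha$, the successor step reduced via Remark~\ref{hrec} to proving $h_{\mathcal O_\beta,V_\beta}(t)=0$, and then an inner induction on the number of inner vertices using the expansion from Remark~\ref{formulae}. Case~(1) and the pure sub-case of~(2) (where $x\in V_\beta$) are handled correctly.

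There is, however, a genuine gap in the mixed sub-case of~(2), i.e.\ when the root label $i_0(x)$ has $x\in V_\gamma$ for some $\gamma<\beta$. You assert that the no-forbidden-edge hypothesis forces the bottommost label of each sub-tree $z_i$ to be $\id{}$, $\sigma(\cdot)$, or $i_0(\cdot)$. This is false. The bottommost label $y_{i,0}$ of $z_i$ lies in $IV_\beta$, while its parent vertex sits inside $z_0\in I\mathcal O_\beta$; that parent may carry a label $\sigma(\cdot)$ or $i_1(\cdot)$, and in either case $y_{i,0}=i_1(y_i')$ creates no forbidden edge in $t$. Such a $z_i$ satisfies neither~(1) nor~(2), so your inner induction does not apply and you cannot conclude $h_{\mathcal O_\beta,V_\beta}(z_i)=0$.

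The paper fills this gap with a disjunctive argument: the $i$-th summand in the expansion is $\pm\, i_0p(z_0)(\dots,h_{\mathcal O_\beta,V_\beta}(z_i),\dots)$, so it vanishes if \emph{either} $h_{\mathcal O_\beta,V_\beta}(z_i)=0$ \emph{or} $p(z_0)=0$. When $y_{i,0}=i_1(y_i')$, one shows instead that $z_0$ must contain a $\sigma$-label (since $z_0$ has root $i_0(x)$, no forbidden edges, and a path ending at a vertex adjacent to an $i_1$-label; tracing that path from the root, the first non-$i_0$ label cannot be $i_1$), and therefore $p(z_0)=0$ because $p$ acts tensor-factor-wise and $p\sigma=0$. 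You already identified this $p$-vanishing mechanism in your last paragraph, but only deployed it in case~(1); it is also essential here.
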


\begin{proof}
	By induction on the length $\alpha$. If $\alpha=0$, then the chain homotopy is $h_{\mathcal O}=0$, so there is nothing to check. If $\alpha$ is a limit ordinal, then $h_{\mathcal O}(t)=h_{\mathcal O_{\beta}}(t)$ for some $\beta<\alpha$, and $h_{\mathcal O_{\beta}}(t)=0$ by induction hypothesis. 
	
	Suppose $\alpha=\beta+1$. We prove, by induction on the number of inner vertices, that $h_{\mathcal O_\beta,V_\beta}(t)=0$. This clearly suffices.
	
	Suppose $t$ satisfies (1). If $x\in V_\beta$, we can write $t=\sigma(x)(x_1,\dots, x_n)$. The formula in the second paragraph of Remark \ref{formulae} yields
	\begin{align*}
	h_{\mathcal O_\beta,V_\beta}(\sigma(x)(x_1,\dots, x_n))={}&
	h_I(\sigma(x))(x_1,\dots, x_n)\\&\pm\text{terms of the form }i_0p(\sigma(x))(\dots, h_{\mathcal O_\beta,V_\beta}(x_i),\dots).
	\end{align*}
	All summands vanish since $h_I\sigma=0$ and $p\sigma=0$. 
	
	Otherwise, we can write $t=x_0(x_1,\dots,x_n)$. Here, $x_0\in I\mathcal O_\beta$ is a standard labeled tree with the same bottommost label as $t$, $\sigma(x)$. Hence, $h_{\mathcal O_{\beta}}(x_0)=0$ by the first induction hypothesis, and $p(x_0)=0$ since $p\sigma=0$. The formula in the third paragraph of Remark \ref{formulae} yields
	\begin{align}\label{interno}
	h_{\mathcal O_\beta,V_\beta}(x_0(x_1,\dots, x_n))={}&
	h_{\mathcal O_{\beta}}(x_0)(x_1,\dots, x_n)\\
	\nonumber
	&\hspace{-10pt}\pm\text{terms of the form }i_0p(x_0)(\dots, h_{\mathcal O_\beta,V_\beta}(x_i),\dots),
	\end{align}
	which therefore vanishes.
	
	Suppose now that $t$ satisfies (2). If $x\in V_\beta$, we can write the standard labeled tree as $i_0(x)(x_1,\dots, x_n)$, where the elements $x_1,\dots,x_n$ are standard labeled trees with less inner vertices than $t$. We will check that $h_{\mathcal O_\beta,V_\beta}(x_i)=0$, $1\leq i\leq n$. Hence, the formula in the second paragraph of Remark \ref{formulae} yields
	\begin{align*}
	h_{\mathcal O_\beta,V_\beta}(i_0(x)(x_1,\dots, x_n))={}&
	h_Ii_0 (x)(x_1,\dots, x_n)\\&\hspace{-10pt}\pm\text{terms of the form }i_0pi_0(x)(\dots, h_{\mathcal O_\beta,V_\beta}(x_i),\dots),
	\end{align*}
	which vanishes since $h_Ii_0=0$.

	If the standard labeled tree $x_i$ has no inner vertices, then $x_i=\id{}$ and $h_{\mathcal O_\beta,V_\beta}(\id{})=h_0(\id{})=0$. If it has inner vertices, let us look at the bottommost label. It cannot be $i_1(y)$, since it is adjancent to $i_0(x)$ in $t$, and we are assuming that $t$ does not contain forbidden edges. Hence it is $\sigma(y)$ or $i_0(y)$. Not containing a forbidden edge is a property inherited by subtrees. Hence, $x_i$ does not contain forbidden edges. Therefore, $x_i$ satisfies (1) or (2) and has less inner vertices than $t$, so $h_{\mathcal O_\beta,V_\beta}(x_i)=0$ by the second induction hypothesis. 
	
	If $x\in V_\gamma$ for some $\gamma<\beta$, then we can write the standard labeled tree as $x_0(x_1,\dots,x_n)$, where $x_0\in I\mathcal O_\beta$ is a standard labeled tree and, for $1\leq i\leq n$, either $x_i=\id{}$ or $x_i=y_{i,0}(y_{i,1},\dots, y_{i,p_i})$ is a standard labeled tree with less inner vertices than $t$ where $y_{i,0}$ is $i_0(y_i')$, $\sigma(y_i')$ or $i_1(y_i')$ for some $y_i'\in V_\beta$.
	Formula \eqref{interno} also applies in this case. Hence, it sufficies to prove that $h_{\mathcal O_{\beta}}(x_0)=0$ and either $p(x_0)=0$ or $h_{\mathcal O_\beta,V_\beta}(x_i)=0$ for all $1\leq i\leq n$. 
	
	The bottommost label of $x_0$ is the same as in $t$, $i_0(x)$, and $x_0$ cannot contain a forbidded edge, since it is a subtree of $t$. Hence $h_{\mathcal O_{\beta}}(x_0)=0$ by the first induction hypothesis. For $1\leq i\leq n$, if $x_i=\id{}$ then we know that $h_{\mathcal O_\beta,V_\beta}(x_i)=0$. 	Otherwise, let us argue with the possible values of $y_{i,0}$. Being a subtree of $t$, $x_i$ does not contain any forbidden edge. Hence, if $y_{i,0}$ is $i_0(y_i')$ or $\sigma(y_i')$, $h_{\mathcal O_\beta,V_\beta}(x_i)=0$ by the second induction hypothesis, since $x_i$ has less inner vertices than $t$. The bottommost vertex of $x_i$ is adjacent to a vertex in $x_0$. Therefore, if $y_{i,0}=i_1(y_i')$ then the adjacent vertex in $x_0$ is $\sigma(z)$, otherwise $t$ would contain a forbidden edge. In this case $p(x_0)=0$ since $p\sigma=0$.
\end{proof}

%
%\[\xymatrix{
%	\mathcal O\amalg\mathcal O\ar[r]^-{(f_{\mathcal O},f'_{\mathcal O})}\ar[d]_{(i_0,i_1)}&\mathcal E\ar[d]^q\\
%	\mathcal P\ar[r]_-{F_{\mathcal P}}\ar[ru]^{G_{\mathcal P}}&\mathcal B
%	}\]
%	
%\[\xymatrix{
%		(\mathcal O\amalg_{\partial}\mathcal F(V))\amalg(\mathcal O\amalg_{\partial}\mathcal F(V))\ar[r]^-{(f,f')}\ar[d]_{(i_0,i_1)}&\mathcal E\ar[d]^q\\
%		\mathcal P\amalg_{\partial_I}\mathcal F(IV)\ar[r]_-{F}\ar@{-->}[ru]^{G}&\mathcal B
%	}\]
%
%$G_V\colon V\r\mathcal E$ of degree $+1$ such that $d_{\mathcal E}G_{IV}=G_{IV}d_{IV}+f_{\mathcal P}\partial_I$
%
%\[d_{\mathcal E}\left(\begin{smallmatrix}
%f_V&G_V&f'_V
%\end{smallmatrix}\right)=\left(\begin{smallmatrix}
%f_V&G_V&f'_V
%\end{smallmatrix}\right)\left(\begin{smallmatrix}
%d_V&1&0\\0&-d_V&0\\0&-1&d_V
%\end{smallmatrix}\right)+G_{\mathcal P}\left(\begin{smallmatrix}
%i_0\partial&-hi_1\partial&i_1\partial
%\end{smallmatrix}\right)\]
%
%\[
%d_{\mathcal E}G_V=f_V-G_Vd_V-f_V'-G_{\mathcal P}hi_1\partial
%\]

\section{Examples}\label{examples}

We start this section on examples by illustrating how our canonical strong cylinder construction works on the most widely used cellular DG-operad.

\begin{defn}
	The \emph{$A$-infinity DG-operad} $\mathcal A_{\infty}$ is freely generated as a graded operad by
	$$\mu_{n}\in\mathcal A_{\infty}(n)_{n-2},\qquad n\geq 2,$$
	with differential defined by
	\begin{equation*}%\label{dm}
	d(\mu_{n})=\sum_{\substack{
			p+q=n+1\\
			1\leq i\leq p}}
	(-1)^{p-i+q(i-1)}\mu_{p}\circ_i\mu_{q}.
	\end{equation*}
\end{defn}

Here we use the sign conventions in \cite{slaic}, but we should point out that Lef\`evre-Hasegawa  uses cohomological grading and, modulo this, an $A$-infinity algebra $(X,m_1,m_2,\dots,m_n,\dots)$ in his sense is an $\mathcal A_\infty$-algebra structure on the chain complex $(X,-m_1)$. This DG-operad is cellular. The following result computes its canonical strong cylinder.

\begin{thm}\label{ainfcyl}
	The canonical strong cylinder of the $A$-infinity DG-operad is the DG-operad $I\mathcal A_\infty$, freely generated as a graded operad by
	\[
	i_0\mu_n,i_1\mu_n\in I\mathcal A_\infty(n)_{n-2},\qquad \sigma\mu_n\in I\mathcal A_\infty(n)_{n-1},\qquad n\geq 2,
	\]
	with differential determined by	the fact that $i_0,i_1\colon\mathcal A_\infty\r I\mathcal A_\infty$ are DG-operad maps and by the following formula, $n\geq 2$,
	\begin{align*}
	d(\sigma\mu_{n})={}&i_0\mu_n-i_1\mu_n-\sum_{\substack{p+q=n+1\\1\leq j\leq p}}(-1)^{p-j+q(j-1)}\sigma\mu_{p}\circ_j i_{1}\mu_{q}\\&+\hspace{-30pt}\sum_{\substack{1\leq s\leq r\\j_0+t_{1}+j_1+\cdots+t_s+j_s=n}}\hspace{-30pt}
	(-1)^{\sum\limits_{k=1}^s(t_k-1)(j_0+\sum\limits_{l=1}^{k-1} (t_l+j_l))}
	i_{0}\mu_{r}(\st{j_0}\dots ,\sigma\mu_{t_{1}},\st{j_1}\dots,\sigma\mu_{t_{2}},\dots, \sigma\mu_{t_{s}},\st{j_s}\dots).
	\end{align*}
\end{thm}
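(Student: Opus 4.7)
The plan is to (i) identify $\mathcal A_\infty$ as absolutely pseudo-cellular so the underlying graded structure of $I\mathcal A_\infty$ comes directly from Definition \ref{cspc}, and then (ii) compute $d\sigma\mu_n$ inductively via formula \eqref{dsigma2} together with Remarks \ref{hv}, \ref{hrec}, and \ref{formulae}, truncated by Lemma \ref{vanishing}.

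\emph{Cellular structure.} Since $d\mu_n=\partial_{n-2}\mu_n$ is a linear combination of compositions $\mu_p\circ_i\mu_q$ with $2\leq p,q<n$, the operad $\mathcal A_\infty$ is absolutely pseudo-cellular of length $\omega$, with $\mathcal O_k$ the suboperad generated by $\mu_2,\dots,\mu_{k+1}$ and $V_{k-1}=\Bbbk\mu_{k+1}$, so that $\mathcal O_k=\mathcal O_{k-1}\amalg_{\partial_{k-1}}\mathcal F(V_{k-1})$. Definition \ref{cspc} then yields $I\mathcal A_\infty$ as freely generated, as a graded operad, by $i_0\mu_n,i_1\mu_n$ in degree $n-2$ and $\sigma\mu_n$ in degree $n-1$, as claimed. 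The differentials of $i_0\mu_n$ and $i_1\mu_n$ follow at once from the fact that $i_0$ and $i_1$ are DG-operad maps, applied to $d\mu_n$.

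\emph{Reduction and induction.} By \eqref{dsigma2}, $d\sigma\mu_n=i_0\mu_n-i_1\mu_n-h_{\mathcal O_{n-2}}(i_1\partial_{n-2}\mu_n)$, so, denoting the two explicit sums in the statement by $-A_n$ and $+B_n$, the problem reduces to proving the identity $h_{\mathcal O_{n-2}}(i_1\partial_{n-2}\mu_n)=A_n-B_n$. I would proceed by induction on $n$; the base $n=2$ is trivial since $\partial_0\mu_2=0$. For the inductive step, rewrite $h_{\mathcal O_{n-2}}=h_{\mathcal O_{n-3},V_{n-3}}+h_{\mathcal O_{n-2}}\partial_{n-3,I}h_{\mathcal O_{n-3},V_{n-3}}$ as in Remark \ref{hrec} and iterate. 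The first application of $h_{\mathcal O_{n-3},V_{n-3}}$ to each summand $i_1\mu_p\circ_i i_1\mu_q$, using the explicit formulas of Remarks \ref{hv} and \ref{formulae}, produces $\sigma\mu_p\circ_i i_1\mu_q$ (contributing to $A_n$) together with $(-1)^{p-2}i_0\mu_p\circ_i\sigma\mu_q$ (the $s=1$ piece of $-B_n$). Each subsequent iteration of $\partial_I h$ acts on a surviving $\sigma\mu_t$-label via the inductive description $\partial_I\sigma\mu_t=-(A_t-B_t)$ provided by the inductive hypothesis and \eqref{dsigma2}, and the next $h_V$ promotes one more $i_1\mu_{t'}$ to $\sigma\mu_{t'}$, producing the higher-$s$ contributions to $-B_n$ one $\sigma$-label at a time.

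\emph{Truncation and sign matching.} The structural input that makes the iteration terminate cleanly is Lemma \ref{vanishing}: only two-level standard labeled trees can survive, namely the $\sigma\mu_p\circ_j i_1\mu_q$ (bottommost $\sigma$, terminal for $h$) and the $i_0\mu_r(\dots,\sigma\mu_{t_k},\dots)$ (bottommost $i_0$ with no forbidden edge, again terminal). Any other tree produced during the iteration lies in $\ker h$, or else cancels against a sibling term coming from the two derivation contributions to $\partial_I$ on an edge. The hardest part will be the Koszul sign bookkeeping: one must combine the signs from (a) the coefficients $(-1)^{p-i+q(i-1)}$ in $\partial_{n-2}\mu_n$, (b) the Koszul signs in \eqref{hder}, (c) the derivation signs for extending $\partial_I$ to the twisted coproduct (Remark \ref{formulae2}), and (d) the inductive signs already inside $A_p$ and $B_p$, and verify that the cumulative sign attached to the surviving tree labeled by the tuple $(r,s,j_0,t_1,j_1,\dots,t_s,j_s)$ is exactly $(-1)^{\sum_{k=1}^{s}(t_k-1)(j_0+\sum_{l<k}(t_l+j_l))}$. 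I would organize this by a subsidiary induction on $s$, showing that each iteration of $\partial_I h_V$ contributes precisely the new factor $(t_s-1)(j_0+\sum_{l<s}(t_l+j_l))$ to the exponent.
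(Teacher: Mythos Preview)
Your overall architecture---use \eqref{dsigma2} to reduce to computing $h_{\mathcal O_{n-2}}i_1\partial_{n-2}\mu_n$, then iterate $h'=h+h'\partial h$ from Remark~\ref{hrec} and truncate via Lemma~\ref{vanishing}---is exactly what the paper does. But two substantive ingredients are missing from your sketch, and the second one is a genuine gap.

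First, a simplification you did not exploit: the paper passes to the operadic suspension $\Lambda\mathcal A_\infty$ (Definition~\ref{operadicsusp}, Remark~\ref{opbrace2}) and the brace formalism (Definition~\ref{brace}, Remarks~\ref{opbrace} and~\ref{braceliketree}), in which $d(\mu_n)=\sum_{p+q=n+1}\mu_p\{\mu_q\}$ with \emph{no signs}. The target formula becomes the sign-free identity in Lemma~\ref{tech}, and the Koszul exponent in the statement is then recovered automatically by undoing $\Lambda$. Your proposed subsidiary induction on $s$ to track four interacting sign sources would work in principle, but is far more painful than necessary.

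Second, and more seriously, your sentence ``any other tree produced during the iteration lies in $\ker h$, or else cancels against a sibling term'' is where the real work hides, and it is not correct as stated. When you apply $\partial_I$ to an $A$-type term $\sigma\mu_p\{i_1\mu_q\}$ and expand via the brace relation, you obtain trees of the form
\[
i_0\mu_r(\dots,\sigma\mu_{t_1},\dots,\sigma\mu_{t_{j-1}},\dots,i_1\mu_q,\dots,\sigma\mu_{t_j},\dots,\sigma\mu_{t_s},\dots),
\]
which have a \emph{forbidden edge} ($i_0$ below, $i_1$ above) and are therefore \emph{not} killed by Lemma~\ref{vanishing}. Nor do they cancel: they are precisely the source of the $s\geq 2$ contributions to $B_n$. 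The paper handles this by proving, simultaneously with the main formula and by the same induction on $n$, an auxiliary statement (the first block of equations in Lemma~\ref{tech}) computing $h$ on exactly these trees: the answer is $-i_0\mu_r(\dots,\sigma\mu_q,\dots)$ under a specific combinatorial condition on the position of $i_1\mu_q$ relative to the $t_i$'s and $r$ (condition~\eqref{raro}), and $0$ otherwise. One then needs a bijection---carried out explicitly as the ``$(\star)$'' computation at the end of the proof of Lemma~\ref{tech}---showing that the non-vanishing cases enumerate each tuple $(r;t_1,\dots,t_{\bar s})$ with $\bar s\geq 2$ exactly once. Without isolating this auxiliary formula and the accompanying bijection, your iteration does not close.
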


This theorem follows from \eqref{dsigma2} and the last formula in Lemma \ref{tech}. 

\begin{cor}
	Given a chain complex $X$ and two maps to its endomorphism operad $\varphi,\varphi'\colon\mathcal A_\infty\r\mathcal E(X)$, see Remark \ref{opbrace2} below, which correspond with two $A$-infinity structures on $X$, 
	$(X,\{m_n\}_{n\geq 2})$ and $(X,\{m_n'\}_{n\geq 2})$, respectively, a homotopy $H\colon I\mathcal A_\infty\r\mathcal E(X)$ between them, $Hi_0=\varphi'$, $Hi_1=\varphi$, is the same as an $A$-infinity morphism \cite[D\'efinition 1.2.1.2]{slaic} \[\{f_n\}_{n\geq 1}\colon (X,\{m_n\}_{n\geq 2})\To (X,\{m_n'\}_{n\geq 2})\] whose linear part is the identity in $X$, $f_1=1_X$.
\end{cor}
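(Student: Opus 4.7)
The plan is to exploit that, by Theorem~\ref{ainfcyl}, $I\mathcal A_\infty$ is free as a graded operad on the generators $i_0\mu_n$, $i_1\mu_n$, $\sigma\mu_n$ ($n\geq 2$). A DG-operad map $H\colon I\mathcal A_\infty\to\mathcal E(X)$ is therefore the same as a choice of images of these generators, subject to the single constraint that $H$ commute with the differential on each generator. The conditions $Hi_0=\varphi'$ and $Hi_1=\varphi$ force $H(i_0\mu_n)=m_n'$ and $H(i_1\mu_n)=m_n$, and compatibility of $H$ with $d$ on these generators is automatic since $\varphi$ and $\varphi'$ are DG-operad maps. The remaining freedom is a sequence of elements $f_n:=H(\sigma\mu_n)\in\mathcal E(X)(n)_{n-1}$ for $n\geq 2$; extending it by $f_1:=1_X$ produces a collection of morphisms of the correct degrees to be the components of an $A$-infinity morphism.

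The heart of the argument is to show that the remaining requirement $H(d\sigma\mu_n)=dH(\sigma\mu_n)=df_n$ is precisely the $A$-infinity morphism equation of \cite[D\'efinition 1.2.1.2]{slaic} for $\{f_n\}_{n\geq 1}$. Applying $H$ to the formula in Theorem~\ref{ainfcyl} yields a linear combination of $m_n'$, $m_n$, terms $f_p\circ_j m_q$ with $p,q\geq 2$, and terms $m_r'(\dots,f_{t_1},\dots,f_{t_s},\dots)$ with $r\geq 2$ and each $t_k\geq 2$ (where the unlabeled slots contain $j_l$ identity inputs). Interpreting those identity slots as $f_1=1_X$ insertions recasts the last summand as the full right-hand side of the $A$-infinity morphism equation, except for the two degenerate cases $k=1$ (which would give $m_1'(f_n)=-d_Xf_n$) and all arguments equal to $1_X$ (which would give $m_n'$). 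The latter cancels the explicit $m_n'$ appearing in $H(d\sigma\mu_n)$, while the former is absorbed, together with the internal differential $df_n$ in $\mathcal E(X)$, into the $r=1$, $s=n$ and $r=n$, $s=1$ summands of the left-hand side of the $A$-infinity morphism equation, using $m_1=-d_X$ and $f_1=1_X$. The explicit $-m_n$ and the sum over $f_p\circ_j m_q$ in $H(d\sigma\mu_n)$ provide the remaining summands of that left-hand side, so a global rearrangement gives exactly the desired equation.

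This establishes a bijection between DG-operad maps $H\colon I\mathcal A_\infty\to\mathcal E(X)$ with $Hi_0=\varphi'$, $Hi_1=\varphi$, and $A$-infinity morphisms $\{f_n\}_{n\geq 1}\colon (X,\{m_n\})\to(X,\{m_n'\})$ with $f_1=1_X$. The main obstacle is sign matching: one must verify that the Koszul signs in Theorem~\ref{ainfcyl}, together with those from the operadic Leibniz rule and from the differential of the endomorphism operad, reproduce the Lef\`evre-Hasegawa signs of \cite{slaic} under the identification $m_1=-d_X$. Since everything is linear and finitely generated in each arity, this reduces to a careful but routine bookkeeping check, analogous to the standard verification that $\mathcal A_\infty$-algebras coincide with $A$-infinity algebras in the sense of \cite{slaic}.
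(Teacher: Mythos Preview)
Your proposal is correct and follows the same approach as the paper: the correspondence $m_n'=H(i_0\mu_n)$, $m_n=H(i_1\mu_n)$, $f_n=H(\sigma\mu_n)$, extended by $f_1=1_X$. The paper's proof is in fact a single sentence recording exactly this dictionary, leaving the identification of $H(d\sigma\mu_n)=df_n$ with the Lef\`evre-Hasegawa equation implicit; your version spells out that identification and the role of the conventions $m_1=-d_X$, $f_1=1_X$ in more detail than the paper does, but the content is the same.
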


\begin{proof}
	The correspondence is simply given by $m_n'=\varphi'(\mu_n)=H(i_0\mu_n)$, $m_n=\varphi(\mu_n)=H(i_1\mu_n)$, and $f_n=H(\sigma\mu_n)$, $n\geq 2$. 
\end{proof}

In order to simplify computations, we use the formalism of operadic suspensions and brace algebras.

\begin{defn}\label{operadicsusp}
	Given a DG-operad $\mathcal O$, its \emph{operadic suspension} is the DG-operad $\Lambda \mathcal O$ such that $\Lambda \mathcal O(n)=\mathcal O(n)$ as plain modules, $n\geq 0$, with the following new grading
	\[\norm{x}=\abs{x}+1-\text{arity of }x.\]
	The differential is the same as in $\mathcal O$. Compositions in $\Lambda\mathcal O$, that we here denote by $\bullet_i$ in order to avoid confusion, are defined as follows, $x\in\mathcal O(p)$, $y\in\mathcal O(q)$,
	\[x\bullet_iy=(-1)^{\norm{y}(p-i)+\abs{y}(i-1)}x\circ_{i}y,\]
	and the identity is the same $\id{\Lambda \mathcal O}=\id{\mathcal O}$. 
\end{defn}

\begin{rem}\label{opbrace2}
	The functor $\Lambda$ is an an automorphism of the category of DG-operads. It preserves free operads, $\Lambda\mathcal F(V)=\mathcal F(\Lambda V)$, where $\Lambda V$ is defined as above, (twisted) coproducts, (relatively) pseudo-cellular DG-operads, (canonical) strong (pseudo-)cyl\-inders,  etc. It also preserves the homotopical structure (fibrations, cofibrations, and weak equivalences). 
	
	Recall that, given chain complexes $X$ and $Y$, the inner $\hom(X,Y)$ is the chain complex consisting of the modules $\hom(X,Y)_n$ of degree $n$ maps $f\colon X\r Y$ with differential $d(f)=d_Yf-(-1)^{\abs{f}}fd_X$, in particular chain maps $X\r Y$ are $0$-cycles in $\hom(X,Y)$. The endomorphism operad of a chain complex $X$ is \[\mathcal E(X)=\{\hom(X^{\otimes n},X)\}_{n\geq 0},\] 
	the operation $\circ_i$ is composition at the $i^{\text{th}}$ slot, the operadic identity  is the identity map $\id{\mathcal E(X)}=1_X$, and an $\mathcal O$-algebra structure on $X$ is an operad map $\mathcal O\r{\mathcal E(X)}$. There is an isomorphism of DG-operads
	\[\Lambda\mathcal E(X)\cong\mathcal E(\Sigma X),\]
	defined by mapping $f\colon X^{\otimes n}\r X$ to $(-1)^{\abs{f}}\sigma f(\sigma^{-1})^{\otimes n}\colon (\Sigma X)^{\otimes n}\r \Sigma X$, $n\geq 0$. Therefore, an $\mathcal O$-algebra structure on $X$ is the same as a $\Lambda \mathcal O$-algebra structure on $\Sigma X$.

	The \emph{Hadamard product} of two DG-operads $\mathcal O\otimes_{\mathrm H}\mathcal P$ is the DG-operad with $(\mathcal O\otimes_{\mathrm H}\mathcal P)(n)=\mathcal O(n)\otimes\mathcal P(n)$, $n\geq 0$, compositions
	\[(x_1\otimes x_2)\circ_i (y_1\otimes y_2)=(-1)^{\abs{x_2}\abs{y_1}}(x_1\circ_iy_1)\otimes (x_2\circ_iy_2),\]
	and identity $\id{\mathcal O\otimes_{\mathrm H}\mathcal P}=\id{\mathcal O}\otimes\id{\mathcal P}$.
	The operadic suspension $\Lambda \mathcal O$ can be naturally identified with  $\mathcal O\otimes_{\mathrm H}\mathcal E(\Sigma\Bbbk)$. The natural isomorphism
	\[\Lambda \mathcal O\cong \mathcal O\otimes_{\mathrm H}\mathcal E(\Sigma\Bbbk)\]
	maps $x\in\mathcal O(n)$ to $x\otimes(\sigma\varphi_n(\sigma^{-1})^{\otimes n})$, where $\varphi_n\colon \Bbbk^{\otimes n}\r\Bbbk$ is defined by $\varphi_n(1\otimes\cdots\otimes1)=1$.
	
%	Aritywise, $\mathcal E(\Sigma\Bbbk)(n)=\Bbbk\cdot\varphi_n$, where $\varphi_n\colon(\Sigma\Bbbk)^{\otimes n}\r\Sigma \Bbbk$ is the degree $1-n$ map defined by \[\varphi(\sigma 1\otimes\st{n}\cdots\otimes \sigma 1)=(-1)^{n}\sigma 1,\] in particular $\varphi_1=\id{}$ is the identity. Composition clearly satisfies \[\varphi_p\circ_i\varphi_q=(-1)^{qi+q+i}\varphi_{p+q-1},\qquad 1\leq i\leq p.\] 
	
%	In this paper, we use the operadic suspension since it allows to remove signs from the definition of the $A$-infinity operad and related DG-operads. More precisely, given a DG-operad $\mathcal O$, we will always endow $\bigoplus_{n\geq 0}\mathcal O(n)$ with the brace algebra structure coming from the operadic suspension $\mathcal O_{\Sigma}$ (not from $\mathcal O$) and Remark \ref{opbrace}. The formula for braces in terms of the operad structure of $\mathcal O$ is like in Remark \ref{opbrace} but with complicated signs, $x_{i}\in\mathcal O(p_{i})$, $0\leq i\leq n$,
%	\begin{align*}
%	x_{0}\{x_{1},\dots,x_{n}\}
%	&=\sum_{i_{0}+\cdots +i_{n}=m-n}(-1)^{\epsilon}x_{0}(\st{i_{0}}\dots, x_{1},\st{i_{1}}\dots, x_{2},\dots \dots, x_{n},\st{i_{n}}\dots),\\
%	\epsilon&=\sum_{i=1}^n\left(\norm{x_i}\sum_{j=0}^{i-1}(p_j-1)+(p_i-1)\left(i_0+\sum_{j=1}^{i-1}(p_j+i_j)\right)\right).
%	\end{align*}
%	We therefore think that the most natural way of presenting this brace algebra structure is via the operadic suspension  $\mathcal O_{\Sigma}$. Using these braces, the differential in $\mathcal A_\infty$ is simply given by the following formula,
%	\[d(\mu_n)=\sum_{p+q=n+1}\mu_p\{\mu_q\}.\]
\end{rem}

\begin{defn}\label{brace}
	A \emph{graded} or \emph{DG-brace algebra} is a graded module or chain complex $B$ equipped with maps, called \emph{braces}, $n\geq 1$,
	\begin{align*}
	B^{\otimes (n+1)}&\To B,\\
	x_{0}\otimes x_{1}\otimes\cdots\otimes x_{n}&\;\mapsto \; x_{0}\{x_{1},\dots,x_{n}\},
	\end{align*}
	satisfying
	\begin{align*}
	x\{y_{1},\dots,y_{p}\}\{z_{1},\dots z_{q}\}=\hspace{-25pt}\sum_{0\leq i_{1}\leq j_{1}\leq\cdots\leq i_{p}\leq j_{p}\leq q}\hspace{-25pt}(-1)^{\epsilon}x\{
	z_{1},&\dots, z_{i_{1}},
	y_{1}\{z_{i_{1}+1},\dots z_{j_{1}}\},
	\dots\\
	&\dots,
	y_{p}\{z_{i_{p}+1},\dots z_{j_{p}}\},
	z_{j_{p}+1},\dots z_{q}
	\}.
	\end{align*}
		The sign $(-1)^{\epsilon}$ is simply determined by the Koszul sign rule,
		\begin{align*}
		\epsilon&=\sum_{k=1}^{p}\sum_{l=1}^{i_{k}} \abs{y_{k}}\abs{z_{l}}.
		\end{align*}
	
	In the DG-case, the fact that braces are chain maps is equivalent to the following \emph{brace Leibniz rule}:
	\begin{align*}
	d(x_{0}\{x_{1},\dots,x_{n}\})&=d(x_{0})\{x_{1},\dots,x_{n}\}+\sum_{i=1}^{n}(-1)^{\sum\limits_{j=0}^{i-1}\abs{x_{i}}}x_{0}\{x_{1},\dots,d(x_{i}),\dots ,x_{n}\}.
	\end{align*}
\end{defn}

\begin{rem}\label{opbrace}
		If $\mathcal O$ is a graded or DG-operad, then $\bigoplus_{n\geq 0}\mathcal O(n)$ 
		has a brace algebra structure defined as follows. Given $x_{0},\dots,x_n\in\mathcal O$, with $x_0\in\mathcal O(m)$, 
		\begin{align*}
		x_{0}\{x_{1},\dots,x_{n}\}
		&=\sum_{i_{0}+\cdots +i_{n}=m-n}x_{0}(\st{i_{0}}\dots, x_{1},\st{i_{1}}\dots, x_{2},\dots \dots, x_{n},\st{i_{n}}\dots).
		%	\\
		%	&=\hspace{-20pt}\sum_{i_{0}+\cdots i_{n}=m_{0}-n}\hspace{-20pt} (\cdots((x_{0}\circ_{i_{0}+1}x_{1})\circ_{i_{0}+m_{1}+i_{1}+1})x_{2}\cdots)\circ_{i_{0}+\sum_{j=1}^{n-1}(m_{j}+i_{j})+1}x_{n}.
		\end{align*}
		Note that $x_{0}\{x_{1},\dots,x_{n}\}=0$ if $n>m$, since the summation is empty in this case. 
\end{rem}

\begin{rem}
	The DG-operad $\Lambda\mathcal A_\infty$ is freely generated as a graded operad by
	$$\mu_{n}\in(\Lambda\mathcal A_{\infty})(n)_{-1},\qquad n\geq 2,$$
	with differential defined by
	\begin{equation*}%\label{dm}
	d(\mu_{n})=\sum_{
			p+q=n+1}
	\mu_p\{\mu_q\}.
	\end{equation*}
	This is a cellular DG-operad with $\Lambda \mathcal A_0=\Lambda \mathcal A_1$ the initial DG-operad and, for  $n\geq 2$,
	\[\Lambda\mathcal A_{n}=\Lambda\mathcal A_{n-1}\amalg_{\partial_{n-1}}\mathcal F(\Bbbk\cdot\mu_n),\qquad \partial_{n-1}(\mu_n)=d(\mu_n).\]
	Here $\Bbbk\cdot\mu_n$ denotes the sequence of graded modules freely generated by $\mu_n$ in arity $n$ and degree $-1$ endowed with the trivial differential.
\end{rem}

\begin{rem}\label{braceliketree}
The chain homotopy $h_{V}$ of the canonical strong pseudo-cylinder of a free DG-operad $\mathcal F(V)$ is well behaved with respect to braces. Given $x_0\in IV$  and $x_1,\dots, x_n\in \mathcal F(IV)$,
\begin{align}%\label{hder2}
\nonumber	h_{V}(x_0\{x_1,\dots,x_n\})={}&h_I(x_0)\{x_1,\dots,x_n\}\\
\nonumber	&\hspace{-40pt}+
	\sum_{i=1}^{n}(-1)^{\sum\limits_{j=0}^{i-1}\abs{x_j}}i_0p(x_0)\{i_0p(x_1),\dots,i_0p(x_{i-1}),h_{V}(x_i),x_{i+1},\dots,x_n\}.
\end{align}
This follows from Remark \ref{hv}.

The chain homotopy $h_{\mathcal O,V}$ of the canonical induced strong pseudo-cylinder of a coproduct $\mathcal O\amalg\mathcal F(V)$, see Definition \ref{cispc}, satisfies the previous formula if $x_0\in IV$ and $x_1,\dots,x_n\in\mathcal P\amalg\mathcal F(IV)$, replacing $h_V$ with $h_{\mathcal O,V}$. Also if $x_0\in\mathcal P$ and each $x_i\in\mathcal P\amalg\mathcal F(IV)$, $1\leq i\leq n$, is either $x_{i}=\id{}$ or $x_i=y_{i,0}\{y_{i,1},\dots,y_{i,p_i}\}$ with $y_{i,0}\in IV$,  replacing $h_V$ with $h_{\mathcal O,V}$ and $h_I$ with $h_{\mathcal O}$. This follows from Remark \ref{formulae}.

%This yields a straightforward way of computing $h_V$ on nested bracketings of elements in $V$. This equation need not hold if $x_0\in\mathcal F(IV)$ is an arbitrary element.
%
%	The brace notation is well adapted to the formalism of labeled trees. A \emph{bracing} in a graded or DG-operad is an expression obtained by applying (iterated) brace operations to a given set of elements.
%	A \emph{nested bracing} is a bracing where no $$\dots\}\{\dots$$ appears. 
%	Any bracing can be turned into a nested bracing by using the relation in brace algebras (Definition \ref{brace}). Any nested bracing in an operad is a linear combination of labeled trees (in the sense of Remark \ref{free}, \ref{binary}, or more generally \ref{treecomp}) all of wich have the same number of inner vertices, the same labels (which are the entries of the bracing), and these labels appear in the same order (with respect to the path order) as in the bracing. Hence the formulas in Proposition \ref{pasting1} and Lemma \ref{0} can be applied to appropriate nested bracings.
\end{rem}

The following lemma contains the formulas which prove Theorem \ref{ainfcyl}. The first technical series of formulas is auxiliary. What really matters is the last one.

\begin{lem}\label{tech}
	The following equations hold in $I\Lambda\mathcal A_{n}$, 
		\begin{align}
		\nonumber h_{\Lambda\mathcal A_n}(i_0\mu_r(\dots,\sigma\mu_{t_1},\dots,\sigma\mu_{t_{j-1}},\dots,i_1\mu_{q},\dots,\sigma\mu_{t_{j}},\dots, \sigma\mu_{t_s},\dots))\hspace{-230pt}\\
		\nonumber&\hspace{-30pt}=h_{\Lambda\mathcal  A_{n-1},\Bbbk\cdot\mu_{n}}(\text{the same element})\qquad\text{{for $n=\max\{r,q,t_1,\dots,t_s\}$,}}\\
%		&\nonumber\hspace{-30pt}\qquad\quad\parbox[t][1em][t]{0.5\textwidth}{for $n=\max\{r,q,t_1,\dots,t_s\}$}\\
		\nonumber&=-i_0\mu_r(\dots,\sigma\mu_{t_1},\dots,\sigma\mu_{t_{j-1}},\dots,\sigma\mu_{q},\dots,\sigma\mu_{t_{j}},\dots, \sigma\mu_{t_s},\dots)\\
		&\label{raro}\qquad\quad\parbox[t][1em][t]{0.7\textwidth}{
		if $t_1,\dots,t_{j-1}>r\geq q$,\\
		or if $t_1,\dots,t_{j-1}>q> r$ and $q\leq  t_j,\dots,t_s$;
		}\\[.5cm]
		\nonumber&=0\;\;\;\text{ otherwise;}\\[10pt]
		\nonumber h_{\Lambda\mathcal A_{n}}i_{1}d(\mu_{n+1})&=\sum_{p+q=n+2}\sigma\mu_{p}\{i_{1}\mu_{q}\}-\sum_{\substack{1\leq s\leq r\\t_{1}+\cdots +t_{s}=n+1+s-r}}i_{0}\mu_{r}\{\sigma\mu_{t_{1}},\dots, \sigma\mu_{t_{s}}\}.
		\end{align}

\end{lem}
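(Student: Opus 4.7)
Both series of formulas will be proved by induction on $n$, using the recursive construction of the canonical strong pseudo-cylinder. By Definition \ref{cspc} applied to the twisted coproduct $\Lambda\mathcal A_n=\Lambda\mathcal A_{n-1}\amalg_{\partial_{n-1}}\mathcal F(\Bbbk\cdot\mu_n)$, Theorem \ref{pasting2} gives
\[h_{\Lambda\mathcal A_n}=\sum_{k\geq 0}\bigl(h_{\Lambda\mathcal A_{n-1},\Bbbk\cdot\mu_n}\,\partial_{n-1,I}\bigr)^k h_{\Lambda\mathcal A_{n-1},\Bbbk\cdot\mu_n}.\]
For the first equality of the first series of formulas, I would verify that when $n=\max\{r,q,t_1,\dots,t_s\}$, all summands with $k\geq 1$ vanish on the given input: applying $\partial_{n-1,I}$ to a standard labeled tree whose $\mu_n$ instances occur at the top produces standard trees in which these labels are replaced using $d(\mu_n)=\sum\mu_p\{\mu_q\}$, and the resulting trees satisfy one of the two vanishing hypotheses of Lemma \ref{vanishing}, so that the next application of $h_{\Lambda\mathcal A_{n-1},\Bbbk\cdot\mu_n}$ annihilates them.

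For the remaining cases of the first series of formulas, I would evaluate $h_{\Lambda\mathcal A_{n-1},\Bbbk\cdot\mu_n}$ directly via the brace formulas in Remark \ref{braceliketree}. Writing the input in brace form (using the translation of Remark \ref{opbrace} between iterated operadic compositions and iterated braces), that formula decomposes $h_{\Lambda\mathcal A_{n-1},\Bbbk\cdot\mu_n}$ into a sum of candidate summands, each applying $h$ or $h_I$ to a single factor while replacing earlier factors with $i_0 p$ of themselves. Using the identities $h_I i_0=0$, $h_I\sigma=0$, $h_I i_1=\sigma$, $p i_0=p i_1=1$, and $p\sigma=0$, the bulk of these summands vanish; the only surviving contribution applies $h_I$ to $i_1\mu_q$, producing $\sigma\mu_q$ with the minus sign accounted for by $\norm{i_1\mu_p}=-1$ in the operadic suspension. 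The stated conditions on the $t_k$ single out those arrangements for which the brace--composition translation does not force a vanishing $i_0 p(\sigma\mu_{t_k})$ among the preceding slots, and otherwise $h$ is zero.

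The last formula, which is the one that really matters for Theorem \ref{ainfcyl}, will follow by expanding $i_1 d(\mu_{n+1})=\sum_{p+q=n+2}i_1\mu_p\{i_1\mu_q\}$ and applying the first series of formulas summand by summand. The $s=0$ case of the brace formula applied to $i_1\mu_p\{i_1\mu_q\}$ with $h_I i_1=\sigma$ yields directly the sum $\sum_{p+q=n+2}\sigma\mu_p\{i_1\mu_q\}$, and iterating the $s\geq 1$ cases of the first series of formulas assembles the remaining terms $i_0\mu_r\{\sigma\mu_{t_1},\dots,\sigma\mu_{t_s}\}$ subject to the arity constraint $t_1+\cdots+t_s=n+1+s-r$. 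The main obstacle throughout will be the bookkeeping of Koszul signs from the operadic suspension of Definition \ref{operadicsusp} combined with the brace sign conventions, along with the case analysis distinguishing which slot arrangements survive the vanishing criteria of Lemma \ref{vanishing}.
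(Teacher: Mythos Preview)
Your plan has the right overall shape but misses the key structural point: the two series of formulas are proved by \emph{simultaneous} induction on $n$, and the interdependence is essential.

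For the first equality of the first series, your claim that ``these labels are replaced using $d(\mu_n)$'' is wrong on the $\sigma$-labels: by Theorem \ref{pasting2}, $\partial_{n-1,I}(\sigma\mu_n)=-h_{\Lambda\mathcal A_{n-1}}i_1 d(\mu_n)$, and you cannot check that the result satisfies the hypotheses of Lemma \ref{vanishing} without knowing this element explicitly. That requires the \emph{last} formula at level $n-1$, i.e.~the induction hypothesis on the other series. Moreover, the explicit value of $h_{\Lambda\mathcal A_{n-1},\Bbbk\cdot\mu_n}$ on the input (the second equality) is not a single brace evaluation: which formula from Remark \ref{formulae} applies depends on which of $r,q,t_1,\dots,t_s$ attains the maximum $n$, so the paper does a genuine case analysis and, when $n$ is attained only among the $t_i$'s, invokes the first series at a lower level by induction.

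For the last formula, your sketch is too optimistic. The first application of $h_{\Lambda\mathcal A_{n-1},\Bbbk\cdot\mu_n}$ to $i_1\mu_p\{i_1\mu_q\}$ produces \emph{two} terms, $\sigma\mu_p\{i_1\mu_q\}-i_0\mu_p\{\sigma\mu_q\}$, not one. The paper then splits on $p<q$, $p>q$, and $p=q$. When $p<q$ the higher perturbation terms vanish via Lemma \ref{vanishing}, but when $p>q$ they do \emph{not}: one must expand $\partial_{p-1,I}(\sigma\mu_p\{i_1\mu_q\})$ using the induction hypothesis on the last formula, apply the brace relation to $i_0\mu_r\{\sigma\mu_{t_1},\dots,\sigma\mu_{t_s}\}\{i_1\mu_q\}$, and then invoke the first series (already established up to $n$) to compute $h$ of each resulting summand. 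The surviving terms are then matched to the claimed sum by a nontrivial combinatorial bijection (the identity $(\star)$ in the paper), which is where the arity constraint $t_1+\cdots+t_s=n+1+s-r$ emerges. Your plan does not account for this case split, the nonvanishing higher terms, or the combinatorial identity.
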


\begin{proof}
	We simultaneously prove all equations by induction on $n$. Notice that there is nothing to check for $n=0,1$. Take a bigger $n$ and assume that the formulas are true for smaller values. We start with the first series of equations. For the sake of simplicity, let us denote \[x=i_0\mu_r(\dots,\sigma\mu_{t_1},\dots,\sigma\mu_{t_{j-1}},\dots,i_1\mu_{q},\dots,\sigma\mu_{t_{j}},\dots, \sigma\mu_{t_s},\dots).\]
	
	If $n>r,q$, then $n$ is reached at some of the $t_i$s. Hence,
	\begin{align*}
	x&=
	i_0\mu_r(\dots,i_1\mu_{q},\dots)(\dots,\sigma\mu_{n},\dots,\sigma\mu_{n},\dots),
	\end{align*}
	where $i_0\mu_r(\dots,i_1\mu_{q},\dots)$ has been obtained by removing all $\sigma\mu_{n}$s from $x$. Therefore, by Remark \ref{formulae},	
	\begin{align*}
	h_{\Lambda \mathcal A_{n-1},\Bbbk\cdot\mu_{n}}(x)={}&
	h_{\Lambda \mathcal A_{n-1}}(i_0\mu_r(\dots,i_1\mu_{q},\dots))(\dots,\sigma\mu_{n},\dots,\sigma\mu_{n},\dots)\\
	&+\sum i_0p(i_0\mu_r(\dots,i_1\mu_{q},\dots))(\dots,h_I\sigma\mu_n ,\dots).
	\end{align*}
	We do not index the summation since all terms vanish anyway (recall that $h_I\sigma=0$). The maximum subscript in $i_0\mu_r(\dots,i_1\mu_{q},\dots)$ is smaller than $n$. Moreover,  
	$i_0\mu_r(\dots,i_1\mu_{q},\dots)$ satisfies assumption \eqref{raro} if and only if $x$ does. Therefore, using the induction hypothesys, we obtain that
	\begin{align*}
	h_{\Lambda \mathcal A_{n-1},\Bbbk\cdot\mu_n}(x)&=
	-i_0\mu_r(\dots,\sigma\mu_{q},\dots)(\dots,\sigma\mu_n,\dots,\sigma\mu_n,\dots)&\\
	&=-i_0\mu_r(\dots,\sigma\mu_{t_1},\dots,\sigma\mu_{t_{j-1}},\dots,\sigma\mu_{q},\dots,\sigma\mu_{t_{j}},\dots, \sigma\mu_{t_s},\dots)\\
	&\;\;\;\qquad\text{if \eqref{raro} holds,}\\
	&=0\quad\text{otherwise.}
	\end{align*}
	
	If $q=n>r$ and there is some $t_i<n$, then
	\begin{align*}
	x=
	i_0\mu_r(\dots,\sigma\mu_{t_{i}},\dots)(\dots,\sigma\mu_n,\dots,
	i_1\mu_n,\dots,
	\sigma\mu_n,\dots)&,
	\end{align*}
	where $i_0\mu_r(\dots,\sigma\mu_{t_{i}},\dots)$ has been obtained by removing $i_1\mu_n$ and all $\sigma\mu_n$s from $x$. By Remark \ref{formulae},
	\begin{align*}
	h_{\Lambda \mathcal A_{n-1},\Bbbk\cdot\mu_n}(x)={}&
	h_{\Lambda \mathcal A_{n-1}}(i_0\mu_r(\dots,\sigma\mu_{t_{i}},\dots))(\dots,\sigma\mu_n,\dots,
	i_1\mu_n,\dots,
	\sigma\mu_n,\dots)\\
	&-\sum i_0p(i_0\mu_r(\dots,\sigma\mu_{t_{i}},\dots))(\dots)\\={}&0.
	\end{align*}
	The first term is $0$ by Lemma \ref{vanishing} (2). The summation vanishes since $p\sigma=0$.  In this case \eqref{raro} cannot hold.
	
	If $q=t_1=\cdots=t_s=n>r$, then by Remark \ref{formulae},
	\begin{align*}
	h_{\Lambda\mathcal A_{n-1},\Bbbk\cdot\mu_n}(x)={}&
	h_{\Lambda\mathcal A_{n-1}}(i_0\mu_r)(\dots,\sigma\mu_n,\dots,
	i_1\mu_n,\dots,
	\sigma\mu_n,\dots)\\
	&-\sum i_0p(i_0\mu_r)(\dots,h_I\sigma\mu_n,\dots,
	i_1\mu_n,\dots,
	\sigma\mu_n,\dots)\\
	&-	i_0p(i_0\mu_r)(\dots,i_0p\sigma\mu_n,\dots,
	h_Ii_1\mu_n,\dots,
	\sigma\mu_n,\dots)\\
	&+\sum i_0p(i_0\mu_r)(\dots,i_0p\sigma\mu_n,\dots,
	i_0pi_1\mu_n,\dots,
	h_I\sigma\mu_n,\dots).
	\end{align*}
	Using that $h_{\Lambda\mathcal A_{n-1}}i_0=0$, $h_I\sigma=0$, and $p\sigma=0$, we see that all factors vanish unless $i_1\mu_n$ is the first element in the brackets in $x$, i.e.~$j=1$. In that case, there is a single non-vanishing term:
	\begin{align*}
	h_{\Lambda\mathcal A_{n-1},\Bbbk\cdot\mu_n}(x)&=
	-i_0p_0i_0(\mu_r)(\dots,h_Ii_1\mu_n,\dots,\sigma\mu_n,\dots)\\
	&=
	-i_0(\mu_r)(\dots,\sigma\mu_n,\dots,\sigma\mu_n,\dots).
	\end{align*}
	
	If $r=n$, Remark \ref{formulae} also applies directly, and using Corollary \ref{uno},
	\begin{align*}
	h_{\Lambda \mathcal A_{n-1},\Bbbk\cdot\mu_n}(x)={}&
	h_Ii_0\mu_n(\dots,\sigma\mu_{t_1},\dots,\sigma\mu_{t_{j-1}},\dots,i_1\mu_{q},\dots,\sigma\mu_{t_{j}},\dots, \sigma\mu_{t_s},\dots)\\
	&-\sum i_0pi_0(\mu_n)(\dots,h_I\sigma\mu_{t_i},\dots, i_1\mu_{q},\dots)\\
	&-i_0pi_0(\mu_n)(\dots,i_0p\sigma\mu_{t_i},\dots, h_Ii_1\mu_{q},\dots)\\
	&+\sum i_0pi_0(\mu_n)(\dots, i_0p_0i_1\mu_{q},\dots,h_I\sigma\mu_{t_i},\dots).
	\end{align*}
	Using again that $h_Ii_0$, $h_I\sigma=0$, and $p\sigma=0$, we deduce that all terms vanish unless $i_1\mu_{q}$ is the first element in the brackets in $x$, i.e.~$j=1$. In that case, there is again a single non-vanishing term, 
	\begin{align*}
	h_{\Lambda\mathcal A_{n-1},\Bbbk\cdot\mu_n}(x)&=
	-i_0pi_0(\mu_n)(\dots,h_Ii_1\mu_{q},\dots,\sigma\mu_{t_{1}},\dots,\sigma\mu_{t_{s}},\dots)\\
	&=
	-i_0(\mu_n)(\dots,\sigma\mu_{q},\dots,\sigma\mu_{t_{1}},\dots,\sigma\mu_{t_{s}},\dots).
	\end{align*}
	
	We have finally established the formula for $h_{\Lambda\mathcal A_{n-1},\Bbbk\cdot\mu_n}(x)$. Let us see that it concides with $h_{\Lambda\mathcal A_{n}}(x)$. Using the induction hypothesys for the formula of $h_{\Lambda \mathcal A_{n-1}}i_1d(\mu_n)$, we see that $\partial_{n-1,I}h_{\Lambda\mathcal A_{n-1},\Bbbk\cdot\mu_n}(x)$ is a linear combination of standard labeled trees satisfying the hypotheses of Lemma \ref{vanishing} (2). Therefore, 
	\[h_{\Lambda\mathcal A_{n}}\partial_{n-1,I}h_{\Lambda\mathcal A_{n-1},\Bbbk\cdot\mu_n}(x)=0.\] 
	This implies $h_{\Lambda\mathcal A_{n}}(x)=h_{\Lambda\mathcal A_{n-1},\Bbbk\cdot\mu_n}(x)$ by Remark \ref{hrec}.

	We now attack the final formula. In order to apply $h_{\Lambda\mathcal A_{n}}$, we divide $i_{1}d(\mu_{n+1})$ in two three blocks, according to the parity of $n$,
		\begin{align*}
		i_1d(\mu_{n+1})={}&\sum_{\substack{
				p+q=n+2\\p<q}}
			i_{1}\mu_{p}\{i_{1}\mu_{q}\}+\sum_{\substack{
				p+q=n+2\\p>q}}
			i_{1}\mu_{p}\{i_{1}\mu_{q}\}\\
		&+i_{1}\mu_{\frac{n}{2}+1}\{i_{1}\mu_{\frac{n}{2}+1}\}\qquad\text{if $n$ is even},
		\end{align*}
	which will be consiered in this order.

	If $p<q$, $i_{1}\mu_{p}\{i_{1}\mu_{q}\}\in I\Lambda \mathcal A_{q}$ is in filtration degree $1$, hence 
	\begin{align}
\nonumber		h_{\Lambda\mathcal A_{n}}(i_{1}\mu_{p}\{i_{1}\mu_{q}\})={}&h_{\Lambda\mathcal A_{q}}(i_{1}\mu_{p}\{i_{1}\mu_{q}\})\\
\nonumber={}&
		h_{\Lambda\mathcal A_{q-1},\Bbbk\cdot\mu_q}(i_{1}\mu_{p}\{i_{1}\mu_{q}\})\\
		&\label{A} +h_{\Lambda\mathcal A_{q-1}}\partial_{q-1,I}h_{\Lambda\mathcal A_{q-1},\Bbbk\cdot\mu_q}(i_{1}\mu_{p}\{i_{1}\mu_{q}\}).
	\end{align}
	By Remark \ref{braceliketree} and Corollary \ref{uno}, the first summand is
	\begin{align}
		\nonumber h_{\Lambda\mathcal A_{q-1},\Bbbk\cdot\mu_q}(i_{1}\mu_{p}\{i_{1}\mu_{q}\})&=h_{\Lambda\mathcal A_{q-1}}i_1\mu_p\{i_1\mu_q\}
		-i_0pi_1\mu_p\{h_Ii_1\mu_q\}\\
		\label{B}&=\sigma\mu_p\{i_1\mu_q\}-i_0\mu_p\{\sigma\mu_q\}.
	\end{align}
	Let us see that $\eqref{A}=0$. If we apply $\partial_{q-1,I}$ to the summands in \eqref{B}, we obtain
	\begin{align*}
		\partial_{q-1,I}(\sigma\mu_p\{i_1\mu_q\})&=
		\sigma\mu_p\{i_1d(\mu_q)\},&
		\partial_{q-1,I}(i_0\mu_p\{\sigma\mu_q\})&=
			i_0\mu_p\{h_{\Lambda\mathcal A_{q-1}}i_1d(\mu_q)\}.
	\end{align*}
	The first element is a linear combination of standard labeled trees as in Lemma \ref{vanishing} (1). Moreover, using the induction hypothesis for $h_{\Lambda\mathcal A_{q-1}}i_1d(\mu_q)$ we see that the second term is a linear combination of standard labeled trees as in Lemma \ref{vanishing} (2). Therefore, this lemma proves that
	\begin{align*}
		h_{\Lambda\mathcal A_{q-1}}(\sigma\mu_p\{i_1d(\mu_q)\})&=0,&
				h_{\Lambda\mathcal A_{q-1}}(i_0\mu_p\{h_{\Lambda\mathcal A_{q-1}}i_1d(\mu_q)\})&=0.
	\end{align*}

	If $p>q$, $i_{1}\mu_{p}\{i_{1}\mu_{q}\}\in I\Lambda \mathcal A_{p}$ is in filtration degree $1$ and
	\begin{align}
\nonumber	h_{\Lambda\mathcal A_{n}}(i_{1}\mu_{p}\{i_{1}\mu_{q}\})={}&h_{\Lambda\mathcal A_{p}}(i_{1}\mu_{p}\{i_{1}\mu_{q}\})\\\nonumber={}&
	h_{\Lambda\mathcal A_{p-1},\Bbbk\cdot\mu_p}(i_{1}\mu_{p}\{i_{1}\mu_{q}\})\\
\label{BB'}	&+h_{\Lambda\mathcal A_{p-1}}\partial_{p-1,I}h_{\Lambda\mathcal A_{p-1},\Bbbk\cdot\mu_p}(i_{1}\mu_{p}\{i_{1}\mu_{q}\}).
	\end{align}
	As above,
	\begin{align}
\nonumber		h_{\mathcal A_{p-1},\Bbbk\cdot\mu_p}(i_{1}\mu_{p}\{i_{1}\mu_{q}\})&=
		h_Ii_1\mu_p\{i_1\mu_q\}-i_0pi_1\mu_p\{h_{\Lambda\mathcal A_{p-1}}i_1\mu_q\}\\
\label{AA'}		&=\sigma\mu_p\{i_1\mu_q\}-i_0\mu_p\{\sigma\mu_q\}.
	\end{align}
	In this case, \eqref{BB'}  need not be $0$. Using the induction hypothesis for $h_{\Lambda\mathcal A_{p-1}}i_1d(\mu_p)$, we see that $\partial_{p-1,I}$ of the two summands in \eqref{AA'} is
	\begin{align*}
		\partial_{p-1,I}(i_0\mu_p\{\sigma\mu_q\})={}&i_0d(\mu_p)\{\sigma\mu_q\},\\
		\partial_{p-1,I}(\sigma\mu_p\{i_1\mu_q\}) ={}&-h_{\Lambda\mathcal A_{p-1}}i_1d(\mu_p)\{i_1\mu_q\}\\
		={}&-\sum_{k+l=p+1}\sigma\mu_k\{i_1\mu_l\}\{i_1\mu_q\}\\
		&+\sum_{\substack{1\leq s\leq r\\t_1+\cdots+t_s=p+s-r}}i_0\mu_r\{\sigma\mu_{t_1},\dots,\sigma\mu_{t_s}\}\{i_1\mu_q\}.
	\end{align*}
	The brace relation yields
	\begin{align*}
		\sigma\mu_k\{i_1\mu_l\}\{i_1\mu_q\}={}&\sigma\mu_k\{i_1\mu_l,i_1\mu_q\}+\sigma\mu_k\{i_1\mu_l\{i_1\mu_q\}\}\\&-\sigma\mu_k\{i_1\mu_q,i_1\mu_l\},\\
		i_0\mu_r\{\sigma\mu_{t_1},\dots,\sigma\mu_{t_s}\}\{i_1\mu_q\}={}&
		\sum_{j=1}^si_0\mu_r\{\dots,\sigma\mu_{t_{j}}\{i_1\mu_q\},\dots\}\\&
		+\sum_{j=1}^{s+1}i_0\mu_r\{\dots,\sigma\mu_{t_{j-1}},i_1\mu_q,\sigma\mu_{t_j},\dots\}\text{ if }s<r,
	\end{align*}
	see Remark \ref{opbrace}.
	
	By Lemma \ref{vanishing},
	\begin{align*}
		h_{\Lambda \mathcal A_{p-1}}(\sigma\mu_k\{i_1\mu_l\}\{i_1\mu_q\})&=0,&
		h_{\Lambda \mathcal A_{p-1}}(i_0\mu_r\{\dots,\sigma\mu_{t_{j}}\{i_1\mu_q\},\dots\})&=0.
	\end{align*}
	Moreover, by the first series of equations in the statement, already checked up to $n$,
		\begin{align*}
		h_{\Lambda \mathcal A_{p-1}}(i_0\mu_r\{\dots,\sigma\mu_{t_{j-1}},i_1\mu_q,\sigma\mu_{t_j},\dots\})&=
		\left\{
		\begin{array}{ll}
		-i_0\mu_r\{\dots,\sigma\mu_{t_{j-1}},\sigma\mu_q,\sigma\mu_{t_j},\dots\}\\
		\qquad\text{if }t_1,\dots,t_{j-1}>r\geq q,\\
		\qquad\text{or if }t_1,\dots,t_{j-1}>q> r\\\qquad\quad\!\text{ and }q\leq  t_j,\dots,t_s;\\[10pt]
		0,\quad\,\text{otherwise.}
		\end{array}
		\right.
		\end{align*}
	
	If $n$ is even, by Remark \ref{hrec},
	\begin{align*}
	h_{\Lambda\mathcal A_n}(i_1\mu_{\frac{n}{2}+1}\{i_1\mu_{\frac{n}{2}+1}\})={}&h_{\Lambda\mathcal A_{\frac{n}{2}},\Bbbk\cdot\mu_{\frac{n}{2}+1}}(i_1\mu_{\frac{n}{2}+1}\{i_1\mu_{\frac{n}{2}+1}\})\\
	&+h_{\Lambda\mathcal A_{\frac{n}{2}
			%},\Bbbk\cdot\mu_{\frac{n}{2}
			+1}}\partial_{\frac{n}{2},I}h_{\Lambda\mathcal A_{\frac{n}{2}},\Bbbk\cdot\mu_{\frac{n}{2}+1}}(i_1\mu_{\frac{n}{2}+1}\{i_1\mu_{\frac{n}{2}+1}\}).\\
%	&+h_{\Lambda\mathcal A_{\frac{n}{2}}}(\partial_{\frac{n}{2},I}h_{\Lambda\mathcal A_{\frac{n}{2}},\Bbbk\cdot\mu_{\frac{n}{2}+1}})^2(i_1\mu_{\frac{n}{2}+1}\{i_1\mu_{\frac{n}{2}+1}\})
	\end{align*}
	The first summand is computed as in the previous two cases,
	\begin{align*}
	h_{\Lambda\mathcal A_{\frac{n}{2}},\Bbbk\cdot\mu_{\frac{n}{2}+1}}(i_1\mu_{\frac{n}{2}+1}\{i_1\mu_{\frac{n}{2}+1}\})&=h_Ii_1\mu_{\frac{n}{2}+1}\{i_1\mu_{\frac{n}{2}+1}\}-i_0pi_1\mu_{\frac{n}{2}+1}\{h_Ii_1\mu_{\frac{n}{2}+1}\}\\
	&=\sigma\mu_{\frac{n}{2}+1}\{i_1\mu_{\frac{n}{2}+1}\}-i_0\mu_{\frac{n}{2}+1}\{\sigma\mu_{\frac{n}{2}+1}\},
	\end{align*}
	Let us check that the second one vanishes. We have
	\begin{align*}
		\partial_{\frac{n}{2},I}(i_0\mu_{\frac{n}{2}+1}\{\sigma\mu_{\frac{n}{2}+1}\})&=i_0d(\mu_{\frac{n}{2}+1})\{\sigma\mu_{\frac{n}{2}+1}\}+i_0\mu_{\frac{n}{2}+1}\{h_{\Lambda\mathcal A_{\frac{n}{2}}}i_1d(\mu_{\frac{n}{2}+1})\},\\	\partial_{\frac{n}{2},I}(\sigma\mu_{\frac{n}{2}+1}\{i_1\mu_{\frac{n}{2}+1}\})&=-h_{\Lambda\mathcal A_{\frac{n}{2}}}i_1d(\mu_{\frac{n}{2}+1})\{i_1\mu_{\frac{n}{2}+1}\}+\sigma\mu_{\frac{n}{2}+1}\{i_1d(\mu_{\frac{n}{2}+1})\}.
	\end{align*}
	Clearly $i_0d(\mu_{\frac{n}{2}+1})\{\sigma\mu_{\frac{n}{2}+1}\}$ and $\sigma\mu_{\frac{n}{2}+1}\{i_1d(\mu_{\frac{n}{2}+1})\}$ are linear combinations of standard labeled trees as in Lemma \ref{vanishing}. Using the formula for $h_{\Lambda\mathcal A_{\frac{n}{2}}}i_1d(\mu_{\frac{n}{2}+1})$, that we know by induction hypothesis, we see that $i_0\mu_{\frac{n}{2}+1}\{h_{\Lambda\mathcal A_{\frac{n}{2}}}i_1d(\mu_{\frac{n}{2}+1})\}$ too. Therefore, 
		\begin{align*}
		h_{\Lambda\mathcal A_{\frac{n}{2}
				%},\Bbbk\cdot\mu_{\frac{n}{2}
				+1}}(i_0d(\mu_{\frac{n}{2}+1})\{\sigma\mu_{\frac{n}{2}+1}\})&=0,&
	h_{\Lambda\mathcal A_{\frac{n}{2}
			%},\Bbbk\cdot\mu_{\frac{n}{2}
			+1}}(i_0\mu_{\frac{n}{2}+1}\{hi_1d(\mu_m)\})&=0,\\&&
		h_{\Lambda\mathcal A_{\frac{n}{2}%},\Bbbk\cdot\mu_{\frac{n}{2}
				+1}}(\sigma\mu_{\frac{n}{2}+1}\{i_1d(\mu_{\frac{n}{2}+1})\})&=0.
	\end{align*}
	Moreover,
	\begin{align}
		\nonumber h_{\Lambda\mathcal A_{\frac{n}{2}}}i_1d(\mu_{\frac{n}{2}+1})\{i_1\mu_{\frac{n}{2}+1}\}={}&
		\sum_{p+q=\frac{n}{2}+2}\sigma\mu_{p}\{i_{1}\mu_{q}\}\{i_1\mu_{\frac{n}{2}+1}\}\\\label{B'}&
		-\sum_{\substack{1\leq s\leq r\\t_{1}+\cdots +t_{s}=\frac{n}{2}+1+s-r}}i_{0}\mu_{r}\{\sigma\mu_{t_{1}},\dots, \sigma\mu_{t_{s}}\}\{i_1\mu_{\frac{n}{2}+1}\}.
	\end{align}
	Using the brace equation, we can check as above that $\sigma\mu_{p}\{i_{1}\mu_{q}\}\{i_1\mu_{\frac{n}{2}+1}\}$  is  a linear combination of standard labeled trees as in Lemma \ref{vanishing} (1), so
	\begin{align*}
		h_{\Lambda\mathcal A_{\frac{n}{2}%},\Bbbk\cdot\mu_{\frac{n}{2}
				+1}}(\sigma\mu_{p}\{i_{1}\mu_{q}\}\{i_1\mu_{\frac{n}{2}+1}\})&=0.
	\end{align*}
	Furthermore, the summands in \eqref{B'} are
	\begin{align}
		\nonumber i_0\mu_{r}\{\sigma\mu_{t_{1}},\dots, \sigma\mu_{t_{s}}\}\{i_1\mu_{\frac{n}{2}+1}\}={}&
		\sum_{j=1}^si_0\mu_r\{\dots,\sigma\mu_{t_{j}}\{i_1\mu_{\frac{n}{2}+1}\},\dots\}\\
		\label{C}&
		+\sum_{j=1}^{s+1}i_0\mu_r\{\dots,\sigma\mu_{t_{j-1}},i_1\mu_{\frac{n}{2}+1},\sigma\mu_{t_j},\dots\}\text{ if }s<r,
	\end{align}
	see Remark \ref{opbrace}. Here, $i_0\mu_r\{\dots,\sigma\mu_{t_{j}}\{i_1\mu_{\frac{n}{2}+1}\},\dots\}$ is a linear combination of standard labeled trees as in Lemma \ref{vanishing} (2), so
	\begin{align*}
		h_{\Lambda\mathcal A_{\frac{n}{2}%},\Bbbk\cdot\mu_{\frac{n}{2}
				+1}}(i_0\mu_r\{\dots,\sigma\mu_{t_{j}}\{i_1\mu_{\frac{n}{2}+1}\},\dots\})&=0.
	\end{align*}
	Looking at the index of the summation in \eqref{B'}, we see that, in \eqref{C}, $r,t_1,\dots,t_s\leq \frac{n}{2}$. Therefore, by the first series of equations in the statement, already checked up to $n$, we see that 
	\begin{align*}
		h_{\Lambda\mathcal A_{\frac{n}{2}%},\Bbbk\cdot\mu_{\frac{n}{2}
				+1}}(i_0\mu_r\{\dots,\sigma\mu_{t_{j-1}},i_1\mu_{\frac{n}{2}+1},\sigma\mu_{t_j},\dots\})={}&0.
	\end{align*}

	Collecting previous equations, we obtain
	\begin{align*}
		hi_1d(\mu_{n+1})={}&
		\sum_{p+q=n+2}(\sigma\mu_p\{i_1\mu_q\}-i_0\mu_p\{\sigma\mu_q\})\\
		&-\overbrace{\sum_{\substack{p+q=n+2\\p>q}}\sum_{s=1}^{r-1}\sum_{j=1}^{s+1}\sum_{\substack{t_1+\cdots+t_s=p+s-r\\\text{and either}\\t_1,\dots,t_{j-1}>r\geq q\\\text{or}\\t_1,\dots,t_{j-1}>q>r\\
				\text{and }t_j,\dots,t_s\geq q}}i_0\mu_r\{\dots,\sigma\mu_{t_{j-1}},\sigma\mu_q,\sigma\mu_{t_j},\dots\}}^{(\star)}.
	\end{align*}
	Now, it is enough to check that
	\begin{align}\label{D}
		(\star)
		&=\sum_{\substack{2\leq \bar{s}\leq \bar{r}\\\bar t_{1}+\cdots +\bar t_{\bar s}=n+1+\bar s-\bar r}}i_{0}\mu_{\bar r}\{\sigma\mu_{\bar t_{1}},\dots, \sigma\mu_{\bar t_{\bar s}}\}.
	\end{align}
	The summand in $(\star)$ corresponding to certain $p,q,j,r,s,t_1,\dots,t_s$ is the same as the summand on the right hand side of \eqref{D} corresponding to 
	\begin{align*}
		\bar r&=r,&
		\bar s&=s+1,&
		\bar t_i&=t_i\text{ for }1\leq i<j,&
		\bar t_j&=q,&
		\bar t_i&=t_{i-1}\text{ for }j<i\leq \bar s.
	\end{align*}
	Obviously $\bar s=s+1\geq 1+1=2$; $\bar s=s+1\leq r-1+1=\bar r$; and, using that $p+q=n+2$,
	\[\bar t_1+\cdots+\bar t_{\bar s}=t_1+\cdots+t_s+q=p+s-r+q=n+1+s+1-r=n+1+\bar s-\bar r.\]
	
	Consider now a summand on the right hand side of \eqref{D}, corresponding to certain $\bar r,\bar s,\bar t_1,\dots,\bar t_{\bar s}$. Suppose that some $\bar t_i$ is smaller or equal than $\bar r$. Let $j$ be the smallest value $1\leq j\leq \bar s$ such that $\bar t_{j}\leq\bar r$, in particular $\bar t_1,\dots,\bar t_{j-1}>\bar r\geq \bar t_{j}$. Then the summand on the right hand side of \eqref{D} is the same as the summand in $(\star)$ corresponding to
	\begin{align}
		\label{gurdu2} r&=\bar r,&t_i&=\bar t_i\text{ for }1\leq i<j,\\
		\nonumber s&=\bar s-1,&t_{i-1}&=\bar t_i\text{ for }j<i\leq\bar s,\\
		\nonumber q&=\bar t_{j},&p&=\bar t_1+\cdots+\bar t_{j-1}+\bar t_{j+1}+\cdots+\bar t_{\bar s}+\bar r-\bar s+1.
	\end{align}
	Note that $s=\bar s-1\geq 2-1=1$; $1\leq j\leq\bar s=s+1$;  $r=\bar r\geq \bar s>\bar s-1=s$; 
	$p+q=\bar t_1+\cdots+\bar t_{\bar s}+\bar r-\bar s+1=n+2$; for $1\leq i<j$, $t_i=\bar t_i>\bar r=r\geq \bar t_{j}=q$; and, since $\bar t_i> 1$ for all $i$, $p> \bar s-1+\bar r-\bar s+1=\bar r\geq \bar t_j=q$.
	
	Othwerwise, all $\bar t_i$s are bigger than $\bar r$. Let $j$ be the smallest $1\leq j\leq \bar s$ such that $\bar t_j$ attains the minimum value among all $\bar t_i$s, in particular $\bar t_1,\dots,\bar t_{j-1}>\bar t_j>\bar r$ and $\bar t_{j+1},\dots,\bar t_{\bar s}\geq \bar t_j$. Once again, one can straightforwardly check that the summand on the right hand side of \eqref{D} is the same as the summand in $(\star)$ corresponding to the formulas in \eqref{gurdu2}. 
\end{proof}

We now compute examples with non-trivial operations in arities $1$ and $0$.

\begin{exm}\label{conder}
	We can consider the following extension $\mathcal{A}^D_{\infty}$ of the $A$-infinity operad which has non-trivial elements in arity $1$. As a graded operad, it is freely generated by
	$$\mu_{n}\in\mathcal{A}^{D}_{\infty}(n)_{n-2},\quad n\geq 2;
	\qquad D_{n}\in\mathcal{A}^{D}_{\infty}(n)_{n-1},\quad n\geq 1.$$
	The differential on the new generators is given by
	\begin{align*}%\label{dm}
	d(D_{n})={}
	&\sum_{\substack{
			p+q=n+1\\
			1\leq i\leq p}}
	((-1)^{(q-1)(i-1)}\mu_{p}\circ_iD_{q}-(-1)^{p-i+q(i-1)}D_{p}\circ_i\mu_{q}).
	\end{align*}
	% CHECKING THAT THE DIFFERENTIAL SQUARES TO 0 ON NEW ELEMENTS
	%
	%\begin{align*} 
	%  d^{2}(D_{n})={}&\sum_{p+q=n+1}(d(\mu_{p})\{D_{q}\}-\mu_{p}\{d(D_{q})\}-d(D_{p})\{\mu_{q}\}-D_{p}\{\d(\mu_{q})\})\\
	%={}&\sum_{p+q=n+2}(\mu_{p}\{\mu_q\}\{D_{r}\}-\mu_{p}\{\mu_q\{D_{r}\}\}+\mu_{p}\{D_q\{\mu_{r}\}\}\\
	%&\qquad\qquad-\mu_{p}\{D_q\}\{\mu_{r}\}+D_p\{\mu_q\}\{\mu_r\}-D_{p}\{\mu_{q}\{\mu_r\}\})\\
	%={}&\sum_{p+q=n+2}(\mu_{p}\{\mu_q,D_{r}\}+\mu_{p}\{D_{r},\mu_q\}-\mu_{p}\{D_q,\mu_{r}\}\\
	%&\qquad\qquad-\mu_{p}\{\mu_{r},D_q\}+D_p\{\mu_q,\mu_r\}+D_{p}\{\mu_{r},\mu_q\})\\
	%={}&0.
	%\end{align*} 
	This is the Koszul resolution of the operad whose algebras are associative algebras equipped with a degree $0$ derivation, considered by Loday in \cite{oaad}. We have simplified notation and adapted sign conventions to our setting. We should warn the reader that \cite{oaad} contains an obvious mistake in the grading of generators coming from the $A$-infinity operad (in fact, Loday's differential would not be homogeneous with his grading). We have fixed this mistake and its consequences on signs. Let us sketch how the canonical strong cylinder $I\mathcal{A}^D_{\infty}$ can be computed as above, by using operadic suspensions.
			
	The operadic suspension $\Lambda\mathcal A^D_\infty$ is freely generated as a graded operad by
	$$\mu_{n}\in\Lambda\mathcal{A}^{D}_{\infty}(n)_{-1},\quad n\geq 2;
	\qquad D_{n}\in\Lambda\mathcal{A}^{D}_{\infty}(n)_{0},\quad n\geq 1.$$
	The differential of the generators not coming from $\Lambda\mathcal A_\infty$ is then
	\begin{align*}
		d(D_{n})={}&\sum_{p+q=n+1}(\mu_{p}\{D_{q}\}-D_{p}\{\mu_{q}\}).
	\end{align*}
	This operad is cellular with $\Lambda\mathcal A_0^D=\Lambda\mathcal A_1^D$ the initial DG-operad and 
	\begin{align*}
		\Lambda \mathcal A_n^D&=\Lambda \mathcal A_{n-1}^D\amalg_{\partial_{n-1}}\mathcal F(\Bbbk\cdot
		\{D_{n-1},\mu_n\}),\quad n\geq 2.
	\end{align*}
	Here $\partial_{n-1}$ is defined as the differential. Lemma \ref{tech} can be extended to show
	\begin{align*}
	  h_{\Lambda\mathcal A_{n}^{D}}i_{1}d(D_{n})&=\sum_{p+q=n+1}\sigma\mu_{p}\{i_{1}D_{q}\}\\
	 &\quad-\hspace{-20pt}\sum_{\substack{0\leq s< r\\t_{1}+\cdots +t_{s}+q=n+1+s-r\\
			1\leq j\leq s+1
		}}\hspace{-20pt}i_{0}\mu_{r}\{\sigma\mu_{t_{1}},\dots, 
		\sigma\mu_{t_{j-1}},\sigma D_{q},\sigma\mu_{t_{j}},\dots,
		\sigma\mu_{t_{s}}\}
		\\
		 &\quad
		-\sum_{p+q=n+1}\sigma D_{p}\{i_{1}\mu_{q}\}
		-\hspace{-20pt}\sum_{\substack{1\leq s\leq r\\t_{1}+\cdots +t_{s}=n+s-r}}\hspace{-20pt}i_{0}D_{r}\{\sigma\mu_{t_{1}},\dots, \sigma\mu_{t_{s}}\}.
	\end{align*}
	This formula, together with \eqref{dsigma2} and the fact that the inclusions $I\Lambda \mathcal A_\infty\subset I\Lambda \mathcal A_\infty^D$ and $i_0,i_1\colon \Lambda \mathcal A_\infty^D\r I\Lambda \mathcal A_\infty^D$ are DG-operad morphisms, completely determines $I\Lambda \mathcal A_\infty^D$ as a DG-operad, and hence $I\mathcal A_\infty^D$.
\end{exm}

	Our canonical strong pseudo-cylinder generalizes the classical cylinder of 
	DG-algebras, regarded as DG-operads concentrated in arity $1$. This is a consequence the following result, see \eqref{dsigma2} and \cite[\S I.7]{ah}.
	
\begin{lem}
Let $\mathcal O$ be an absolutely pseudo-cellular DG-operad of length $\alpha$ as in Definition \ref{rpc}, such that $V_\beta$ is concentrated in arities $0$ and $1$, $\beta<\alpha$. Then, the following equation holds for any $x\in\mathcal O(1)$ and any $y\in\mathcal O$,
	\begin{align*}
		h_{\mathcal O}i_1(x\circ_1y)&=h_{\mathcal O}i_1(x)\circ_1i_1(y)+(-1)^{\abs{x}}i_0(x)\circ_1 h_{\mathcal O}i_1(y).
	\end{align*}
\end{lem}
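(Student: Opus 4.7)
The plan is to establish a stronger Leibniz-type identity for $h_{\mathcal O}$ on all of $I\mathcal O$, of which the stated formula is a direct consequence. Specifically, I shall prove that for any $a\in I\mathcal O(1)$ and $b\in I\mathcal O$,
\[
h_{\mathcal O}(a\circ_1 b)=h_{\mathcal O}(a)\circ_1 b+(-1)^{\abs{a}}i_0p(a)\circ_1 h_{\mathcal O}(b).
\]
Substituting $a=i_1(x)$ and $b=i_1(y)$ and using $pi_1=1_{\mathcal O}$, so that $i_0pi_1=i_0$, recovers the statement of the lemma.

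The argument proceeds by induction on the pseudo-cellular length $\alpha$. For $\alpha=0$, $h_{\mathcal O}=0$ and the formula is trivial; for limit $\alpha$, both sides are computed in some $I\mathcal O_{\gamma}$ with $\gamma<\alpha$, so the formula passes through the colimit defining $h_{\mathcal O}$ in Definition \ref{cspc}. For $\alpha=\beta+1$, write $h':=h_{\mathcal O_\beta,V_\beta}$ and recall from Theorem \ref{pasting2} that $h_{\mathcal O}=\sum_{n\geq 0}T_n$ with $T_n:=(h'\partial_I)^nh'$. First I would check that $h'$ itself satisfies the Leibniz rule $h'(a\circ_1 b)=h'(a)\circ_1 b+(-1)^{\abs{a}}i_0p(a)\circ_1 h'(b)$ on arity-$1$ elements: this is immediate from Definition \ref{cispc} and Lemma \ref{tensorsdr}, because in arities $0$ and $1$ all labeled trees are linear chains and $h'$ is the iterated tensor product SDR chain homotopy along such a chain. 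Next I would record the identities $h'i_0=0$, $ph'=0$, $(h')^2=0$, $pi_0=1_{\mathcal O}$, $\partial_Ii_0=i_0\partial$, and $p\partial_I=\partial p$, which all follow from Definition \ref{primen}, the definition of $\partial_I$ in Theorem \ref{pasting2}, and the fact that $i_0$ and $p$ are DG-operad maps. Since every $T_n$ has outermost factor $h'$, both $pT_n=0$ and $h'T_n=0$ for every $n\geq 0$.

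I would then prove by induction on $n\geq 0$ that each $T_n$ satisfies the same Leibniz rule as $h'$. The base case $n=0$ is this Leibniz rule. For the step, write $T_n=h'\partial_I T_{n-1}$ and expand $T_n(a\circ_1 b)$ by successively applying the Leibniz rule for $T_{n-1}$, the operadic Leibniz rule for the derivation $\partial_I$, and the Leibniz rule for $h'$. Among the resulting terms, all those containing a factor $h'T_{n-1}$, $pT_{n-1}$, or $h'i_0$ vanish. The two ``cross terms'' that survive a priori are $i_0p\partial_IT_{n-1}(a)\circ_1 h'(b)$ and $h'\partial_Ii_0p(a)\circ_1 T_{n-1}(b)$; using $p\partial_I=\partial p$ the first rewrites with a factor $i_0\partial p T_{n-1}(a)=0$, and using $\partial_Ii_0=i_0\partial$ the second rewrites with a factor $h'i_0\partial p(a)=0$. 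What survives is precisely $T_n(a)\circ_1 b+(-1)^{\abs{a}}i_0p(a)\circ_1 T_n(b)$, after simplifying by $i_0pi_0p=i_0p$.

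For fixed $a$ and $b$, the expansion $h_{\mathcal O}(a\circ_1 b)=\sum_n T_n(a\circ_1 b)$ is a finite sum by Remark \ref{filtration}, so summing the Leibniz identities for the $T_n$ yields the formula for $h_{\mathcal O}$. The main obstacle is keeping track of the four Koszul-signed terms produced by the successive applications of Leibniz; the conceptual core is the cancellation of the two cross terms, which rests on the compatibility equations $p\partial_I=\partial p$, $\partial_Ii_0=i_0\partial$ combined with the SDR identities $ph'=0$ and $h'i_0=0$.
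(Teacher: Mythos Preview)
Your strategy is sound and genuinely different from the paper's, but there is a gap in the step where you claim the Leibniz rule for $h'=h_{\mathcal O_\beta,V_\beta}$ is ``immediate from Definition \ref{cispc} and Lemma \ref{tensorsdr}.'' Composition $\circ_1$ in the coproduct $I\mathcal O_\beta\amalg\mathcal F(IV_\beta)$ is \emph{not} just concatenation of tensor factors: the top $I\mathcal O_\beta$-label of the first linear chain is merged, via $\circ_1$ in $I\mathcal O_\beta$, with the bottom $I\mathcal O_\beta$-label of the second. In the tensor-product-SDR formula for $h'(a\circ_1 b)$, the summand where the homotopy lands on that merged vertex contributes $h_{\mathcal O_\beta}(x\circ_1 x')$, and splitting this into the two pieces needed for the Leibniz rule is exactly the identity $h_{\mathcal O_\beta}(x\circ_1 x')=h_{\mathcal O_\beta}(x)\circ_1 x'+(-1)^{\abs{x}}i_0p(x)\circ_1 h_{\mathcal O_\beta}(x')$. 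That is your outer induction hypothesis on $\alpha$, which you set up but never invoke. Once you insert it here, your argument for $h'$ goes through, and the inner induction on $n$ for the $T_n$ is fine as written (the cancellations via $p\partial_I=\partial p$, $\partial_I i_0=i_0\partial$, $ph'=0$, $h'i_0=0$, $(h')^2=0$ are all correct).

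The paper takes a different route: it proves directly the closed formula
\[
h_{\mathcal O}i_1(x_1\circ_1\cdots\circ_1 x_n)=\sum_{j}(-1)^{\sum_{k<j}\abs{x_k}}i_0(x_1)\circ_1\cdots\circ_1\sigma(x_j)\circ_1 i_1(x_{j+1})\circ_1\cdots\circ_1 i_1(x_n)
\]
for generators $x_i\in V_{\beta_i}$, by a double induction (on $\max\beta_i$ and on filtration degree), using Remark \ref{hrec} together with the vanishing criterion of Lemma \ref{vanishing} to pass from $h_{\mathcal O_\gamma,V_\gamma}$ to $h_{\mathcal O_{\gamma+1}}$. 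The Leibniz identity in the statement is then read off from this explicit formula. Your approach is more conceptual: you prove the stronger Leibniz rule for $h_{\mathcal O}$ on all of $I\mathcal O$ (not just on the image of $i_1$), you avoid Lemma \ref{vanishing} entirely, and you work directly with the perturbation series rather than its recursive characterization. The paper's approach has the advantage of producing the explicit closed formula for $h_{\mathcal O}i_1$ as a byproduct.
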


\begin{proof}
	A simple computation shows that the statement follows if we prove that, given $x_i\in V_{\beta_i}(1)$, $1\leq i< n$, and $x_n\in V_{\beta_n}$,
	\begin{align*}
		\hspace{10pt}&\hspace{-10pt}h_{\mathcal O}i_1(x_1\circ_1\cdots\circ_1x_n)\\
		&=h_{\mathcal O_{\gamma+1}}i_1(x_1\circ_1\cdots\circ_1x_n)\\
		&=h_{\mathcal O_\gamma,V_\gamma}i_1(x_1\circ_1\cdots\circ_1x_n)
		\\&=\sum_{j=1}^n(-1)^{\sum\limits_{k={1}}^{j-1}\abs{x_k}} i_0(x_1)\circ_1\cdots\circ_1i_0(x_{j-1})\circ_1\sigma(x_j)\circ_1i_1(x_{j+1})
		\circ_1\cdots\circ_1i_1(x_n),
	\end{align*}
	where $\gamma=\max\{\beta_1,\dots,\beta_n\}$. The first equation holds by definition. We check the other two ones by induction, first on $\gamma$ and then on the filtration degree of $x_1\circ_1\cdots\circ_1x_n\in\mathcal O_\gamma\amalg_{\partial_\gamma}\mathcal F(V_\gamma)$ with respect to the filtration in the proof of Theorem \ref{pasting2}, i.e.~on the (positive) amount of numbers $1\leq i\leq n$ with $\beta_i=\gamma$. Let $i$ be the smallest $1\leq i\leq n$ such that $\beta_i=\gamma$. Then 
	\begin{align*}
	\hspace{20pt}&\hspace{-20pt}h_{\mathcal O_\gamma,V_\gamma}(x_1\circ_1\cdots\circ_1x_n)\\
	={}&h_{\mathcal O_\gamma}i_1(x_1\circ_1\cdots\circ_1x_{i-1})\circ_1i_1(x_i\circ_1\cdots\circ_1x_n)\\
	&+(-1)^{\sum\limits_{j=0}^{i-1}\abs{x_j}}i_0pi_1(x_1\circ_1\cdots\circ_1x_{i-1})\circ_1h_{\mathcal O_\gamma,V_\gamma}i_1(x_i\circ_1\cdots\circ_1x_n)\\
	={}&h_{\mathcal O_\gamma}i_1(x_1\circ_1\cdots\circ_1x_{i-1})\circ_1i_1(x_i\circ_1\cdots\circ_1x_n)\\
	&+(-1)^{\sum\limits_{j=1}^{i-1}\abs{x_j}}i_0pi_1(x_1\circ_1\cdots\circ_1x_{i-1})\circ_1h_{I}i_1(x_i)\circ_1i_1(x_{i+1}\circ_1\cdots\circ_1x_n)\\
	&+(-1)^{\sum\limits_{j=1}^i\abs{x_j}}i_0pi_1(x_1\circ_1\cdots\circ_1x_{i-1})\circ_1i_0pi_1(x_i)\circ_1h_{\mathcal O_\gamma,V_\gamma}i_1(x_{i+1}\circ_1\cdots\circ_1x_n)\\
	={}&\sum_{j=1}^{i-1}(-1)^{\sum\limits_{k=1}^{j-1}\abs{x_k}}i_0(x_1)\circ_1\cdots\circ_1i_0(x_{j-1})\circ_1\sigma(x_j)\circ_1i_1(x_{j+1})\circ_1\cdots\circ_1i_1(x_n)\\
	&+(-1)^{\sum\limits_{j=1}^{i-1}\abs{x_j}}i_0(x_1)\circ_1\cdots\circ_1i_0(x_{i-1})\circ_1\sigma(x_i)\circ_1i_1(x_{i+1})\circ_1\cdots i_1(\circ_1x_n)\\
	&+\sum_{j=i+1}^{n}(-1)^{\sum\limits_{k=1}^{j-1}\abs{x_k}}i_0(x_1)\circ_1\cdots\circ_1i_0(x_{j-1})\circ_1\sigma(x_j)\circ_1i_1(x_{j+1})\circ_1\cdots\circ_1i_1(x_n)\\
	={}&\sum_{j=1}^n(-1)^{\sum\limits_{k={1}}^{j-1}\abs{x_k}} i_0(x_1)\circ_1\cdots\circ_1i_0(x_{j-1})\circ_1\sigma(x_j)\circ_1i_1(x_{j+1})
	\circ_1\cdots\circ_1i_1(x_n).
	\end{align*}
	In the first two equations, we use Remark \ref{formulae}. In the third one, we apply the two induction hypotheses. Indeed, $x_1\circ_1\cdots\circ_1x_{i-1}\in\mathcal O_\gamma$ and either $x_{i+1}\circ_1\cdots\circ_1x_n\in\mathcal O_\gamma\amalg\mathcal F(V_\gamma)$ has positive but smaller filtration degree or $x_{i+1}\circ_1\cdots\circ_1x_n\in\mathcal O_\gamma$. 
	
	In order to check that $h_{\mathcal O_{\gamma+1}}(x_1\circ_1\cdots\circ_1x_n)=h_{\mathcal O_\gamma,V_\gamma}(x_1\circ_1\cdots\circ_1x_n)$, observe that
	\begin{align*}
		\hspace{-15pt}&\hspace{-15pt}\partial_{\gamma,I}(i_0(x_1)\circ_1\cdots\circ_1i_0(x_{j-1})\circ_1\sigma(x_j)\circ_1i_1(x_{j+1})
		\circ_1\cdots\circ_1i_1(x_n))\\
		={}&-(-1)^{\sum\limits_{k=1}^{j-1}\abs{x_k}}i_0(x_1)\circ_1\cdots\circ_1i_0(x_{j-1})\circ_1h_{\mathcal O_\gamma}i_1\partial_{\gamma}(x_j)\circ_1i_1(x_{j+1})
		\circ_1\cdots\circ_1i_1(x_n)\\
		&-(-1)^{\sum\limits_{k=1}^j\abs{x_k}}i_0(x_1)\circ_1\cdots\circ_1i_0(x_{j-1})\circ_1\sigma(x_j)\circ_1i_1\partial_{\gamma}(x_{j+1}
		\circ_1\cdots\circ_1x_n).
	\end{align*}
	The second summand is clearly a linear combination of standard labeled trees as in Lemma \ref{vanishing}, hence
	\[h_{\mathcal O_{\gamma+1}}(i_0(x_1)\circ_1\cdots\circ_1i_0(x_{j-1})\circ_1\sigma(x_j)\circ_1i_1\partial_{\gamma}(x_{j+1}
	\circ_1\cdots\circ_1x_n))=0.\]
	Moreover, using the induction hypothesis on $h_{\mathcal O_\gamma}i_1$, we see that the first summand is also such a linear combination, therefore
	\[h_{\mathcal O_\gamma,V_\gamma}(i_0(x_1)\circ_1\cdots\circ_1i_0(x_{j-1})\circ_1h_{\mathcal O_\gamma}i_1\partial_{\gamma}(x_j)\circ_1i_1(x_{j+1})
	\circ_1\cdots\circ_1i_1(x_n))=0.\]
	This proves that
	$h_{\mathcal O_{\gamma+1}}(x_1\circ_1\cdots\circ_1x_n)=h_{\mathcal O_\gamma,V_\gamma}(x_1\circ_1\cdots\circ_1x_n)$ by Remark \ref{hrec}.
\end{proof}

In the conditions of the previous lemma, the operad $\mathcal O$ consists of just a DG-algebra $\mathcal O(1)$ and a left $\mathcal O(1)$-module $\mathcal O(0)$. It is trivial in  higher arities.	The full computation of $h_{\mathcal O}i_{1}$ has been possible in this case since all labeled trees in $\mathcal O$ are linear and  the path order coincides with the linear order.

\section{Linear DG-operads}\label{linear}

In this section we analyze the canonical strong pseudo-cylinder construction in a class of relatively pseudo-cellular DG-operads, that we call \emph{linear}, where formulas are easy. Classical examples, such as the $A$-infinity DG-operad, are not linear, but relative examples do show up, as we will see below. %Our main motivation is a work in progress where we provide tools for the computation of moduli spaces of A-infinity algebra structures by using cofiber sequences involving linear DG-operads and truncations of the $A$-infinity DG-operad.

Given a graded operad $\mathcal O$, recall that an \emph{$\mathcal O$-module} \cite[Definition 1.4]{mfo} is a sequence of graded modules $M=\{M(n)\}_{n\geq 0}$ equipped with compositions, $1\leq i\leq p$, $q\geq 0$
\[\circ_i\colon M(p)\otimes \mathcal O(q)\To M(p+q-1),\qquad \circ_i\colon\mathcal O(p)\otimes M(q)\To M(p+q-1),
\]
satisfying the same laws as graded operads \eqref{operadlaw} when one of the variables is in $M$ and the rest in $\mathcal O$. These are the same as the linear modules introduced in \cite[Definition 2.13]{cmmc} and the infinitesimal bimodules from \cite[\S3.1]{dtrpI}. Any graded operad  is a module over itself, and restriction of scalars is defined in the obvious way. 

The (aritywise) suspension $\Sigma M$ of an $\mathcal O$-module $M$ is again an $\mathcal O$-module with structure
\begin{align*}
(\sigma x)\circ_iy&=\sigma (x\circ_iy),&
y\circ_i(\sigma x)&=(-1)^{\abs{y}}\sigma(y\circ_ix),&
x\in M,\quad y\in\mathcal O.
\end{align*}

Suppose now that $\mathcal O$ is a relatively pseudo-cellular DG-operad of length $\alpha$, as in Definition \ref{rpc}. Recall that its underlying graded operad is $\mathcal O_0\amalg\mathcal F(V)$, $V=\bigoplus_{\beta<\alpha}V_{\beta}$. The sub-$\mathcal O_0$-module of $\mathcal O$ spanned by the identity element $\id{\mathcal O}$ is $\mathcal O_0$. The sub-$\mathcal O_0$-module $\langle V\rangle_{\mathcal O_0}\subset \mathcal O$ spanned by $V$ corresponds, in the direct sum decomposition for graded operad coproducts \eqref{coparb}, to the direct subsum indexed by the trees with exactly one inner vertex of even level, 
\[
\begin{tikzpicture}[level distance = 6mm, sibling distance = 4.8mm ]
\tikzstyle{level 3}=[level distance = 5mm, sibling distance = 5.5mm]
\tikzstyle{level 4}=[level distance = 4mm, sibling distance = 3.8mm]
\node [inner sep =0pt] {} [grow'=up]
child{[fill] circle (2pt)
	child{}
	child{node {$\cdots$} edge from parent [draw=none]}
	child{node {$\cdots$} edge from parent [draw=none]}
	child{[fill] circle (2pt)
		child{[fill] circle (2pt)
                  child{}
    child{node {$\cdots$} edge from parent [draw=none]}
    child{}}
		child{node {$\cdots$} edge from parent [draw=none]}
		child{[fill] circle (2pt)
child{}
    child{node {$\cdots$} edge from parent [draw=none]}
child{}}
}
	child{node {$\cdots$} edge from parent [draw=none]}
	child{}
	edge from parent [solid, thin]};
\end{tikzpicture}
\]
and it is freely generated by $V$, compare \cite[Proposition 18]{dtrpII}. Moreover, it clearly satisfies \[\Sigma \langle V\rangle_{\mathcal O_0}=\langle \Sigma V\rangle_{\mathcal O_0}.\]

We say that $\mathcal O$ is \emph{linear} if the restriction of the differential to the generators of the free part decomposes as
\[\xymatrix{
 V\ar[r]^-{\left(\begin{smallmatrix}
		d^0\\ d^1
		\end{smallmatrix}\right)}&\mathcal O_0\oplus \langle V\rangle_{\mathcal O_0}\subset \mathcal O.}\]
We call $d^{0}$ the \emph{constant part} of the differential. If $d^{0}=0$, we say that $\mathcal O$ is \emph{strictly linear}. %Example \ref{unital} contains the only non-strictly linear DG-operad considered here.

\begin{prop}\label{cyllin}
	If $\mathcal O$ is a linear DG-operad as above, the canonical strong pseudo-cylinder $I\mathcal O$ has underlying graded operad $\mathcal O_0\amalg\mathcal F(IV)$, and the differential is defined on the free part by the following formulas, $x\in V$,
	\begin{align*}
		di_0(x)&=i_0d(x), &
		di_1(x)&=i_1d(x), &
		d(\sigma x)&=i_0(x)-i_1(x)-\sigma d^1(x).
	\end{align*}
\end{prop}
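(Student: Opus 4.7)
The underlying graded structure $I\mathcal O=\mathcal O_0 \amalg \mathcal F(IV)$ follows directly from the inductive construction in Definition \ref{cspc}: it is preserved by Theorem \ref{pasting2} at successor steps and by continuity at limit ordinals. The first two equations $di_j(x)=i_jd(x)$ are automatic since $i_0,i_1\colon \mathcal O\to I\mathcal O$ are DG-operad maps at every filtration stage.

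For the third equation, I fix $x\in V_\beta$ and apply \eqref{dsigma2}, which gives $d\sigma(x) = i_0(x) - i_1(x) - h_{\mathcal O_\beta} i_1 \partial_\beta(x)$. Linearity yields a decomposition $\partial_\beta(x) = \partial_\beta^0(x) + \partial_\beta^1(x)$ with $\partial_\beta^0(x) \in \mathcal O_0$ and $\partial_\beta^1(x) \in \langle V_{<\beta}\rangle_{\mathcal O_0}$; comparing with the tautological identity $d(x) = d_{V_\beta}(x) + \partial_\beta(x)$ in the twisted coproduct, we get $d^0(x) = \partial_\beta^0(x)$ and $d^1(x) = d_{V_\beta}(x) + \partial_\beta^1(x)$. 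Corollary \ref{uno} kills the $\mathcal O_0$-piece: $h_{\mathcal O_\beta}(i_1(\partial_\beta^0(x))) = h_{\mathcal O_\beta}(\partial_\beta^0(x)) = 0$. So the proposition reduces to the auxiliary identity
\[
h_{\mathcal O_\beta}(i_1(u)) = \sigma u \qquad \text{for all } u \in \langle V_{<\beta} \rangle_{\mathcal O_0},
\]
where $\sigma u$ is interpreted through the natural embedding $\Sigma \langle V_{<\beta}\rangle_{\mathcal O_0} \hookrightarrow I\mathcal O_\beta$ that replaces the $V$-label $v$ in a tree $u = y_0(y_1,\dots,v,\dots,y_n)$ by $\sigma(v) \in \Sigma V \subset IV$, with the appropriate Koszul sign.

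This auxiliary identity I would establish by transfinite induction on $\beta$. The base case $\beta=0$ is vacuous, and the limit step follows from the continuity of the construction together with Lemma \ref{computa0}. For the successor step $\beta \to \beta+1$, if $u$ already lies in $\langle V_{<\beta}\rangle_{\mathcal O_0}$ then Lemma \ref{computa0} gives $h_{\mathcal O_{\beta+1}}(i_1 u) = h_{\mathcal O_\beta}(i_1 u) = \sigma u$ by the inductive hypothesis. Otherwise $u = y_0(y_1,\dots,y_{i-1},v,y_{i+1},\dots,y_n)$ with $v \in V_\beta$ and $y_j \in \mathcal O_0$, and Remark \ref{hrec} yields $h_{\mathcal O_{\beta+1}}(i_1 u) = h(i_1 u) + h_{\mathcal O_{\beta+1}}\,\partial_{\beta,I}\,h(i_1 u)$ with $h = h_{\mathcal O_\beta,V_\beta}$. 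Applying the tensor product SDR formula (Lemma \ref{tensorsdr}, Remark \ref{formulae}) to $i_1(u)$, only the term where $h_I$ hits $i_1(v)=\sigma v$ survives (by Corollary \ref{uno} the $\mathcal O_0$-slots are killed), and that term is precisely $\sigma u$. For the correction, Leibniz plus the matrix formula in Theorem \ref{pasting2} give $\partial_{\beta,I}(\sigma v) = -h_{\mathcal O_\beta}i_1\partial_\beta(v) = -\sigma \partial_\beta^1(v)$ (using Corollary \ref{uno} on the $\mathcal O_0$-piece and the inductive hypothesis on the $\langle V_{<\beta}\rangle_{\mathcal O_0}$-piece); propagating through the tree yields $\partial_{\beta,I}(\sigma u) = -\sigma w$ for some $w \in \langle V_{<\beta}\rangle_{\mathcal O_0}$, whence $h_{\mathcal O_{\beta+1}}(-\sigma w) = h_{\mathcal O_\beta}(-\sigma w) = -h_{\mathcal O_\beta}^2(i_1 w) = 0$ by Lemma \ref{computa0}, the inductive hypothesis, and the SDR identity $h^2 = 0$ from Definition \ref{primen}.

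The main obstacle is the careful tracking of Koszul signs, both in the iterated tensor SDR formula and in the identification $\Sigma \langle V\rangle_{\mathcal O_0} \hookrightarrow I\mathcal O$; the structural point that closes the argument is that $h^2 = 0$ truncates the a priori infinite perturbation series for $h_{\partial_\beta}$ after its leading term, leaving only the clean tensor SDR contribution $\sigma u$.
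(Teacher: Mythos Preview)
Your proof is correct and follows essentially the same strategy as the paper's: reduce via \eqref{dsigma2} to an auxiliary statement about $h_{\mathcal O_\beta}$ on $\langle IV\rangle_{\mathcal O_0}$, prove it by transfinite induction on the length, and at successor steps compute the leading tensor-SDR term and show the perturbation correction vanishes. The paper formulates the auxiliary statement slightly more broadly (it tracks the full $\mathcal O_0$-module map $h_{\mathcal O}$ on $\mathcal O_0\oplus\langle IV\rangle_{\mathcal O_0}$ as an explicit $3\times 4$ block matrix, not only its restriction to $i_1\langle V\rangle_{\mathcal O_0}$) and verifies the vanishing of the correction by a direct matrix product $\phi=h_{\mathcal O_\beta}\partial_{\beta,I}h_I=0$; you instead isolate exactly what is needed and kill the correction with the SDR axiom $h_{\mathcal O_\beta}^2=0$ after rewriting $\sigma w=h_{\mathcal O_\beta}(i_1 w)$ via the inductive hypothesis. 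These are two packagings of the same mechanism: the paper's zero column for $\langle\Sigma V\rangle_{\mathcal O_0}$ in the matrix is precisely your $h^2=0$ statement. Your version is a bit more conceptual; the paper's yields the stronger byproduct that $h_{\mathcal O}$ is an $\mathcal O_0$-module map on the linear part, which is occasionally useful downstream.
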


%\begin{rem}
%	The third formula holds, not only for any $x\in V$, but more generally for any $x\in\langle V\rangle_{\mathcal O_0}$. It is enough to check that, if it holds for some $x\in\langle V\rangle_{\mathcal O_0}$ and $y\in\mathcal O_0$, then it also holds for $x\circ_iy$ and $y\circ_ix$,
%	\begin{align*}
%		d(\sigma(x\circ_iy))={}&d(\sigma(x)\circ_iy)\\
%		={}&d(\sigma x)\circ_iy+(-1)^{\abs{x}+1}\sigma x\circ_id(y)\\
%		={}&i_0(x)\circ_iy-i_1(x)\circ_iy-\sigma d^1(x)\circ_iy-(-1)^{\abs{x}}\sigma x\circ_id(y)
%	\end{align*}
%\end{rem}

\begin{proof}
The proof of this proposition is intertwined with the proof of the following technical statement. 
We check, by induction on the length $\alpha$, that the chain homotopy $h_{\mathcal O}\colon I\mathcal O\r I\mathcal O$ (co)restricts to the following $\mathcal O_{0}$-module morphism of degree $+1$,
\[\xymatrix@C=10pt{\mathcal O_0\oplus\langle IV\rangle_{\mathcal O_{0}}
	=\mathcal O_0\oplus\langle V\rangle_{\mathcal O_{0}}\oplus \langle \Sigma V\rangle_{\mathcal O_{0}}\oplus \langle V\rangle_{\mathcal O_{0}}
	\ar[d]^-{\left(
		\begin{smallmatrix}
		0&0&0&0\\0&0&0&\sigma\\0&0&0&0
		\end{smallmatrix}
		\right)}\\ \;\langle IV\rangle_{\mathcal O_{0}}=\langle V\rangle_{\mathcal O_{0}}\oplus \langle \Sigma V\rangle_{\mathcal O_{0}}\oplus \langle V\rangle_{\mathcal O_{0}}.}\]
		
The case $\alpha=0$ is obvious since we are taking the trivial strong pseudo-cylinder on $\mathcal O_0$. If $\alpha$ is a limit ordinal, it follows by continuity.  Let $\alpha=\beta+1$ be a successor. The restriction of $h_{\mathcal O}$ to $\mathcal O_0$ coincides with $h_{\mathcal O_0}=0$. Hence, it suffices to prove that, for any 
$y\in IV_\beta(n)$, and $x_0,\dots, x_{n}\in \mathcal O_{0}$, 
\[h_{\mathcal O}\left(\begin{array}{c}\begin{tikzpicture}[level distance = 6mm, sibling distance = 4.8mm ]
\tikzstyle{level 3}=[level distance = 5mm, sibling distance = 5.5mm]
\tikzstyle{level 4}=[level distance = 4mm, sibling distance = 3.8mm]
\node [inner sep =0pt] {} [grow'=up]
child{[fill] circle (2pt)
	child{}
	child{node {$\cdots$} edge from parent [draw=none]}
	child{node {$\cdots$} edge from parent [draw=none]}
	child{[fill] circle (2pt)
		child{[fill] circle (2pt)
                  child{}
    child{node {$\cdots$} edge from parent [draw=none]}
    child{}
node [below left] {$\scriptstyle x_1$}}
		child{node {$\cdots$} edge from parent [draw=none]}
		child{[fill] circle (2pt)
child{}
    child{node {$\cdots$} edge from parent [draw=none]}
child{}
node [below right] {$\scriptstyle x_n$}}
		node [below left] {$\scriptstyle y$}}
	child{node {$\cdots$} edge from parent [draw=none]}
	child{}
	node [below right] {$\scriptstyle x_0$}
	edge from parent [solid, thin]};
\end{tikzpicture}\end{array}\right)=
(-1)^{\abs{x_0}}\begin{array}{c}\begin{tikzpicture}[level distance = 6mm, sibling distance = 4.8mm ]
\tikzstyle{level 3}=[level distance = 5mm, sibling distance = 5.5mm]
\tikzstyle{level 4}=[level distance = 4mm, sibling distance = 3.8mm]
\node [inner sep =0pt] {} [grow'=up]
child{[fill] circle (2pt)
  child{}
  child{node {$\cdots$} edge from parent [draw=none]}
  child{node {$\cdots$} edge from parent [draw=none]}
  child{[fill] circle (2pt)
    child{[fill] circle (2pt)
                  child{}
    child{node {$\cdots$} edge from parent [draw=none]}
    child{}
node [below left] {$\scriptstyle x_1$}}
    child{node {$\cdots$} edge from parent [draw=none]}
    child{[fill] circle (2pt)
child{}
    child{node {$\cdots$} edge from parent [draw=none]}
child{}
node [below right] {$\scriptstyle x_n$}}
    node [below left] {$\scriptscriptstyle h_{I}(y)$}}
  child{node {$\cdots$} edge from parent [draw=none]}
  child{}
  node [below right] {$\scriptstyle x_0$}
  edge from parent [solid, thin]};
\end{tikzpicture}\end{array}.\]
%\[h_{\mathcal O}(x_{0}\circ_{i}(y(x_{1},\dots, x_{n}))=(-1)^{\abs{x_{0}}}x_{0}\circ_{i}(h_{I} (y)(x_{1},\dots, x_{n})).\]

The element
\[
\begin{array}{c}
\begin{tikzpicture}[level distance = 6mm, sibling distance = 4.8mm ]
\tikzstyle{level 3}=[level distance = 5mm, sibling distance = 5.5mm]
\tikzstyle{level 4}=[level distance = 4mm, sibling distance = 3.8mm]
\node [inner sep =0pt] {} [grow'=up]
child{[fill] circle (2pt)
	child{}
	child{node {$\cdots$} edge from parent [draw=none]}
	child{node {$\cdots$} edge from parent [draw=none]}
	child{[fill] circle (2pt)
		child{[fill] circle (2pt)
                  child{}
    child{node {$\cdots$} edge from parent [draw=none]}
    child{}
node [below left] {$\scriptstyle x_1$}}
		child{node {$\cdots$} edge from parent [draw=none]}
		child{[fill] circle (2pt)
child{}
    child{node {$\cdots$} edge from parent [draw=none]}
child{}
node [below right] {$\scriptstyle x_n$}}
		node [below left] {$\scriptstyle y$}}
	child{node {$\cdots$} edge from parent [draw=none]}
	child{}
	node [below right] {$\scriptstyle x_0$}
	edge from parent [solid, thin]};
\end{tikzpicture}
\end{array}\in I\mathcal O=I\mathcal O_{\beta}\amalg_{\partial_{\beta,I}}\mathcal F(IV_{\beta})
\]
has filtration level $1$, with respect to the filtration in the proof of Theorem \ref{pasting2}. Therefore, by Remark  \ref{filtration},
\begin{align*}
h_{\mathcal O}\left(\begin{array}{c}\begin{tikzpicture}[level distance = 6mm, sibling distance = 4.8mm ]
\tikzstyle{level 3}=[level distance = 5mm, sibling distance = 5.5mm]
\tikzstyle{level 4}=[level distance = 4mm, sibling distance = 3.8mm]
\node [inner sep =0pt] {} [grow'=up]
child{[fill] circle (2pt)
	child{}
	child{node {$\cdots$} edge from parent [draw=none]}
	child{node {$\cdots$} edge from parent [draw=none]}
	child{[fill] circle (2pt)
		child{[fill] circle (2pt)
                  child{}
    child{node {$\cdots$} edge from parent [draw=none]}
    child{}
node [below left] {$\scriptstyle x_1$}}
		child{node {$\cdots$} edge from parent [draw=none]}
		child{[fill] circle (2pt)
child{}
    child{node {$\cdots$} edge from parent [draw=none]}
child{}
node [below right] {$\scriptstyle x_n$}}
		node [below left] {$\scriptstyle y$}}
	child{node {$\cdots$} edge from parent [draw=none]}
	child{}
	node [below right] {$\scriptstyle x_0$}
	edge from parent [solid, thin]};
\end{tikzpicture}\end{array}\right)={}&
h_{\mathcal O_{\beta},V_{\beta}}\left(\begin{array}{c}\begin{tikzpicture}[level distance = 6mm, sibling distance = 4.8mm ]
\tikzstyle{level 3}=[level distance = 5mm, sibling distance = 5.5mm]
\tikzstyle{level 4}=[level distance = 4mm, sibling distance = 3.8mm]
\node [inner sep =0pt] {} [grow'=up]
child{[fill] circle (2pt)
	child{}
	child{node {$\cdots$} edge from parent [draw=none]}
	child{node {$\cdots$} edge from parent [draw=none]}
	child{[fill] circle (2pt)
		child{[fill] circle (2pt)
                  child{}
    child{node {$\cdots$} edge from parent [draw=none]}
    child{}
node [below left] {$\scriptstyle x_1$}}
		child{node {$\cdots$} edge from parent [draw=none]}
		child{[fill] circle (2pt)
child{}
    child{node {$\cdots$} edge from parent [draw=none]}
child{}
node [below right] {$\scriptstyle x_n$}}
		node [below left] {$\scriptstyle y$}}
	child{node {$\cdots$} edge from parent [draw=none]}
	child{}
	node [below right] {$\scriptstyle x_0$}
	edge from parent [solid, thin]};
\end{tikzpicture}\end{array}\right)
\\&+
h_{\mathcal O_{\beta}}\partial_{\beta,I}h_{\mathcal O_{\beta},V_{\beta}}\left(\begin{array}{c}\begin{tikzpicture}[level distance = 6mm, sibling distance = 4.8mm ]
\tikzstyle{level 3}=[level distance = 5mm, sibling distance = 5.5mm]
\tikzstyle{level 4}=[level distance = 4mm, sibling distance = 3.8mm]
\node [inner sep =0pt] {} [grow'=up]
child{[fill] circle (2pt)
	child{}
	child{node {$\cdots$} edge from parent [draw=none]}
	child{node {$\cdots$} edge from parent [draw=none]}
	child{[fill] circle (2pt)
		child{[fill] circle (2pt)
                  child{}
    child{node {$\cdots$} edge from parent [draw=none]}
    child{}
node [below left] {$\scriptstyle x_1$}}
		child{node {$\cdots$} edge from parent [draw=none]}
		child{[fill] circle (2pt)
child{}
    child{node {$\cdots$} edge from parent [draw=none]}
child{}
node [below right] {$\scriptstyle x_n$}}
		node [below left] {$\scriptstyle y$}}
	child{node {$\cdots$} edge from parent [draw=none]}
	child{}
	node [below right] {$\scriptstyle x_0$}
	edge from parent [solid, thin]};
\end{tikzpicture}\end{array}\right).
\end{align*}
By Remark \ref{formulae}, since we are taking the trivial strong pseudo-cylinder on $\mathcal O_{0}$,
\[h_{\mathcal O_\beta,V_\beta}\left(\begin{array}{c}\begin{tikzpicture}[level distance = 6mm, sibling distance = 4.8mm ]
\tikzstyle{level 3}=[level distance = 5mm, sibling distance = 5.5mm]
\tikzstyle{level 4}=[level distance = 4mm, sibling distance = 3.8mm]
\node [inner sep =0pt] {} [grow'=up]
child{[fill] circle (2pt)
	child{}
	child{node {$\cdots$} edge from parent [draw=none]}
	child{node {$\cdots$} edge from parent [draw=none]}
	child{[fill] circle (2pt)
		child{[fill] circle (2pt)
                  child{}
    child{node {$\cdots$} edge from parent [draw=none]}
    child{}
node [below left] {$\scriptstyle x_1$}}
		child{node {$\cdots$} edge from parent [draw=none]}
		child{[fill] circle (2pt)
child{}
    child{node {$\cdots$} edge from parent [draw=none]}
child{}
node [below right] {$\scriptstyle x_n$}}
		node [below left] {$\scriptstyle y$}}
	child{node {$\cdots$} edge from parent [draw=none]}
	child{}
	node [below right] {$\scriptstyle x_0$}
	edge from parent [solid, thin]};
\end{tikzpicture}\end{array}\right)=(-1)^{\abs{x_0}}
\begin{array}{c}\begin{tikzpicture}[level distance = 6mm, sibling distance = 4.8mm ]
\tikzstyle{level 3}=[level distance = 5mm, sibling distance = 5.5mm]
\tikzstyle{level 4}=[level distance = 4mm, sibling distance = 3.8mm]
\node [inner sep =0pt] {} [grow'=up]
child{[fill] circle (2pt)
  child{}
  child{node {$\cdots$} edge from parent [draw=none]}
  child{node {$\cdots$} edge from parent [draw=none]}
  child{[fill] circle (2pt)
    child{[fill] circle (2pt)
                  child{}
    child{node {$\cdots$} edge from parent [draw=none]}
    child{}
node [below left] {$\scriptstyle x_1$}}
    child{node {$\cdots$} edge from parent [draw=none]}
    child{[fill] circle (2pt)
child{}
    child{node {$\cdots$} edge from parent [draw=none]}
child{}
node [below right] {$\scriptstyle x_n$}}
    node [below left] {$\scriptscriptstyle h_{I}(y)$}}
  child{node {$\cdots$} edge from parent [draw=none]}
  child{}
  node [below right] {$\scriptstyle x_0$}
  edge from parent [solid, thin]};
\end{tikzpicture}\end{array}.\]
Moreover, by Remark \ref{formulae2},
\[\partial_{\beta,I}
\left(\begin{array}{c}\begin{tikzpicture}[level distance = 6mm, sibling distance = 4.8mm ]
\tikzstyle{level 3}=[level distance = 5mm, sibling distance = 5.5mm]
\tikzstyle{level 4}=[level distance = 4mm, sibling distance = 3.8mm]
\node [inner sep =0pt] {} [grow'=up]
child{[fill] circle (2pt)
  child{}
  child{node {$\cdots$} edge from parent [draw=none]}
  child{node {$\cdots$} edge from parent [draw=none]}
  child{[fill] circle (2pt)
    child{[fill] circle (2pt)
                  child{}
    child{node {$\cdots$} edge from parent [draw=none]}
    child{}
node [below left] {$\scriptstyle x_1$}}
    child{node {$\cdots$} edge from parent [draw=none]}
    child{[fill] circle (2pt)
child{}
    child{node {$\cdots$} edge from parent [draw=none]}
child{}
node [below right] {$\scriptstyle x_n$}}
    node [below left] {$\scriptscriptstyle h_{I}(y)$}}
  child{node {$\cdots$} edge from parent [draw=none]}
  child{}
  node [below right] {$\scriptstyle x_0$}
  edge from parent [solid, thin]};
\end{tikzpicture}\end{array}\right)
=(-1)^{\abs{x_0}}
\begin{array}{c}\begin{tikzpicture}[level distance = 6mm, sibling distance = 4.8mm ]
\tikzstyle{level 3}=[level distance = 5mm, sibling distance = 5.5mm]
\tikzstyle{level 4}=[level distance = 4mm, sibling distance = 3.8mm]
\node [inner sep =0pt] {} [grow'=up]
child{[fill] circle (2pt)
  child{}
  child{node {} edge from parent [draw=none]}
  child{node {} edge from parent [draw=none]}
  child{[fill] circle (2pt)
    child{[fill] circle (2pt)
                  child{}
    child{node {$\cdots$} edge from parent [draw=none]}
    child{}
node [below left] {$\scriptstyle x_1$}}
    child{node {$\cdots$} edge from parent [draw=none]}
    child{[fill] circle (2pt)
child{}
    child{node {$\cdots$} edge from parent [draw=none]}
child{}
node [below right] {$\scriptstyle x_n$}}
    node [left] {$\scriptscriptstyle \partial_{\beta,I}h_{I}(y)$}}
  child{node {$\cdots$} edge from parent [draw=none]}
  child{}
  node [below right] {$\scriptstyle x_0$}
  edge from parent [solid, thin]};
\end{tikzpicture}\end{array}.\]

By linearity, $\partial_\beta$ factors as 
\[\xymatrix{
	\partial_\beta\colon V_\beta\ar[r]^-{\left(\begin{smallmatrix}
		\partial_\beta^0\\ \partial_\beta^1
		\end{smallmatrix}\right)}&\mathcal O_0\oplus \langle \bigoplus_{\gamma<\beta}V_\gamma\rangle_{\mathcal O_0}\subset \mathcal O_{\beta}.}\]
The maps $i_{0},i_{1}\colon \mathcal O_{\beta}\r I\mathcal O_{\beta}$ (co)restrict to the left and right vertical $\mathcal O_{0}$-module maps in the following diagram, respectively,
\[\xymatrix{
\mathcal O_0\oplus \langle \bigoplus_{\gamma<\beta}V_\gamma\rangle_{\mathcal O_0}
\ar@<-1ex>[d]_-{\left(
    \begin{smallmatrix}
    1&0\\0&1\\0&0\\0&0
    \end{smallmatrix}
    \right)}
	\ar@<1ex>[d]^-{\left(
		\begin{smallmatrix}
		1&0\\0&0\\0&0\\0&1
		\end{smallmatrix}
		\right)}
\\
\mathcal O_0
\oplus \langle \bigoplus_{\gamma<\beta}V_\gamma\rangle_{\mathcal O_0}
\oplus \langle \bigoplus_{\gamma<\beta}\Sigma V_\gamma\rangle_{\mathcal O_0}
\oplus \langle \bigoplus_{\gamma<\beta}V_\gamma\rangle_{\mathcal O_0}
}\]
Since $\partial_{\beta,I}=\left(\begin{smallmatrix}
	i_0\partial_{\beta}&-h_{\mathcal O_{\beta}}i_1\partial_{\beta}&i_1\partial_{\beta}
	\end{smallmatrix}\right)$, using the induction hypothesis on $h_{\mathcal O_{\beta}}$, we deduce that 
\[
h_{\mathcal O_{\beta}}%\partial_{\beta,I}h_{\mathcal O_{\beta},V_{\beta}}
\left(
\begin{array}{c}\begin{tikzpicture}[level distance = 6mm, sibling distance = 4.8mm ]
\tikzstyle{level 3}=[level distance = 5mm, sibling distance = 5.5mm]
\tikzstyle{level 4}=[level distance = 4mm, sibling distance = 3.8mm]
\node [inner sep =0pt] {} [grow'=up]
child{[fill] circle (2pt)
  child{}
  child{node {} edge from parent [draw=none]}
  child{node {} edge from parent [draw=none]}
  child{[fill] circle (2pt)
    child{[fill] circle (2pt)
                  child{}
    child{node {$\cdots$} edge from parent [draw=none]}
    child{}
node [below left] {$\scriptstyle x_1$}}
    child{node {$\cdots$} edge from parent [draw=none]}
    child{[fill] circle (2pt)
child{}
    child{node {$\cdots$} edge from parent [draw=none]}
child{}
node [below right] {$\scriptstyle x_n$}}
    node [left] {$\scriptscriptstyle \partial_{\beta,I}h_{I}(y)$}}
  child{node {$\cdots$} edge from parent [draw=none]}
  child{}
  node [below right] {$\scriptstyle x_0$}
  edge from parent [solid, thin]};
\end{tikzpicture}\end{array}
%\begin{array}{c}\begin{tikzpicture}[level distance = 6mm, sibling distance = 4.8mm ]
%\tikzstyle{level 3}=[level distance = 5mm, sibling distance = 5.5mm]
%\tikzstyle{level 4}=[level distance = 4mm, sibling distance = 3.8mm]
%\node [inner sep =0pt] {} [grow'=up]
%child{[fill] circle (2pt)
%	child{}
%	child{node {$\cdots$} edge from parent [draw=none]}
%	child{node {$\cdots$} edge from parent [draw=none]}
%	child{[fill] circle (2pt)
%		child{[fill] circle (2pt)
%                  child{}
%    child{node {$\cdots$} edge from parent [draw=none]}
%    child{}
%node [below left] {$\scriptstyle x_1$}}
%		child{node {$\cdots$} edge from parent [draw=none]}
%		child{[fill] circle (2pt)
%child{}
%    child{node {$\cdots$} edge from parent [draw=none]}
%child{}
%node [below right] {$\scriptstyle x_n$}}
%		node [below left] {$\scriptstyle y$}}
%	child{node {$\cdots$} edge from parent [draw=none]}
%	child{}
%	node [below right] {$\scriptstyle x_0$}
%	edge from parent [solid, thin]};
%\end{tikzpicture}\end{array}
\right)=(-1)^{\abs{x_0}}
\begin{array}{c}\begin{tikzpicture}[level distance = 6mm, sibling distance = 4.8mm ]
\tikzstyle{level 3}=[level distance = 5mm, sibling distance = 5.5mm]
\tikzstyle{level 4}=[level distance = 4mm, sibling distance = 3.8mm]
\node [inner sep =0pt] {} [grow'=up]
child{[fill] circle (2pt)
	child{}
	child{node {$\cdots$} edge from parent [draw=none]}
	child{node {$\cdots$} edge from parent [draw=none]}
	child{[fill] circle (2pt)
		child{[fill] circle (2pt)
                  child{}
    child{node {$\cdots$} edge from parent [draw=none]}
    child{}
node [below left] {$\scriptstyle x_1$}}
		child{node {$\cdots$} edge from parent [draw=none]}
		child{[fill] circle (2pt)
child{}
    child{node {$\cdots$} edge from parent [draw=none]}
child{}
node [below right] {$\scriptstyle x_n$}}
		node [below left] {$\scriptstyle \phi(y)$}}
	child{node {$\cdots$} edge from parent [draw=none]}
	child{}
	node [below right] {$\scriptstyle x_0$}
	edge from parent [solid, thin]};
\end{tikzpicture}\end{array},
\]
where $\phi=h_{\mathcal O_{\beta}}\partial_{\beta,I}h_{I}$ is the following product of (block) matrices,
\begin{align*}
\phi=
\left(
\begin{smallmatrix}
0&0&0&0\\
0&0&0&\sigma\\
0&0&0&0
\end{smallmatrix}
\right)
\left(\!\!\!
\begin{array}{c|c|c}
\left(
    \begin{smallmatrix}
    1&0\\0&1\\0&0\\0&0
    \end{smallmatrix}
    \right)
\left(\begin{smallmatrix}
		\partial_\beta^0\\ \partial_\beta^1
		\end{smallmatrix}\right)
		&
		\!\!\!\begin{array}{c}
		-
		\left(
\begin{smallmatrix}
0&0&0&0\\
0&0&0&\sigma\\
0&0&0&0
\end{smallmatrix}
\right)
		\left(
		\begin{smallmatrix}
		1&0\\0&0\\0&0\\0&1
		\end{smallmatrix}
		\right)
		\left(\begin{smallmatrix}
		\partial_\beta^0\\ \partial_\beta^1
		\end{smallmatrix}\right)\\[3mm]
		\hline 0
		\end{array}\!\!\!
		&
		\left(
		\begin{smallmatrix}
		1&0\\0&0\\0&0\\0&1
		\end{smallmatrix}
		\right)
		\left(\begin{smallmatrix}
		\partial_\beta^0\\ \partial_\beta^1
		\end{smallmatrix}\right)
\end{array}\!\!\!
\right)
\left(
\begin{smallmatrix}
0&0&0\\
0&0&\sigma\\
0&0&0
\end{smallmatrix}
\right).
\end{align*}
A straightforward computation shows that $\phi=0$, hence the previous claim follows.

As part of the proof of the previous statement, we have shown that, for any linear DG-operad $\mathcal O$ of any length $\alpha$, and for any $\beta<\alpha$, $h_{\mathcal O_{\beta}}i_{1}\partial_{\beta}\colon V_{\beta}\r I\mathcal O_{\beta}$ corestricts to
\[
\left(
\begin{smallmatrix}
0&0&0&0\\
0&0&0&\sigma\\
0&0&0&0
\end{smallmatrix}
\right)
		\left(
		\begin{smallmatrix}
		1&0\\0&0\\0&0\\0&1
		\end{smallmatrix}
		\right)
		\left(\begin{smallmatrix}
		\partial_\beta^0\\ \partial_\beta^1
		\end{smallmatrix}\right)=
		\left(
\begin{smallmatrix}
0\\
\sigma\partial_{\beta}^{1}\\
0
\end{smallmatrix}
\right)\colon V_{\beta}\To \langle \bigoplus_{\gamma<\beta}V_\gamma\rangle_{\mathcal O_0}
\oplus \langle \bigoplus_{\gamma<\beta}\Sigma V_\gamma\rangle_{\mathcal O_0}
\oplus \langle \bigoplus_{\gamma<\beta}V_\gamma\rangle_{\mathcal O_0}.
\]
The statement of the proposition is equivalent to this, see \eqref{dsigma2}.
\end{proof}

\begin{exm}
	Let $\mathcal O$ be the operad obtained by quotienting out $\mu_n$, $n\geq 3$, from $\mathcal A_\infty^D$ in Example \ref{conder}. Its algebras are non-unital DG-algebras with an up-to-homotopy derivation. As a graded operad, $\mathcal O$ is generated by
	$$\mu_{2}\in\mathcal{O}_{\infty}(2)_{0},
	\qquad D_{n}\in\mathcal{O}_{\infty}(n)_{n-1},\quad n\geq 1,$$
	with a single relation \[\mu_2\circ_1\mu_2=\mu_2\circ_2\mu_2,\]
	and differential
	\begin{align*}
	d(\mu_2)={}&0,&
	d(D_n)={}&\mu_2\circ_1D_{n-1}+(-1)^n\mu_2\circ_2D_{n-1}+\sum_{i=1}^{n-1}(-1)^{n+i}D_{n-1}\circ_i\mu_2.%\\	&&&
	\end{align*}
	The operadic suspension $\Lambda\mathcal O$ is therefore generated by 
	$$\mu_{2}\in\Lambda\mathcal{O}_{\infty}(2)_{-1},
	\qquad D_{n}\in\Lambda \mathcal{O}_{\infty}(n)_{0},\quad n\geq 1,$$
	with a single relation, the same as above, % \[\mu_2\circ_1\mu_2=\mu_2\circ_2\mu_2,\]
	and differential
	\begin{align*}
	d(\mu_2)={}&0,&
	d(D_n)={}&\mu_2\{D_{n-1}\}-D_{n-1}\{\mu_2\}.
	\end{align*}
	We regard $\Lambda \mathcal O$ as a relatively cellular DG-operad with $\mathcal O_0$ the associative DG-operad, i.e.~$\Lambda\mathcal O_0$ is the sub-DG-operad generated by $\mu_2$, 
	and \[\Lambda \mathcal O_n=\Lambda \mathcal O_{n-1}\amalg_{\partial_{n-1}}\mathcal F(\Bbbk\cdot D_{n}),\qquad \partial_{n-1}(D_{n})=d(D_{n}).\]
	Then it is strictly linear with
	\[\quad d^1(D_n)=d(D_n),\quad n\geq 1.\]
	Therefore, Proposition \ref{cyllin} yields
	\begin{align*}
	d(\sigma D_n)&=i_0(D_n)-i_1(D_n)-\sigma d(D_n)\\
	&=i_0(D_n)-i_1(D_n)-\sigma(\mu_2\{D_{n-1}\}-D_{n-1}\{\mu_2\})\\
	&=i_0(D_n)-i_1(D_n)+\mu_2\{\sigma D_{n-1}\}+\sigma D_{n-1}\{\mu_2\}.
	\end{align*}
	It is easy to check that this is indeed the formula obtained from \eqref{dsigma2} and the formula for $h_{\Lambda\mathcal A_\infty^D}i_1d(D_n)$ in Example \ref{conder}, killing $\sigma\mu_n$ for $n\geq 2$, $i_0\mu_n,i_1\mu_n$ for $n\geq 3$, and identifying $i_0\mu_2=i_1\mu_2=\mu_2$.
\end{exm}

In the following example, a non-strict linear DG-operad is also considered.

\begin{exm}\label{unital}
	The following linear relatively pseudo-cellular DG-operad $\mathcal O$ appears in \cite{udga}. Fix some $m>0$. As a graded operad, $\mathcal O$ is generated by
	\begin{align*}
		u&\in\mathcal O(0)_0,&
		\mu&\in\mathcal O(2)_0,&
		\nu_n^S&\in\mathcal O(n-m)_{n-2+m},
	\end{align*}
	where $n\geq m$ and $S\subset\{1,\dots,n\}$ runs over all subsets of cardinality $m$, with relations
	\begin{align*}
		\mu\circ_1\mu&=\mu\circ_2\mu,&
		\mu\circ_1u&=\id{}=\mu\circ_2u.
	\end{align*}
	The differential is defined by
	\begin{align*}
	d(u)&=0,&
	d(\mu)&=0;
	\end{align*}
	if $(n,m)\neq (2,1), (1,1)$,
	\begin{align*}
	d(\nu_n^S)={}&(-1)^{n}\mu\circ_1\nu_{n-1}^{S}\quad\text{unless }l_m=n\\
	&+\mu\circ_2\nu_{n-1}^{S-1}\quad\text{unless }l_1=1\\
	&+\hspace{-20pt}\sum_{
		\begin{array}{c}\\[-17pt]
		\scriptstyle 1\leq v\leq m+1\\[-5pt]
		\scriptstyle l_{v-1}< i+v-1< l_{v}-1
		\end{array}
	}
	\hspace{-20pt}
	(-1)^{i+v-1}\nu_{n-1}^{S_{v}\cup(S_{v}'-1)}\circ_i\mu;
	\end{align*}
	and if $m=1$ also
	\begin{align*}
	d(\nu_1^{\{1\}})&=0,&
	d(\nu_2^{\{1\}})&=\mu\circ_1\nu_1^{\{1\}}-\id{},&
	d(\nu_2^{\{2\}})&=\mu\circ_2\nu_1^{\{1\}}-\id{}.%\quad j\in\{1,2\},
	\end{align*}
	Here we denote  
 $S=\{l_{1},\dots,l_{|S|}\}$,  $l_0=0$, $l_{|S|+1}=n+1$,
 $S+t=\{l_{1}+t,\dots,l_{|S|}+t\}$, and
\begin{align*}
S_{v}&=\{l_{1},\dots,l_{v-1}\},&S'_{v}=&S\setminus S_{v}=\{l_{v},\dots,l_{|S|}\},&1\leq v\leq |S|+1.
\end{align*}
Unlike in previous cases, operadic suspension and braces do not simplify the definition of $\mathcal O$.
	
	This DG-operad is relatively cellular with $\mathcal O_0=\cdots=\mathcal O_{m-1}$ the unital associative operad, i.e.~the suboperad generated by $u$ and $\mu$, and
	\[\mathcal O_n=\mathcal O_{n-1}\amalg_{\partial_{n-1}}\mathcal F(\Bbbk\cdot\{\nu_n^S\}_{\substack{S\subset\{1,\dots,n\}\\\hspace{-18pt}\abs{S}=m}}),\qquad n\geq m.\]
	Here $\partial_{n-1}$ is defined as the differential above. Note that $\mathcal O$ is clearly linear, even strictly for $m>1$, but not for $m=1$, since the constant part of the differential satisfies \[d^{0}(\nu_{2}^{\{1\}})=d^{0}(\nu_{2}^{\{2\}})=-\id{}.\]

	Consider the retraction $r\colon\mathcal O\r\mathcal O_0$ defined as follows,
	\begin{align*}
		r(\nu_n^S)&=0\text{ if }(n,m)\neq(1,1),&
		r(\nu_1^{\{1\}})&\;\mapsto\;u\text{ if }m=1.
	\end{align*}
	Compatibility with differentials is checked in \cite[Lemma 5.9]{udga}. Denote by $j\colon\mathcal O_0\r\mathcal O$ the inclusion. The equations in the proof of \cite[Lemma 5.9]{udga} show that there is a homotopy 
	$H\colon I\mathcal O\r\mathcal O$,
	relative to $\mathcal O_0$, from the identity in $\mathcal O$ to $jr$, defined by
	\begin{align*}
		H(i_{0}\nu_{n}^{S})&=\nu_{n}^{S},&
		H(\sigma\nu_n^S)&=(-1)^{l_1+1}\nu_{n+1}^{S+1}\circ_{l_1}u,&
		H(i_{1}\nu_{n}^{S})&=jr(\nu_{n}^{S}).
	\end{align*}
	Therefore $j$ is the inclusion of a strong deformation retract, in particular a homotopy equivalence.
\end{exm}

We finally compute some structure maps on canonical strong pseudo-cylinders of linear DG-operads.

\begin{defn}\label{doubrev}
	Let $\mathcal O$ be a relatively pseudo-cellular DG-operad. 
	Consider the pasting of two canonical strong pseudo-cylinders $I\mathcal O {}_{i_1}\!{\cup}_{i_0}I\mathcal O$ obtained by identifying the top  copy of $\mathcal O$ ($i_1\mathcal O$) in the first $I\mathcal O$ with the bottom copy of $\mathcal O$ ($i_0\mathcal O$) in the second $I\mathcal O$. A \emph{doubling map} is a map \[\nu\colon I\mathcal O\To I\mathcal O{}_{i_1}\!\!\cup_{i_0}I\mathcal O,\]
	compatible with the projections onto $\mathcal O$, sending the bottom (resp.~top) copy of $\mathcal O$ in the source to the bottom (resp.~top) copy of $\mathcal O$ in the first (resp.~second) $I\mathcal O$ in the target. A \emph{reversing map} is a map
	\[\iota\colon I\mathcal O\To I\mathcal O\]
	compatible with the projections onto $\mathcal O$, sending the bottom (resp.~top) copy of $\mathcal O$ in the source to the top (resp.~bottom) copy of $\mathcal O$ in the target.
\end{defn}
	
	Doubling and reversing maps allow to vertically compose and invert homotopies. They are important since, if $\mathcal O$ is \emph{based}, i.e.~equipped with a retraction $\mathcal O\r\mathcal O_0$, they give rise to the up-to-homotopy cogroup structure of the model theoretic (relative) suspension $\Sigma\mathcal O$ of $\mathcal O$ and to the coaction of $\Sigma\mathcal O$ on the relative cone $C\mathcal O$. Using the chain homotopy in $I\mathcal O$, we could give formulas for doubling and reversing maps in all cases. Formulas are not easy in general, so we will content ourserlves with the linear case. 
	
	\begin{prop}\label{doubinvlin}
		Let us place ourselves in the context of Proposition \ref{cyllin}. Denote by $j_0,j_1\colon I\mathcal O\r I\mathcal O{}_{i_1}\!\cup_{i_0}I\mathcal O$ the inclusion of the first and second factor, respectively, which satisfy $j_0i_1=j_1i_0$. The following formulas define a doubling map and a reversing map in the sense of Definition \ref{doubrev}, $x\in V$,
			\begin{align*}
			\nu i_0(x)&=j_0i_0(x),&
			\nu i_1(x)&=j_1i_1(x),&
			\nu(\sigma x)&=j_0\sigma x+j_1\sigma x,\\
				\iota i_0(x)&=i_1(x),&
				\iota i_1(x)&=i_0(x),&
				\iota (\sigma x)&=-\sigma x.
			\end{align*}
	\end{prop}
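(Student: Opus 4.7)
The plan is to read off the structure of $I\mathcal{O}$ from Proposition \ref{cyllin}: as a graded operad $I\mathcal{O} = \mathcal{O}_0 \amalg \mathcal{F}(IV)$, with differential determined on $\mathcal{O}_0$ by $d_{\mathcal{O}_0}$ and on the generators $IV = V \oplus \Sigma V \oplus V$ by the three formulas in that proposition. By the universal property of the twisted coproduct recalled after Definition \ref{tc}, a DG-operad map out of $I\mathcal{O}$ is uniquely given by a DG-operad map on $\mathcal{O}_0$ together with a sequence of graded module maps on $IV$ compatible with the differential on generators.

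For $\nu$, I would take the restriction to $\mathcal{O}_0$ to be the canonical inclusion into the pasting (which equals both $j_0 i_0|_{\mathcal{O}_0}$ and $j_1 i_1|_{\mathcal{O}_0}$, since $i_0 = i_1 = 1$ on $\mathcal{O}_0$ by induction from Definition \ref{cspc} and $j_0 i_1 = j_1 i_0$), and define $\nu$ on generators by the stated formulas. For $\iota$, I would take the restriction to $\mathcal{O}_0$ to be the identity and use the stated formulas on generators. Differential compatibility on $i_0 x$ and $i_1 x$ is immediate from the fact that $i_0, i_1, j_0, j_1$ are DG-operad maps together with the defining formulas.

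The essential calculation is on $\sigma x$, where $d\sigma x = i_0 x - i_1 x - \sigma d^1 x$ by Proposition \ref{cyllin}. Applying $d$ to $\nu \sigma x = j_0 \sigma x + j_1 \sigma x$ yields
\[
j_0 i_0 x - j_0 i_1 x - j_0 \sigma d^1 x + j_1 i_0 x - j_1 i_1 x - j_1 \sigma d^1 x,
\]
and the pasting relation $j_0 i_1 = j_1 i_0$ cancels the cross terms, leaving $j_0 i_0 x - j_1 i_1 x - j_0 \sigma d^1 x - j_1 \sigma d^1 x$. Since $\nu d\sigma x = j_0 i_0 x - j_1 i_1 x - \nu \sigma d^1 x$, agreement reduces to $\nu \sigma d^1 x = j_0 \sigma d^1 x + j_1 \sigma d^1 x$. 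This follows by writing $d^1 x \in \langle V \rangle_{\mathcal{O}_0}$ as a sum of operadic compositions each involving a single generator $y \in V$, then propagating $\nu$ through: on $\mathcal{O}_0$-labels $\nu$ agrees with $j_0$ and with $j_1$, and on $\sigma y$ it equals $j_0 + j_1$, so the whole expression splits as the sum of its $j_0$-image and its $j_1$-image by operad-map-ness of $j_0, j_1$. For $\iota$, $d\iota\sigma x = -d\sigma x = -i_0 x + i_1 x + \sigma d^1 x$ while $\iota d\sigma x = i_1 x - i_0 x - \iota \sigma d^1 x$, and agreement reduces to $\iota \sigma d^1 x = -\sigma d^1 x$, which follows from the same propagation argument using $\iota|_{\mathcal{O}_0} = \mathrm{id}$ and $\iota(\sigma y) = -\sigma y$.

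The projection compatibilities $p \circ \nu = p$ and $p \circ \iota = p$, together with the inclusion conditions of Definition \ref{doubrev}, are then immediate from the formulas, using $p\sigma = 0$, $pj_0 = pj_1 = p$, and $pi_0 = pi_1 = 1_{\mathcal{O}}$. The main content, modest as it is, lies in the $\sigma x$ verification for $\nu$, where the pasting identity $j_0 i_1 = j_1 i_0$ is essentially used to cancel cross terms; this is precisely what forces the symmetric formula $\nu \sigma x = j_0 \sigma x + j_1 \sigma x$ rather than either summand alone.
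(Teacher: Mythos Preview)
Your proof is correct and follows essentially the same approach as the paper: the key step is the differential check on $\sigma x$, where the pasting relation $j_0 i_1 = j_1 i_0$ cancels the cross terms, and one then needs that the formula $\nu(\sigma z) = j_0\sigma z + j_1\sigma z$ (and similarly $\iota(\sigma z) = -\sigma z$) holds for all $z \in \langle V\rangle_{\mathcal O_0}$, not just $z \in V$. The paper handles this last point in one line by observing that $j_0, j_1$ are maps relative to $\mathcal O_0$; your propagation argument through operadic compositions spells out exactly the same fact.
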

	
	\begin{proof}
		The conditions for $\nu$ to be a doubling map are $(p,p)\nu=p$, $\nu i_0=j_0i_0$, and $\nu i_1=j_1i_1$. The only part about $\nu$ which is not completely trivial is compatibility with differentials in the third case. In order to check this, note that the third formula is actually true for any $x\in\langle V\rangle_{\mathcal O_0}$, since $i_0, i_1,j_0,j_1$ are maps relative to $\mathcal O_0$. Therefore,
		\begin{align*}
		d\nu(\sigma x)={}&dj_0\sigma x+dj_1\sigma x\\
		={}&j_0d(\sigma x)+j_1d(\sigma x)\\
		={}&j_0i_0(x)-j_0i_1(x)-j_0\sigma d^1(x)\\
		&+j_1i_0(x)-j_1i_1(x)-j_1\sigma d^1(x)\\
		={}&j_0i_0(x)-j_1i_1(x)-j_0\sigma d^1(x)-j_1\sigma d^1(x)\\
		={}&\nu i_0(x)-\nu i_1(x)-\nu \sigma d^1(x)\\
		={}&\nu d(\sigma x).
		\end{align*}
		One can similarly check that $\iota$ is a reversing map.
	\end{proof}

\providecommand{\bysame}{\leavevmode\hbox to3em{\hrulefill}\thinspace}
\providecommand{\MR}{\relax\ifhmode\unskip\space\fi MR }
% \MRhref is called by the amsart/book/proc definition of \MR.
\providecommand{\MRhref}[2]{%
	\href{http://www.ams.org/mathscinet-getitem?mr=#1}{#2}
}
\providecommand{\href}[2]{#2}


\begin{thebibliography}{MV09b}
	
	\bibitem[Bau89]{ah}
	H.-J. Baues, \emph{{Algebraic {H}omotopy}}, Cambridge University Press, 1989.
	
	\bibitem[BJT97]{cmmc}
	H.-J. Baues, M.~Jibladze, and A.~Tonks, \emph{{Cohomology of monoids in
			monoidal categories}}, {Proceedings of the Renaissance Conferences}
	(Providence, RI) (J.-L. Loday, J.~D. Stasheff, and A.~A. Voronov, eds.),
	{Contemp. Math.}, vol. 202, Amer. Math. Soc., 1997, pp.~137--165.
	
	\bibitem[Bro65]{tezt}
	R.~Brown, \emph{The twisted {E}ilenberg-{Z}ilber theorem}, Simposio di
	{T}opologia ({M}essina, 1964), Edizioni Oderisi, Gubbio, 1965, pp.~33--37.
	
	\bibitem[Fre09]{occcomaoo}
	Benoit Fresse, \emph{{Operadic cobar constructions, cylinder objects and
			homotopy morphisms of algebras over operads}}, {Alpine perspectives on
		algebraic topology}, {Contemp. Math.}, vol. 504, Amer. Math. Soc.,
	Providence, RI, 2009, pp.~125--188.
	
	\bibitem[HM12]{ckdt}
	J.~Hirsh and J.~Mill{\`e}s, \emph{Curved {K}oszul duality theory}, Math. Ann.
	\textbf{354} (2012), no.~4, 1465--1520.
	
	\bibitem[Hov99]{hmc}
	M.~Hovey, \emph{{Model categories}}, {Mathematical Surveys and Monographs},
	vol.~63, American Mathematical Society, Providence, RI, 1999.
	
	\bibitem[Lam93]{rwsobc}
	L.~A. Lambe, \emph{{Resolutions which split off of the bar construction}}, J.
	Pure Appl. Algebra \textbf{84} (1993), no.~3, 311--329.
	
	\bibitem[LH03]{slaic}
	K.~Lef{\`e}vre-Hasegawa, \emph{{Sur les ${A}_\infty$-cat{\'e}gories}}, Ph.D.
	thesis, Universit{\'e} Paris 7, 2003, \texttt{arXiv:math/0310337v1
		[math.CT]}.
	
	\bibitem[Lod10]{oaad}
	J.-L. Loday, \emph{On the operad of associative algebras with derivation},
	Georgian Math. J. \textbf{17} (2010), no.~2, 347--372.
	
	\bibitem[Lyu11]{huainfa}
	V.~Lyubashenko, \emph{{Homotopy unital {$A_\infty$}-algebras}}, J. Algebra
	\textbf{329} (2011), 190--212.
	
	\bibitem[Mar96]{mfo}
	M.~Markl, \emph{{Models for operads}}, Comm. Algebra \textbf{24} (1996), no.~4,
	1471--1500.
	
	\bibitem[MT14]{uass}
	F.~Muro and A.~Tonks, \emph{{Unital associahedra}}, Forum Math. \textbf{26}
	(2014), no.~2, 593--620.
	
	\bibitem[Mur11]{htnso}
	F.~Muro, \emph{{Homotopy theory of nonsymmetric operads}}, Algebr. Geom. Topol.
	\textbf{11} (2011), 1541--1599.
	
	\bibitem[Mur14]{udga}
	\bysame, \emph{{Homotopy units in ${A}$-infinity algebras}},
	\texttt{arXiv:1111.2723 [math.AT]}, to appear in Trans. Amer. Math. Soc.,
	2014.
	
	\bibitem[MV09a]{dtrpI}
	S.~Merkulov and B.~Vallette, \emph{{Deformation theory of representations of
			prop(erad)s.~{I}}}, J. Reine Angew. Math. \textbf{634} (2009), 51--106.
	
	\bibitem[MV09b]{dtrpII}
	\bysame, \emph{{Deformation theory of representations of prop(erad)s.~{II}}},
	J. Reine Angew. Math. \textbf{636} (2009), 123--174.
	
\end{thebibliography}
\end{document}